\title{Classification of Module Categories for $SO(3)_{2m}$}
\author{
{\sc David E.\ Evans and Mathew Pugh}\\
 {\footnotesize School of Mathematics, Cardiff University,}\\  {\footnotesize Senghennydd Road, Cardiff CF24 4AG, Wales, U.K.}
}
\date{\today}

\documentclass[12pt]{article}

\usepackage{amsmath,amssymb,amsthm}
\usepackage{graphicx}
\usepackage{framed}
\usepackage{color}
\usepackage[usenames,dvipsnames]{xcolor}
\usepackage{multirow}
\usepackage{cancel}
\usepackage{array}
\usepackage{arydshln}
\usepackage[all]{xy}
\usepackage{url}
\usepackage[normalem]{ulem}

\textwidth=160mm \textheight=235mm \topmargin=0.1mm \headsep=-8mm \oddsidemargin=3mm

\theoremstyle{definition}
\newtheorem{Def}{Definition}[section]
\newtheorem{Prop}[Def]{Proposition}
\newtheorem{Lemma}[Def]{Lemma}

\newtheorem{Thm}[Def]{Theorem}
\newtheorem{Rem}[Def]{Remark}

\newcommand\bbR{\mathbb{R}}
\newcommand\bbC{\mathbb{C}}
\newcommand\bbZ{\mathbb{Z}}
\newcommand\bbN{\mathbb{N}}
\newcommand\bbT{\mathbb{T}}
\newcommand\ben{\begin{enumerate}}
\newcommand\een{\end{enumerate}}
\newcommand\disp{\displaystyle}

\begin{document}
\maketitle

\begin{abstract}
The main goal of this paper is to classify $\ast$-module categories for the $SO(3)_{2m}$ modular tensor category. This is done by classifying $SO(3)_{2m}$ nimrep graphs and cell systems, and in the process we also classify the $SO(3)$ modular invariants. There are module categories of type $\mathcal{A}$, $\mathcal{E}$ and their conjugates, but there are no orbifold (or type $\mathcal{D}$) module categories.
We present a construction of a subfactor with principal graph given by the fusion rules of the fundamental generator of the $SO(3)_{2m}$ modular category. We also introduce a Frobenius algebra $A$ which is an $SO(3)$ generalisation of (higher) preprojective algebras, and derive a finite resolution of $A$ as a left $A$-module along with its Hilbert series.
\end{abstract}

\bigskip

{\footnotesize
\tableofcontents
}

\section{Introduction} \label{sect:intro}

The Verlinde ring in conformal field theory can be described by a modular tensor category either in the language of the representation theory of a conformal net or of a vertex operator algebra, particularly in the case of Wess-Zumini-Witten models associated to the positive energy representations of the loop group of a compact Lie group. The full conformal field theory is described by a gluing of a left and right Verlinde ring, the most basic part of which is encoded in a modular invariant partition function, a matrix of non-negative integers which is invariant under the action of the modular group.
Not all such modular invariants arise from a conformal field theory -- the physical ones can be described in the conformal net picture by braided subfactors or module categories over the Verlinde ring. It is therefore of interest to understand module categories over a modular tensor category, and which modular invariants they correspond to.

Here we look at the question of classifying module categories for the modular tensor category corresponding to the non-simply connected, compact Lie group $SO(3)$ at level $2m$. The question could also be phrased in terms of quantum subgroups, since the module categories over the tensor category of the representation of a compact group is described by subgroups, together with an element of two-cohomology of the subgroup (see e.g. \cite[$\S$7.12]{etingof/gelaki/nikshych/ostrik:2015}). The corresponding questions have been addressed for $SU(2)$ and $SU(3)$ (see references below), and the doubles of finite groups \cite{ostrik:2003ii}.

By $\alpha$-induction, any module category for the modular tensor category $\mathcal{C}^m$ yields a modular invariant partition function $Z_{\lambda,\mu} = \langle \alpha_{\lambda}^+, \alpha_{\mu}^- \rangle$ \cite{bockenhauer/evans/kawahigashi:1999, bockenhauer/evans:2000, evans:2003, fuchs/runkel/schweigert:2002}.
A nimrep (non-negative matrix integer representation of the fusion rules) is an assignment of a matrix $G_{\lambda}$ for each simple object $\lambda$ in the modular tensor category for $SO(3)_{2m}$, with non-negative integer entries, which satisfies the fusion rules of $SO(3)_{2m}$, i.e. $G_{\lambda} G_{\mu} = \sum_{\nu} N_{\lambda \mu}^{\nu} G_{\nu}$.
By a standard argument (see e.g. \cite[p.425]{evans/kawahigashi:1998}), the $G_{\lambda}$'s can be simultaneously diagonalised. The eigenvalues of $G_{\lambda}$ given by $S_{\mu \lambda}/S_{\mu 0}$ for $\mu$ running in some multi-set (possibly with multiplicities) which is described by the diagonal part $Z_{\mu,\mu}$ of the modular invariant associated with the module category, where the $S$-matrix is one of the generators of the modular group and $0$ here denotes the object corresponding to the vacuum.
Our first step is therefore to classify modular invariants, which we require to be normalised, i.e. $Z_{00}=1$, and compatible nimrep graphs. This was done for $SU(2)$ in \cite{cappelli/itzykson/zuber:1987ii}, and for $SU(3)$ in \cite{gannon:1994, di_francesco/zuber:1990, ocneanu:2002}.

Just as the fusion rules alone are not enough to determine a fusion category but there are additional structural constants, the nimrep is not enough to determine a module category, or even to show its existence. The other ingredient is a system of Ocneanu cells, which define $6j$-symbols, (self-)connections or Boltzmann weights without spectral parameter. A cell system consists of a system of complex numbers satisfying certain local equations of cohomological nature \cite{ocneanu:2002}. These equations can be derived from diagrammatic axiomatisations of the representation theory of the Lie group, which for $SU(2)$ is due to Kauffman \cite{kauffman:1987}, and the cells are simply the entries of the Perron-Frobenius eigenvector. Thus for $SU(2)$ the module categories are classified by their nimreps, the $ADE$ Dynkin diagrams (see also \cite{kirillov/ostrik:2002, ostrik:2003, etingof/ostrik:2004}). For $SU(3)$ the equations follow from Kuperberg's $A_2$ spider \cite{kuperberg:1996}, the existence of cell systems for the $SU(3)$ nimreps was claimed by Ocneanu \cite{ocneanu:2000ii}, and they were explicitly computed in \cite{evans/pugh:2009i}.

A related question is whether any modular invariant $Z_{\mathcal{G}}$ with associated nimrep $\mathcal{G}$ can be realised by a braided subfactor $N \subset M$, where $N$, $M$ are hyperfinite type $\mathrm{III}$ factors and the Verlinde algebra is realised by systems of endomorphisms ${}_N \mathcal{X}_N$ of $N$, and the action of the system ${}_N \mathcal{X}_N$ on the $N$-$M$ sectors ${}_N \mathcal{X}_M$ gives the nimrep $\mathcal{G}$. For $SU(2)$ this was answered in \cite{ocneanu:2000ii, ocneanu:2002, xu:1998, bockenhauer/evans:1999i, bockenhauer/evans:1999ii, bockenhauer/evans/kawahigashi:1999, bockenhauer/evans/kawahigashi:2000}, for $SU(3)$ Ocneanu claimed \cite{ocneanu:2000ii, ocneanu:2002} that all $SU(3)$ modular invariants were realised by subfactors and this was shown in \cite{xu:1998, bockenhauer/evans:1999i, bockenhauer/evans:1999ii, bockenhauer/evans/kawahigashi:1999, bockenhauer/evans:2001, bockenhauer/evans:2002, evans/pugh:2009i, evans/pugh:2009ii}.

By the work of Toledano-Laredo \cite{toledano_laredo:1999} (see also \cite{carey/wang:2008, freed/hopkins/teleman:2008, braun/schafer-nameki:2008}) the loop group of $SO(3)=SU(2)/\bbZ_2$ is defined only at even levels of $SU(2)$ and is additionally characterised by a character $\chi \in \mathrm{Hom}(\bbZ_2,\bbT) \cong \bbZ_2$. For $LSU(2)$ level $2k$, we denote the corresponding level of $LSO(3)$ by $k$.
For $(k,\chi) = (2m,+1)$, the irreducible positive energy projective representations $\lambda_j$ of $LSO(3)$ are given as $LSU(2)$-modules by
\begin{equation} \label{eqn:branching_coeffs}
\lambda_0 := \lambda^{(0)} \oplus \lambda^{(4m)}, \;\; \lambda_1 := \lambda^{(2)} \oplus \lambda^{(4m-2)}, \ldots \;\; \lambda_{m-1} := \lambda^{(2m-2)} \oplus \lambda^{(2m+2)}, \;\; \lambda_{m}^{\pm} := \lambda^{(2m)},
\end{equation}
where $\lambda^{(l)}$, $l=0,1,2,\dots,2k$, denote all the irreducible positive energy representations of $LSU(2)$ at level $2k$, and $\lambda_{m}^{\pm}$ are both equal to $\lambda_{2m}$ as $LSU(2)$-modules with the group of discontinuous loops corresponding to $\bbZ_2$ acting via the the character $\pm 1$.

The Verlinde ring of positive energy representations of $LSO(3)$ admits a fusion product only for even levels $2m$ ($LSU(2)$ level $4m$) and $\chi = +1$ \cite{carey/wang:2008, freed/hopkins/teleman:2008}.
The positive energy representations in $LSO(3)$ at level $2m$ have fusion rules $\lambda \mu = \bigoplus_{\nu} N_{\lambda \mu}^{\nu} \nu$, where for $\lambda = \rho$ corresponding to the fundamental generator of $LSO(3)$ the fusion matrix $N_{\rho} = [N_{\rho \mu}^{\nu}]_{\mu,\nu}$ is the adjacency matrix for the even part of $D_{2m+2}$ (see the diagram on the left in Figure \ref{fig-Dk+2_fusion_graph-f2}). These figures are also in \cite{baver/gepner:1996}, along with the $S$-matrix of the corresponding conformal field theory.

Existence of a modular category with the $SO(3)$ fusion rules is known by $\alpha$-induction \cite[$\S$3]{bockenhauer/evans:1999i}. Equation \eqref{eqn:branching_coeffs} gives the branching coefficients $b_{\lambda_j,\lambda^{(l)}}$ of the $SO(3)_{2m}$ representation $\lambda_j$ into $SU(2)_{4m}$ representations $\lambda^{(l)}$, i.e. $\lambda_j = \bigoplus_l b_{\lambda_j,\lambda^{(l)}} \lambda^{(l)}$. The branching coefficient matrix intertwines the modular data for the two theories \cite{bockenhauer/evans:2000}:
\begin{equation} \label{eqn:B-ST}
bS^{SO(3)} = S^{SU(2)}b, \qquad bT^{SO(3)} = T^{SU(2)}b,
\end{equation}
where $S^G$, $T^G$ denote the modular $S$, $T$-matrices for $G=SU(2),SO(3)$. Equations \eqref{eqn:B-ST} 
and the requirement that $S^G$, $T^G$ are unitary are sufficient to uniquely determine $T^{SO(3)}$, and to uniquely determine $S^{SO(3)}$ apart from $S^{SO(3)}_{\lambda,\mu}$ for $\lambda,\mu \in \{ \lambda_m^{\pm} \}$.
The identity
$$S^{SO(3)}_{\lambda\mu} = \overline{T^{SO(3)}_{0,0}} T^{SO(3)}_{\lambda,\lambda} T^{SO(3)}_{\mu,\mu} \sum_{\nu} N_{\lambda \mu}^{\nu} T^{SO(3)}_{\nu,\nu} \overline{S^{SO(3)}_{\nu,0}},$$
which is derived from the Verlinde formula and $(S^{SO(3)} T^{SO(3)})^3 = (S^{SO(3)})^2$, then uniquely determines the remaining entries of $S^{SO(3)}$.
The above procedure is known as a fixed point resolution for simple currents \cite{schellekens/yankielowicz:1989, schellekens/yankielowicz:1990, fuchs/schellekens/schweigert:1996}.
Note that $S^2 = I$ for $m$ even, but $S^2=C \neq I$ for $m$ odd, c.f. \cite[$\S$3.5]{izumi:1991}.

This paper is organised as follows. We present a construction of the modular tensor category for $SO(3)_{2m}$ in Section \ref{sect:construct-cat}, which was first presented in \cite{pugh:2017}.
This category has also recently appeared in \cite{edie-michell:2017} without the modular structure, where it is called the adjoint subcategory $\text{Ad}(D_{2m+2})$ (note that in \cite{edie-michell:2017} there is a scaling of the trivalent vertex by $\sqrt{[4]_q}/[2]_q$ compared to that used here, where $[m]_q$ is the quantum integer $[m]_q := (q^m - q^{-m})/(q - q^{-1})$). 
Note that our $SO(3)$ modular tensor category is not the same as the category called $SO(3)_q$ in \cite{morrison/peters/snyder:2017}, which is the even subcategory of the Temperley-Lieb category, and which for $q$ is a primitive $2N+2$-th root of unity is the adjoint subcategory $\text{Ad}(A_{N})$ of \cite{edie-michell:2017}.

In Section \ref{sect:module_cat} we discuss module categories for the $SO(3)_{2m}$ modular tensor category, which are classified by a nimrep graph and cell system. The possible nimrep graphs are classified in Section \ref{sect:mod-inv-nimreps}, along with the $SO(3)$ modular invariants. Cell systems are classified in Section \ref{sect:cell-systems}. The classification of $\ast$-module categories is given in Theorem \ref{Thm:classification} at the end of Section \ref{sect:T-cell-system}. 
In \cite{edie-michell:2017} the Brauer-Picard group, that is, its group of Morita auto-equivalences, of the $SO(3)_{2m}$ (or $\text{Ad}(D_{2m+2})$) modular tensor category was shown to be $\bbZ_2^2$, except for $m=4$ where it is $S_3^2$ . Our classification of module categories agrees with the classification in \cite{edie-michell:2017} of module categories which extend to invertible bimodules, except at level 14, where we find two additional module categories which not extend to invertible bimodules.

n the final two sections we apply the theory and results from the main part of this paper to other mathematical constructions, including the $SO(3)$-Temperley-Lieb algebra and preprojective algebras. More specifically, in Section \ref{sect:realisation_MI_subfactors} we relate the $SO(3)$ diagrammatic algebra with the $SO(3)$-analogue of the Temperley-Lieb algebra from \cite{lehrer/zhang:2010, lehrer/zhang:2008}. We correct a claim in \cite{fendley/krushkal:2010} about the surjectivity of a certain homomorphism from the $BMW$ algebra to the $SO(3)$-Temperley-Lieb algebra and disprove a conjecture about the injectivitiy of this homomorphism.
We also present a construction of a subfactor with principal graph the bipartite unfolding of $\mathcal{A}_{2m}$, recovering a subfactor presented in \cite{izumi:1991}.

Finally in Section \ref{sect:SO3_preprojective} we introduce a Frobenius algebra $A$ which is an $SO(3)$ generalisation of 
preprojective algebras.
Preprojective algebras \cite{gelfand/ponomarev:1979} play an important role in representation theory, for example the moduli stack of representations of the preprojective algebra is the cotangent bundle of the moduli stack of representations of the quiver associated to the preprojective algebra. In \cite{evans/pugh:2010ii} it was argued that preprojective algebras are a construction related to $SU(2)$, and this construction was generalised to $SU(3)$. When these generalised preprojective algebras are built from braided subfactors associated to $SU(3)$ modular invariants, it was shown in this work that they are finite-dimensional Frobenius algebras. These algebras were the first examples of higher rank analogues of preprojective algebras, which have since been heavily studied following the work of Iyama and Oppermann \cite{iyama/oppermann:2013}.
We derive a finite resolution of $A$ as a left $A$-module, and its Hilbert series.

\paragraph{\it Acknowledgements.}

The authors thank the Isaac Newton Institute for Mathematical Sciences, Cambridge during the programme Operator Algebras: Subfactors and their applications, for generous hospitality while researching this
paper. The authors also thank the referees for their helpful comments on earlier drafts.
Both authors' research was supported in part by EPSRC grant nos EP/K032208/1 and EP/N022432/1.

\section{Diagrammatic calculus for $SO(3)_{2m}$} \label{sect:construct-cat}

\subsection{The Temperley-Lieb category} \label{sect:TL-cat}

We first describe the Verlinde algebra and fusion rules for $SU(2)$ in the diagrammatic and categorical language of the Temperley-Lieb algebra \cite{kauffman:1987, turaev:1994, yamagami:2003, cooper:2007}.

Let $q$ be real or a primitive root of unity, so that $\delta = [2]_q$ is real. Denote by $\mathcal{T}_{l,n}$ the set of all planar diagrams consisting of a rectangle with $l$, $n$ vertices along the top, bottom edge respectively, and with $(l+n)/2$ curves, called strings, inside the rectangle so that each vertex is the endpoint of exactly one string, and the strings do not cross each other.
Let $\mathcal{V}_{l,n}$ denote the free vector space over $\mathbb{C}$ with basis $\mathcal{T}_{l,n}$.
Composition $RS$ of diagrams $R \in \mathcal{T}_{l,n}$, $S \in \mathcal{T}_{l,p}$ is given by gluing $S$ vertically below $R$ such that the vertices at the bottom of $R$ and the top of $S$ coincide, removing these vertices, and isotoping the glued strings if necessary to make them smooth. Any closed loops which may appear are removed, contributing a factor of $\delta$. The resulting diagram is in $\mathcal{T}_{l,p}$. This composition is clearly associative, and the product in $\mathcal{V} = \bigcup_{l,n \geq 0} \mathcal{V}_{l,n}$ is defined as its linear extension. The adjoint $R^{\ast} \in \mathcal{T}_{n,l}$ of a diagram in $R \in \mathcal{T}_{l,n}$ is given by reflecting $R$ about a horizontal line halfway between the top and bottom vertices of the diagram. This action is extended conjugate linearly to $\mathcal{V}$, so that $\mathcal{V}$ is a $\ast$-algebra.

The Temperley-Lieb category $TL(\delta)$ is the matrix category $TL(\delta) = \mathrm{Mat}(\mathcal{C}_0)$, where $\mathcal{C}_0$ is the tensor category whose objects are (self adjoint) projections in $\mathcal{V}_n := \mathcal{V}_{n,n}$, and whose morphisms $\mathrm{Hom}(p_1,p_2)$ between projections $p_i \in \mathcal{V}_{n_i}$, $i=1,2$, are given by the space $p_2 \mathcal{V}_{n_2,n_1} p_1$. We will typically use fraktur script to denote morphisms.
The tensor product is defined on the objects and morphisms by horizontal juxtaposition. The trivial object $\mathrm{id}_0$ is the empty diagram which is a projection in $\mathcal{V}_0$. (The category $\mathcal{C}_0$ is the idempotent completion, or Karoubi envelope, of the category whose objects are non-negative integers, and whose morphisms are given by $\mathcal{V}_{l,n}$.)
Then the matrix category $\mathrm{Mat}(\mathcal{C}_0)$ is the category with objects are given by formal direct sums of objects in $\mathcal{C}_0$, and morphisms $\mathrm{Hom}(p_1 \oplus \cdots \oplus p_{n_1}, q_1 \oplus \cdots \oplus q_{n_2})$ given by $n_2 \times n_1$ matrices, where the $i,j$-th entry is in $\mathrm{Hom}(p_j,q_i)$. The tensor product on $TL(\delta)$ is given on objects by $(p_1 \oplus \cdots \oplus p_{n_1}) \otimes (q_1 \oplus \cdots \oplus q_{n_2}) = (p_1 \otimes q_1) \oplus (p_1 \otimes q_2) \oplus \cdots \oplus (p_{n_1} \otimes q_{n_2})$, and on morphisms by the usual tensor product on matrices with the tensor product for $\mathcal{C}_0$ on matrix entries.
We write $TL(\delta)_n := \mathcal{V}_n$.

We define a trace on $\mathcal{T}_{n,n}$, a map $\mathcal{T}_{n,n} \to \bbC$ such that $\text{trace}(ab) = \text{trace}(ba)$, by attaching a string joining the $j^{\mathrm{th}}$ vertex along the top with the $j^{\mathrm{th}}$ vertex along the bottom, for each $j\in\{1,2,\ldots,n\}$. This will yield a collection of closed loops, each of which yields a factor of $\delta$. The trace is normalised by multiplying by a factor of $\delta^{-n}$. This trace is extended linearly to $\mathcal{V}_n$.

We call a projection $p \in TL$ simple if $\textrm{Hom}(p,p) \cong \bbC$.
In the generic case, $\delta \geq 2$, the Temperley-Lieb category $TL$ is semisimple, that is, every projection is a direct sum of simple projections, and for any pair of non-isomorphic simple projections $p_1$, $p_2$ we have $\langle p_1,p_2 \rangle = 0$. The Jones-Wenzl projection $f^{(j)}$,
$j = 0,1,2,\ldots$, is the largest projection for which $f^{(j)} E_i = 0 = E_i f^{(j)}$ for all $i=1,2,\ldots,j$, where $E_i$ is the diagram
\begin{equation} \label{def:E_i}
E_i = \;\; \raisebox{-.45\height}{\includegraphics[width=40mm]{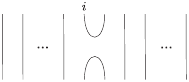}}
\end{equation}
The Jones-Wenzl projections satisfy the recursion relation
\begin{equation}
\raisebox{-.45\height}{\includegraphics[width=18mm]{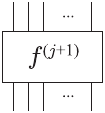}} \;\; = \;\; \raisebox{-.45\height}{\includegraphics[width=18mm]{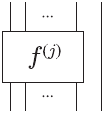}} \;\; - \frac{[j]_q}{[j+1]_q} \;\; \raisebox{-.45\height}{\includegraphics[width=18mm]{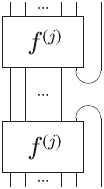}}
\end{equation}
where $[m]_q$ is the quantum integer $[m]_q := (q^m - q^{-m})/(q - q^{-1})$. From the recursion relation we may deduce the identities
\begin{equation} \label{eqn:JW_cap}
\raisebox{-.45\height}{\includegraphics[width=16mm]{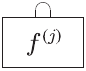}} \;\; = 0
\end{equation}
\begin{equation} \label{eqn:fj_partial_trace}
\raisebox{-.45\height}{\includegraphics[width=18mm]{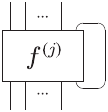}} \;\; = \frac{[j+1]_q}{[j]_q} \;\; \raisebox{-.45\height}{\includegraphics[width=12mm]{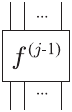}}
\end{equation}

The simple objects in $TL$ are $\rho^{(j)}$, $j = 0,1,2,\ldots$ where $\rho^{(j)}$ is the projection given by the diagram $f^{(j)}$.
Then from \eqref{eqn:fj_partial_trace}, $\mathrm{trace}(\mathfrak{f}^{(j)}) = [j+1]_q$, where $\mathfrak{f}^{(j)}$ is the morphism $\mathrm{id}_{\rho^{(j)}}$.

In the non-generic case, $\delta = [2]_q < 2$, for $q$ a primitive $2k+4^{\mathrm{th}}$ root of unity, we have $\mathrm{trace}(\mathfrak{f}^{(k+1)}) = [k+2]_q = 0$. Thus the negligible morphisms (morphisms $\mathfrak{p}$ for which $\langle \mathfrak{p}, \mathfrak{p} \rangle = 0$) are those in the unique proper tensor ideal in the Temperley-Lieb category generated by $\mathfrak{f}^{(k+1)}$ \cite{goodman/wenzl:2003}. The quotient $TL^{(k)} := TL(\delta)/ \langle \mathfrak{f}^{(k+1)} \rangle$ is semisimple with simple objects $\rho^{(j)}$, $j=0,1,2,\ldots,k$.

There is a notion of under- and over-crossings on diagrams in $TL(\delta)$, $TL^{(k)}$, which is unique up to interchanging $q^{1/2} \leftrightarrow q^{-1/2}$ and up to choice of primitive root $q^{1/2}$, given by
\begin{equation} \label{eqn:braiding-TL}
\raisebox{-.45\height}{\includegraphics[width=12mm]{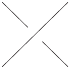}} \;\; = iq^{1/2} \;\; \raisebox{-.45\height}{\includegraphics[width=12mm]{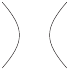}} \;\; -iq^{-1/2} \;\; \raisebox{-.45\height}{\includegraphics[width=12mm]{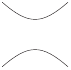}}
\end{equation}
These crossings satisfy the type II and III Reidemeister moves, that is, the inverse of a crossing is given by reflection about a horizontal axis and they satisfy the Yang-Baxter equation.
These crossing diagrams provide additional diagrams to work with in $TL(\delta)$, $TL^{(k)}$.

\subsection{The $D_{2m+2}$ category} \label{sect:D2m+2-cat}

We now restrict to the case $\delta = [2]_q$ for $q$ a primitive $4(2m+1)^{\mathrm{th}}$ root of unity, corresponding to $k=4m$ in the discussion above. We will add an additional generator to the diagrammatic algebra $\mathcal{V}$ to obtain the $D_{2m+2}$ category of \cite{morrison/peters/snyder:2010}.

Let $s$ be a diagram depicted by a box which has $2m$ vertices along both top and bottom.
Denote by $\mathcal{T}^s_{l,n}$ the set of all diagrams consisting of a rectangle with $l$, $n$ vertices along the top, bottom edge respectively, with a finite (possibly zero) number of copies of $s$, and with a finite number of strings inside the rectangle so that each vertex (on the boundary of the rectangle and on the boundaries of any $s$ boxes) is the endpoint of exactly one string, and the strings do not cross each other, i.e. $\mathcal{T}^s_{l,n}$ the set of all planar diagrams with $l$, $n$ vertices along the top, bottom respectively, generated by the $s$ box.
Let $\mathcal{V}^s_{l,n}$ denote the free vector space over $\mathbb{C}$ with basis $\mathcal{T}^s_{l,n}$.
Composition of diagrams in $\mathcal{T}^s_{l,n}$ is as for $\mathcal{T}_{l,n}$, but with the additional relations \cite{morrison/peters/snyder:2010}:
\begin{enumerate}
\item $\raisebox{-.45\height}{\includegraphics[width=15mm]{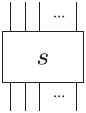}} \;\; = i \;\; \raisebox{-.45\height}{\includegraphics[width=17mm]{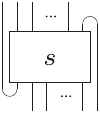}}$
\item $\raisebox{-.45\height}{\includegraphics[width=15mm]{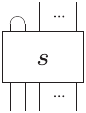}} \;\; =0$
\item $\raisebox{-.45\height}{\includegraphics[width=35mm]{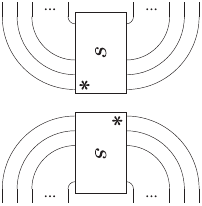}} \;\; = [2m+1]_q f^{(4m)}$
\end{enumerate}
where in the last relation we have inserted $\ast$ to indicate the orientation of the $s$-boxes. Joining the endpoints of the last $2m$ strands along the top in the last relation to the last $2m$ strands along the bottom yields the identity $s^2=f^{(2m)}$ \cite{morrison/peters/snyder:2010}.

Relations 1 and 2 above in fact follow from relation 3. We have the following identity for Jones-Wenzl projections (see e.g. \cite{morrison/peters/snyder:2010})
$$\raisebox{-.45\height}{\includegraphics[width=20mm]{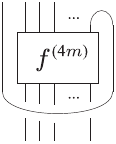}} \;\; = i \;\; \raisebox{-.45\height}{\includegraphics[width=15mm]{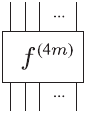}}$$
and composing the diagrams on both sides of this equality with the diagram in Figure \ref{fig-S-capping} we obtain relation 1. Relation 2 follows from capping the diagrams on both sides of relation 3 with a single cap along the top and then composing the diagrams on both sides of the equality with the diagram in Figure \ref{fig-S-capping}. The right hand of the resulting equality is zero due to the cap on $f^{(4m)}$. Thus the only relation needed to define the $s$-box is relation 3.

\begin{figure}
\begin{center}
  \includegraphics[width=35mm]{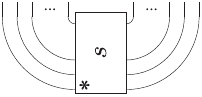}
  \caption{Capping with an $s$-box} \label{fig-S-capping}
\end{center}
\end{figure}

Under- and over-crossings for diagrams in $TL^{(4m)}$ were defined in (\ref{eqn:braiding-TL}). Care is needed in working with these crossing diagrams in $\mathcal{V}^s_{l,n}$, since isotoping a string under an $s$ introduces a factor of $-1$ \cite[Theorem 3.2]{morrison/peters/snyder:2010}:
\begin{equation*}
\raisebox{-.45\height}{\includegraphics[width=18mm]{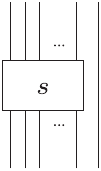}} \;\; = \;\; \raisebox{-.45\height}{\includegraphics[width=20mm]{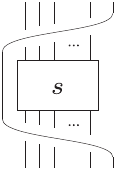}} \hspace{30mm}
\raisebox{-.45\height}{\includegraphics[width=18mm]{fig-SO3_partial_braiding-1}} \;\; = - \;\; \raisebox{-.45\height}{\includegraphics[width=20mm]{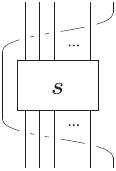}}
\end{equation*}
Thus any diagram is equal to one with at most one $s$-box, since if a diagram has at least two $s$-boxes, one may use the crossings to move any two $s$-boxes adjacent, and after applying relation 1 appropriately the diagram will look locally as in the L.H.S. of relation 3. Thus the two $s$-boxes may be replaced by $[2m+1]_q f^{(4m)}$. This process may be repeated for any remaining pairs of $s$-boxes in the diagram.

We define a trace on $\mathcal{V}^s_{n}:=\mathcal{V}^s_{n,n}$, by attaching strings joining the vertices along the top of diagrams to the vertices along the bottom as for $\mathcal{V}_n$. Applying relations 1-3 we can reduce the number of $s$-boxes to at most one, as described above. If there is a single $s$-box in the resulting diagram, the trace will be 0 by relation 2, whilst if there are no $s$-boxes, the diagram will be a collection of closed loops which each yield a factor of $\delta$.
Then as for $TL^{(4m)}$, $\text{trace}(\mathfrak{f}^{(4m+1)})=[4m+2]_q=0$, and by \cite{morrison/peters/snyder:2010}, the $D_{2m+2}$-category $\widetilde{\mathcal{C}}$ is the matrix category $\widetilde{\mathcal{C}} = \mathrm{Mat}(\mathcal{C}_s)/\langle \mathfrak{f}^{(2k+1)} \rangle$, where $\mathcal{C}_s$ is the tensor category whose objects are (self adjoint) projections in $\mathcal{V}^s_n$ and whose morphisms $\mathrm{Hom}(p_1,p_2)$ between projections $p_i \in \mathcal{V}^s_{n_i}$, $i=1,2$, are given by the space $p_2 \mathcal{V}^s_{n_2,n_1} p_1$.
The category $\widetilde{\mathcal{C}}$ is semisimple \cite{morrison/peters/snyder:2010} with simple objects $\rho^{(j)}$, $j=0,1,2,\ldots,2m-1$, and $P_{\pm}$ which is given by $\frac{1}{2}(f^{(2m)}\pm s)$.
It was shown in \cite{morrison/peters/snyder:2010} that the fusion graph for tensoring with the $SU(2)_{4m}$ generator $f^{(1)}$ is the Dynkin diagram $D_{2m+2}$. The fusion graphs for tensoring with any other simple object in $TL^{(4m)}$ can be deduced from this (see e.g. \cite[$\S$3.5]{izumi:1991}). In particular, the fusion graph for tensoring with $\rho^{(2)}$ is the graph with two connected components illustrated in Figure \ref{fig-Dk+2_fusion_graph-f2}. Izumi showed \cite{izumi:1991} that for $m$ even, $P_+$ and $P_-$ are both self-dual (that is, $P_{\pm} \otimes P_{\pm}$ contains an object isomorphic to the identity $\rho^{(0)}$), whilst for $m$ odd, $P_+$ is dual to $P_-$ (that is, $P_{\pm} \otimes P_{\mp}$ contains an object isomorphic to the identity $\rho^{(0)}$).
From relations 1 and 2 we have $\text{trace}(s)=0$, and therefore $\text{trace}(\mathfrak{P}_{\pm})=[2m+1]_q/2$, where $\mathfrak{P}_{\pm} := \mathrm{id}_{P_{\pm}}$.

\begin{figure}
\begin{center}
  \includegraphics[width=130mm]{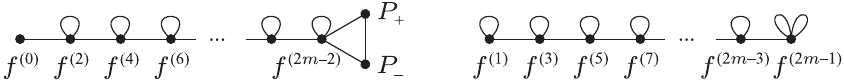}
 \caption{Fusion graph for tensoring with $\rho^{(2)}$ in $TL^{(4m)}$} \label{fig-Dk+2_fusion_graph-f2}
\end{center}
\end{figure}

In $\widetilde{\mathcal{C}}$ the objects $\rho^{(2m+j)}$ corresponding to the Jones-Wenzl projections $f^{(2m+j)}$, $j=1,2,\ldots,2m$ are identified with simple objects of $\widetilde{\mathcal{C}}$ by $\rho^{(2m+j)} \cong \rho^{(2m-j)}$, where the isomorphism  $\phi_j:\rho^{(2m+j)} \to \rho^{(2m-j)}$ is given (up to a scalar factor $[2m-j+1]^{1/2}[4m+1]^{-1/2}$) in Figure \ref{fig-isomorphism-fk-j_fk+j}, where a label next to a strand indicates the number of strands it represents.

\begin{figure}
\begin{center}
  \includegraphics[width=25mm]{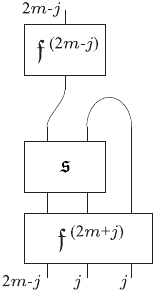}
 \caption{Isomorphism $\rho^{(2m+j)} \cong \rho^{(2m-j)}$, $j=1,2,\ldots,2m$} \label{fig-isomorphism-fk-j_fk+j}
\end{center}
\end{figure}

\subsection{The $SO(3)_{2m}$ category $\mathcal{C}^m$} \label{sect:SO3-cat}

Since the $s$-boxes have an even number $4m$ of vertices on their boundaries, and the diagrams are all planar, we can endow $\mathcal{V}^s_{l,n}$ with a checkerboard shading such that the region which has as its boundary the left-hand side of the outer rectangle is shaded white. The shaded $SO(3)_{2m}$ fundamental generator, which we will also denote by $f^{(2)}$, is the shaded diagram $f^{(2)}$ such that the left-most region of the diagram is shaded white.
Denote by $\mathcal{V}^{(2)}_{l,n}$ the even sub-space of $\mathcal{V}^s_{l,n}$ consisting of all (shaded) planar diagrams generated by the shaded diagram $f^{(2)}$ along with the $4m$-box $s$, that is, the space of tangles generated by $f^{(2)}$ which are consistent with the shading. An equivalent description of these even tangles is that they are given by $(\otimes^l f^{(2)}) \mathcal{V}^s_{2l,2n} (\otimes^n f^{(2)})$, that is, the tangles obtained by composing $\mathcal{V}^s_{2l,2n}$ with $l$ copies of $f^{(2)}$ along the bottom and $n$ copies along the top.
Then the sub-category $\widetilde{\mathcal{C}}^{(2)}$ of $\widetilde{\mathcal{C}}$ is the matrix category $\widetilde{\mathcal{C}}^{(2)} = \mathrm{Mat}(\mathcal{C}_s^{(2)})$, where $\mathcal{C}_s^{(2)}$ is the tensor category whose objects are projections in $\mathcal{V}^{(2)}_n:=\mathcal{V}^{(2)}_{n,n}$, and whose morphisms $\mathrm{Hom}(p_1,p_2)$ between projections $p_i \in \mathcal{V}^{(2)}_{n_i}$, $i=1,2$, are given by the space $p_2 \mathcal{V}^{(2)}_{n_2,n_1} p_1$.
In order to respect the shading, one can no longer isotope a single strand over/under an $s$ box, but rather we must consider pairs of strings.
Isotoping a pair of strings under an $s$ box therefore introduces a factor $(-1)^2 = 1$, and so the crossings (\ref{eqn:braiding-TL}) for diagrams in $\widetilde{\mathcal{C}}$ can be used to pull strings over or under $s$-boxes in $\widetilde{\mathcal{C}}^{(2)}$ with no change of sign.
The sub-category $\widetilde{\mathcal{C}}^{(2)}$ has simple objects $\rho^{(j)}$, $j=0,1,2,\ldots,2m-2$, and $P_{\pm}$.
The fusion graph $\mathcal{A}_{2m}$ for tensoring with $\rho^{(2)}$ is given by the graph on the left hand side in Figure \ref{fig-Dk+2_fusion_graph-f2}, the connected component of $\rho^{(0)}$.

Let $\mathcal{C}^m$ be the category which is isomorphic to $\widetilde{\mathcal{C}}^{(2)}$ obtained under the faithful functor $\varphi: \widetilde{\mathcal{C}}^{(2)} \to \mathcal{C}^m$ which sends $f^{(2)}$ to a single strand, an $s$-box (with $4m$ vertices) to a $t = \varphi(s)$ box with $2m$ vertices, and
$$\varphi: \;\; \raisebox{-.45\height}{\includegraphics[width=15mm]{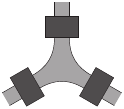}} \;\; \mapsto \frac{\sqrt{[4]_q}}{[2]_q} \;\; \raisebox{-.45\height}{\includegraphics[width=15mm]{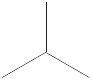}}$$
where the dark boxes denote $f^{(2)}$.
Then $\mathcal{C}^m$ is the category of trivalent graphs and $t$-boxes. Recall that the quantum integer $[m]_q$ is defined as $(q^m-q^{-m})/(q-q^{-1})$, and that here $q$ is a primitive $4(2m+1)^{\mathrm{th}}$ root of unity.
The $SU(2)$ Jones-Wenzl projections $f^{(2j)}$ are mapped to $f_j := \varphi(f^{(2j)})$, $j=0,1,\ldots,2m$, which we call $SO(3)$-Jones-Wenzl projections.
It is easy to verify that under $\varphi$ we have the following relations for diagrams in $\mathcal{C}^m$:
\begin{equation}\label{eqn:SO3-relation1}
\raisebox{-.45\height}{\includegraphics[width=8mm]{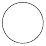}} \;\; = [3]_q, \hspace{30mm} \raisebox{-.45\height}{\includegraphics[width=5mm]{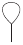}} \;\; = 0
\end{equation}
\begin{equation}\label{eqn:SO3-relation4}
\raisebox{-.45\height}{\includegraphics[width=12mm]{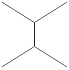}} \;\; - \;\; \raisebox{-.45\height}{\includegraphics[width=12mm]{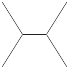}} \;\; = \frac{[2]_q}{[4]_q} \;\; \raisebox{-.45\height}{\includegraphics[width=12mm]{fig-SO3-diagram-1}} \;\; -\frac{[2]_q}{[4]_q} \;\; \raisebox{-.45\height}{\includegraphics[width=12mm]{fig-SO3-diagram-E}}
\end{equation}
and the relations on the $2m$-box $t$ derive from:
\begin{equation}\label{eqn:diagrammatic_relations-T}
\raisebox{-.45\height}{\includegraphics[width=35mm]{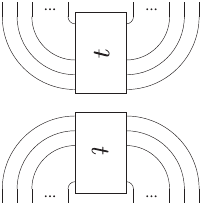}} \;\; = [2m+1]_q f_{2m}
\end{equation}

Just as for the $s$-box in the $D_{2m+2}$-category $\widetilde{\mathcal{C}}$, we can deduce that $t^2 = f_{m}$ and we have the following relations for $t$ (c.f. Section \ref{sect:D2m+2-cat}):
\begin{equation} \label{eqn:T-relations}
\raisebox{-.45\height}{\includegraphics[width=16mm]{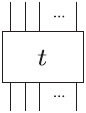}} \;\; = - \;\; \raisebox{-.45\height}{\includegraphics[width=18mm]{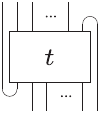}} \hspace{20mm}
\raisebox{-.45\height}{\includegraphics[width=16mm]{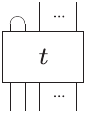}} \;\; = 0 = \;\; \raisebox{-.45\height}{\includegraphics[width=16mm]{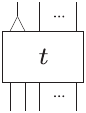}}
\end{equation}
From relations \eqref{eqn:SO3-relation1}-\eqref{eqn:SO3-relation4} we can obtain the relations
$$\raisebox{-.45\height}{\includegraphics[width=5mm]{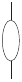}} \;\; = \;\; \raisebox{-.45\height}{\includegraphics[width=1mm]{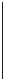}}$$
$$\raisebox{-.45\height}{\includegraphics[width=20mm]{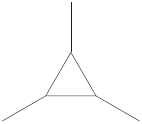}} \;\; = \frac{q^2-1+q^{-2}}{q^2+q^{-2}} \;\; \raisebox{-.45\height}{\includegraphics[width=15mm]{fig-phi-triple_point-2}} \;\; = \frac{[6]_q}{[3]_q[4]_q} \;\; \raisebox{-.45\height}{\includegraphics[width=15mm]{fig-phi-triple_point-2}}$$
\begin{align}
\raisebox{-.45\height}{\includegraphics[width=16mm]{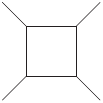}} \;\; &= \frac{q^2-2+q^{-2}}{q^2+q^{-2}} \;\; \raisebox{-.45\height}{\includegraphics[width=10mm]{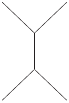}} \;\; + \frac{q^2-1+q^{-2}}{(q^2+q^{-2})^2} \;\; \raisebox{-.45\height}{\includegraphics[width=10mm]{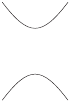}} \;\; + \frac{1}{(q^2+q^{-2})^2} \;\; \raisebox{-.45\height}{\includegraphics[width=10mm]{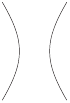}} \label{eqn:SO3-relation6} \\
&= \frac{[2]_q^2[6]_q}{[3]_q[4]_q[2m-2]_q[2m]_q} \;\; \raisebox{-.45\height}{\includegraphics[width=10mm]{fig-SO3-relation6-2}} \;\; + \frac{[2]_q[6]_q}{[3]_q[4]_q^2} \;\; \raisebox{-.45\height}{\includegraphics[width=10mm]{fig-SO3-relation6-3}} \;\; + \frac{[2]_q^2}{[4]_q^2} \;\; \raisebox{-.45\height}{\includegraphics[width=10mm]{fig-SO3-relation6-4}} \nonumber
\end{align}
Similarly, for any diagram which contains an elliptic face, that is, a region which is bounded on all sides by strings, applying relation \eqref{eqn:SO3-relation4} to one of these strings yields a linear combination of diagrams bounded by fewer strings. Iterating this procedure will result in a linear combination of diagrams without any elliptic faces, that is, the intersection of any region of the diagram with the boundary of the diagram is non-empty.

\begin{Lemma}
The $SO(3)$-Jones-Wenzl projections $f_j$, $j=0,1,2,\ldots,2m$ satisfy the following defining properties:
$$\raisebox{-.45\height}{\includegraphics[width=16mm]{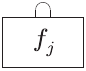}} \;\; = 0 = \;\; \raisebox{-.45\height}{\includegraphics[width=16mm]{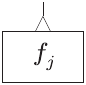}}, \hspace{30mm}
\raisebox{-.45\height}{\includegraphics[width=20mm]{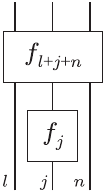}} \;\; = \;\; \raisebox{-.45\height}{\includegraphics[width=18mm]{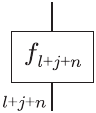}},$$
for any $l,n\geq0$, where the thick lines denote multiple strands, with the number of strands represented ($l,j,n$ respectively) written next to the lines.
\end{Lemma}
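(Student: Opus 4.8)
The plan is to deduce all three identities from the corresponding, already-established properties of the $SU(2)$ Jones-Wenzl projections $f^{(2j)}$, transported through the faithful functor $\varphi$. Recall that $f_j = \varphi(f^{(2j)})$, and that under $\varphi^{-1}$ a single $SO(3)$-strand unfolds into the projection $f^{(2)}$ on a pair of ordinary $SU(2)$-strands, so that $f_j$ corresponds to the Jones-Wenzl projection $f^{(2j)}$ on $2j$ strands. Since $\varphi$ is a faithful tensor functor, an identity between morphisms in $\mathcal{C}^m$ holds if and only if the corresponding identity between their $\varphi^{-1}$-images holds in $\widetilde{\mathcal{C}}^{(2)} \subseteq \widetilde{\mathcal{C}}$, where the full Temperley-Lieb calculus of Section \ref{sect:TL-cat} is available.

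First I would treat the two capping relations. The left cap attaches an $SO(3)$ cup to two adjacent strands on the boundary of $f_j$; pulling this back under $\varphi^{-1}$ turns it into a Temperley-Lieb turn-back, i.e. one of the generators $E_i$ of \eqref{def:E_i}, composed with $f^{(2j)}$. By the characterising property of the Jones-Wenzl projection, $f^{(2j)} E_i = 0 = E_i f^{(2j)}$ (equivalently the capping identity \eqref{eqn:JW_cap}), this vanishes, and faithfulness of $\varphi$ returns the first $SO(3)$ identity. The trivalent cap is handled the same way: by definition of $\varphi$ the trivalent vertex is the image of the $SU(2)$ configuration in which three copies of $f^{(2)}$ meet, carrying the nonzero scalar $\sqrt{[4]_q}/[2]_q$. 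Capping $f_j$ with it and unfolding produces, after resolving the $f^{(2)}$'s, a turn-back $E_i$ acting on two of the $2j$ underlying strands of $f^{(2j)}$; this is again killed by \eqref{eqn:JW_cap}, and the nonzero normalising scalar does not affect the vanishing.

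For the projector/absorption relation I would once more pass to $SU(2)$: the assertion is that the larger projection $f_{l+j+n}$ absorbs a copy of $f_j$ inserted on its middle $j$ strands, together with the idempotency of $f_j$ itself. Under $\varphi^{-1}$ this becomes $f^{(2(l+j+n))}\,(\mathrm{id}_{2l}\otimes f^{(2j)}\otimes \mathrm{id}_{2n}) = f^{(2(l+j+n))}$ and $\big(f^{(2j)}\big)^2 = f^{(2j)}$, both standard consequences of the recursive definition of the Jones-Wenzl projections, since the smaller projection is dominated by the larger one, which annihilates every $E_i$ in the relevant range. Faithfulness of $\varphi$ then yields the $SO(3)$ statement.

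The main obstacle is the trivalent-cap relation: one must match the trivalent vertex with its $SU(2)$ preimage precisely, keeping careful track of the checkerboard shading (so that strands are pulled over/under $s$-boxes only in compatible pairs, as in Section \ref{sect:SO3-cat}) and of the scalar $\sqrt{[4]_q}/[2]_q$, and then verify that the unfolded diagram genuinely contains a Temperley-Lieb turn-back lying within the image of $f^{(2j)}$ rather than some other tangle. Since none of the three relations involves a $t$-box, the $D_{2m+2}$ relations \eqref{eqn:diagrammatic_relations-T} and \eqref{eqn:T-relations} play no role, which keeps the entire reduction inside the ordinary Temperley-Lieb calculus.
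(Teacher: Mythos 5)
Your verification that the $f_j$ satisfy the three relations is correct and is essentially the paper's own argument: pull everything back through $\varphi^{-1}$ into $TL^{(4m)}$, where both the ordinary cap and the trivalent cap against $f_j$ become Temperley-Lieb turn-backs against $f^{(2j)}$ (the trivalent vertex contributing only the nonzero scalar $[2]_q/\sqrt{[4]_q}$ under $\varphi^{-1}$), so both vanish by \eqref{eqn:JW_cap}; and the absorption/projector identity is the image under $\varphi$ of its standard $SU(2)$ counterpart $f^{(2(l+j+n))}\,(\mathrm{id}\otimes f^{(2j)}\otimes \mathrm{id}) = f^{(2(l+j+n))}$.

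There is, however, one genuine omission: the lemma calls these \emph{defining} properties, and that carries a uniqueness claim your proposal never addresses. You show that $f_j$ satisfies the relations, but not that any morphism in the $SO(3)$ diagram algebra satisfying them must equal $f_j$. Faithfulness of $\varphi$ does not give this for free: faithfulness transfers identities between morphisms, but it does not by itself rule out a different element of $\mathcal{W}_j$ obeying the same relations (one would need to know that every candidate lies in the image of the even $SU(2)$ calculus and then run the uniqueness argument there, which is essentially the same work). The paper closes this by appealing to an argument modelled on \cite[Theorem 2.2.1(3)]{suciu:1997}: after reducing to diagrams with no elliptic faces, every non-identity $SO(3)$ diagram has a cap or a trivalent vertex adjacent to the boundary, so if $p$ and $p'$ both satisfy the stated relations, expanding each against the other yields $pp'=p'$ and $pp'=p$, hence $p=p'$. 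Some argument of this kind is needed before the word ``defining'' is justified.
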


\begin{proof}
The first two identities follow easily from \eqref{eqn:JW_cap} in $TL^{(4m)}$:
\begin{align*}
\raisebox{-.45\height}{\includegraphics[width=16mm]{fig-fj_cap-1}} \;\; &\stackrel{\varphi^{-1}}{=} \;\; \raisebox{-.45\height}{\includegraphics[width=16mm]{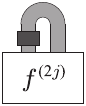}} \;\; = \;\; \raisebox{-.45\height}{\includegraphics[width=16mm]{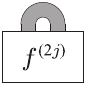}} \;\; = 0, \\
\raisebox{-.45\height}{\includegraphics[width=16mm]{fig-fj_trivalent_cap-1}} \;\; &\stackrel{\varphi^{-1}}{=} \frac{[2]_q}{\sqrt{[4]_q}} \;\; \raisebox{-.45\height}{\includegraphics[width=16mm]{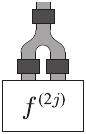}} \;\; = \frac{[2]_q}{\sqrt{[4]_q}} \;\; \raisebox{-.45\height}{\includegraphics[width=16mm]{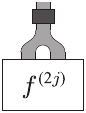}} \;\; = 0
\end{align*}
The projector identity follows from the analogous identity for Jones-Wenzl projections in $TL^{(4m)}$. Finally, that these relations uniquely define the $SO(3)$-Jones-Wenzl projections follows by a similar argument to the proof of \cite[Theorem 2.2.1(3)]{suciu:1997}.
\end{proof}

From a double application of the recursion relation for the Jones-Wenzl projections the even $SU(2)$ Jones-Wenzl projections $f^{(2j)}$, $j=0,1,2,\ldots,2m$, satisfy the recursion relation
\begin{align*}
\raisebox{-.45\height}{\includegraphics[width=16mm]{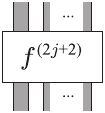}} \;\; &= \;\; \raisebox{-.45\height}{\includegraphics[width=19mm]{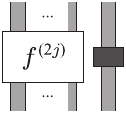}} \;\; - \frac{[2j-1]_q}{[2j+1]_q} \;\; \raisebox{-.45\height}{\includegraphics[width=22mm]{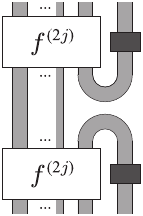}} \;\; - \frac{[2]_q[2j]_q}{[2j+2]_q} \;\; \raisebox{-.45\height}{\includegraphics[width=20mm]{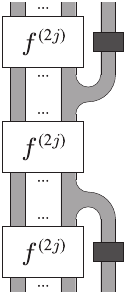}} \\
&= \;\; \raisebox{-.45\height}{\includegraphics[width=19mm]{fig-SU2-evenJW_recursion-2}} \;\; - \frac{[4j]_q}{[2j+1]_q[2j+2]_q} \;\; \raisebox{-.45\height}{\includegraphics[width=22mm]{fig-SU2-evenJW_recursion-3}} \;\; - \frac{[2]_q[2j]_q}{[2j+2]_q} \;\; \raisebox{-.45\height}{\includegraphics[width=22mm]{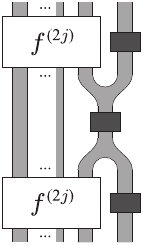}}
\end{align*}
Under $\varphi$ this recursion relations yields the following recursion relation on the $SO(3)$-Jones-Wenzl projections $f_j$, $j=0,1,2,\ldots,2m$:
\begin{align*}
\raisebox{-.45\height}{\includegraphics[width=17mm]{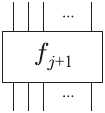}} \;\; &= \;\; \raisebox{-.45\height}{\includegraphics[width=17mm]{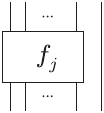}} \;\; - \frac{[2j-1]_q}{[2j+1]_q} \;\; \raisebox{-.45\height}{\includegraphics[width=18mm]{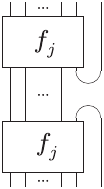}} \;\; - \frac{[4]_q[2j]_q}{[2]_q[2j+2]_q} \;\; \raisebox{-.45\height}{\includegraphics[width=18mm]{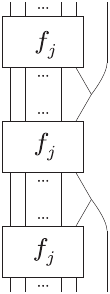}} \\
&= \;\; \raisebox{-.45\height}{\includegraphics[width=17mm]{fig-SO3-JW_recursion-2}} \;\; - \frac{[4j]_q}{[2j+1]_q[2j+2]_q} \;\; \raisebox{-.45\height}{\includegraphics[width=18mm]{fig-SO3-JW_recursion-3}} \;\; - \frac{[4]_q[2j]_q}{[2]_q[2j+2]_q} \;\; \raisebox{-.45\height}{\includegraphics[width=18mm]{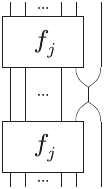}}
\end{align*}

The simple objects of $\mathcal{C}^m$ are $\rho_j$, $j=0,1,2,\ldots,m-1$, given by $f_j = \varphi(f^{(2j)})$, and $Q_{\pm}$ given by $\varphi(P_{\pm}) = \frac{1}{2}(f_{m} \pm t)$.
The trace on $\mathcal{V}^s_{2n}$ restricts to a trace on $\mathcal{V}^{(2)}_n$. Under $\varphi$ this passes to a trace $\text{tr}_T$ on $\mathcal{W}_n=\varphi(\mathcal{V}^{(2)}_n)$, defined on diagrams by attaching a string joining the $j^{\mathrm{th}}$ vertex along the top with the $j^{\mathrm{th}}$ vertex along the bottom, for each $j\in\{1,2,\ldots,n\}$, with each resulting closed loop now contributing a factor of $[3]_q$.
Let $\mathfrak{f}_{j} := \text{id}_{\rho_j}$. We have
\begin{equation} \label{eqn:tr(fj)}
\text{tr}_T(\mathfrak{f}_{j}) = \text{trace}(\mathfrak{f}^{(2j)}) = [2j+1]_q.
\end{equation}
Note that the trace of $\mathfrak{f}_j$ is non-zero for any $j$, however $\text{tr}_T (\mathfrak{f}_{m+j}-\mathfrak{f}_{m-j}) = [2m+2j+1]_q - [2m-2j+1]_q = 0$, consistent with $\rho_{m+j} \cong \rho_{m-j}$ in $\mathcal{C}^m$.

The $SO(3)$-Jones-Wenzl projections satisfy the fusion rules
\begin{equation} \label{eqn:fusion_rules-f1}
\rho_j \otimes \rho_1 \cong \rho_{j-1} \oplus \rho_j \oplus \rho_{j+1},
\end{equation}
for $j=1,2,\ldots,2m-1$, which follows from the fusion graph for tensoring with $\rho^{(2)}$ in $\tilde{\mathcal{C}}$, in Figure \ref{fig-Dk+2_fusion_graph-f2}. For $j=0$ we have the trivial identity $\rho_0 \otimes \rho_1 \cong \rho_1$. We can explicitly construct isomorphisms $\psi: \rho_j \otimes \rho_1 \to \rho_{j-1} \oplus \rho_j \oplus \rho_{j+1}$ and $\psi^{-1} = \psi^{\ast}$ so that $\psi^{\ast} \psi = \mathfrak{f}_j \otimes \mathfrak{f}_1 = \text{id}_{\rho_j} \otimes \text{id}_{\rho_1}$ and $\psi \psi^{\ast} = \mathfrak{f}_{j-1} \oplus \mathfrak{f}_j \oplus \mathfrak{f}_{j+1} = \text{id}_{\rho_{j-1}} \oplus \text{id}_{\rho_{j}} \oplus \text{id}_{\rho_{j+1}}$, by
$$\psi = \left( \begin{array}{c} \psi_1 \\ \psi_2 \\ \psi_3 \end{array} \right),$$
where
\begin{equation}
\psi_1 = \frac{\sqrt{[2m-1]_q}}{\sqrt{[2m+1]_q}} \;\; \raisebox{-.45\height}{\includegraphics[width=17mm]{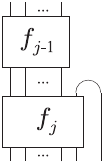}} \, , \quad
\psi_2 = \frac{\sqrt{[4]_q[2m]_q}}{\sqrt{[2]_q[2m+2]_q}} \;\; \raisebox{-.45\height}{\includegraphics[width=18mm]{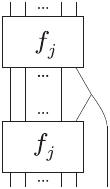}} \, , \quad
\psi_3 = \raisebox{-.45\height}{\includegraphics[width=17mm]{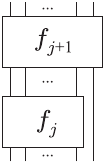}}
\end{equation}
and with $\psi^{\ast}$ defined as the transpose of $\psi$, with each entry $\psi_i$ replaced by its reflection $\psi_i^{\ast}$ about the horizontal axis. When $j=0$, $\psi_1=\psi_2=0$ and $\psi_3$ yields the isomorphism between $\rho_0 \otimes \rho_1$ and $\rho_1$.
The fusion graph for tensoring by the fundamental generator $\rho_1$ is given in Figure \ref{fig-SO3_fusion_graph}.

\begin{figure}
\begin{center}
  \includegraphics[width=60mm]{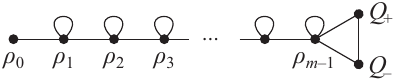}
 \caption{Fusion graph for tensoring with $\rho_1$ in $\mathcal{C}^m$} \label{fig-SO3_fusion_graph}
\end{center}
\end{figure}

\begin{Rem} \label{rem:ast-structure-C}
There is a natural $\ast$-structure on $\mathcal{C}^m$, defined on diagrams by reflecting about a horizontal axis, and extended conjugate linearly to linear combinations of diagrams.
\end{Rem}

\begin{Rem} \label{SO3_q-generic_q}
For generic $q$, the diagrammatic algebra $\mathcal{W}:= \bigcup_n \varphi \left( \mathcal{V}_n^{(2)} \right)$ for $SO(3)_q$ is given by diagrams generated by the trivalent vertex subject to the relations \eqref{eqn:SO3-relation1}-\eqref{eqn:SO3-relation4} -- there is no additional generator $t$ in this case. Thus for generic $q$, $SO(3)_q$ is isomorphic to the chromatic algebra \cite{martin/woodcock:1998, fendley/krushkal:2010}.
\end{Rem}

Over- and under-crossings in $TL^{(4m)}$ transport to the following over- and under-crossings in $\mathcal{C}^m$
\begin{equation} \label{eqn:braiding-SO3}
\raisebox{-.45\height}{\includegraphics[width=12mm]{fig-braiding}} \;\; = q^2 \;\; \raisebox{-.45\height}{\includegraphics[width=12mm]{fig-SO3-diagram-1}} \;\; +(q^{-2}-1) \;\; \raisebox{-.45\height}{\includegraphics[width=12mm]{fig-SO3-diagram-E}} \;\; - (q^2 + q^{-2}) \;\; \raisebox{-.45\height}{\includegraphics[width=12mm]{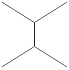}}
\end{equation}
which can be found by mapping the L.H.S. of (\ref{eqn:braiding-SO3}) to $\widetilde{\mathcal{C}}^{(2)}$ using $\varphi^{-1}$, which yields a diagram involving four crossings that can be expanded using relation (\ref{eqn:braiding-TL}) in $\widetilde{\mathcal{C}}^{(2)}$, and finally mapping the resulting expression back into $\mathcal{C}^m$ using $\varphi$.

\begin{Lemma}
Over- and under-crossings in $\mathcal{C}^m$ are uniquely given by (\ref{eqn:braiding-SO3}), up to interchanging $q \leftrightarrow q^{-1}$.
\end{Lemma}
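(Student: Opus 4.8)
The plan is to realise the crossing as an explicit element of the three-dimensional endomorphism algebra $\mathrm{End}(\rho_1\otimes\rho_1)$ and then show that the Reidemeister moves force its coefficients. First I would argue that a crossing, being a $2$-in-$2$-out morphism, reduces by the elliptic-face argument given after \eqref{eqn:SO3-relation6} to a linear combination of planar diagrams with no elliptic face, and that any such diagram containing a $t$-box is zero, since it would force a $t$-box to meet a cap \eqref{eqn:T-relations}. The only survivors are the three diagrams on the right of \eqref{eqn:braiding-SO3}: the identity $\mathrm{id}$, the turn-back $E$, and the trivalent ``hourglass'' $U$. Hence any candidate over-crossing has the form $c=\alpha\,\mathrm{id}+\beta\,E+\gamma\,U$ for scalars $\alpha,\beta,\gamma$.

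Next I would record the multiplication table of these three diagrams. Capping two legs of a trivalent vertex vanishes by \eqref{eqn:SO3-relation1}, so $EU=UE=0$; a closed loop carries the factor $[3]_q$, so $E^2=[3]_q\,E$; and stacking two copies of $U$ creates a bubble which the bubble-removal relation displayed after \eqref{eqn:SO3-relation4} turns into a scalar multiple of $U$, giving $U^2=\mu\,U$ for an explicit $\mu\in\bbC$. Thus $\{\mathrm{id},E,U\}$ spans a commutative algebra isomorphic to $\bbC^3$, with orthogonal idempotents projecting onto the three summands $\rho_0,\rho_1,\rho_2$ of $\rho_1\otimes\rho_1$ coming from \eqref{eqn:fusion_rules-f1}. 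In particular $c$ is simultaneously diagonalised by these idempotents, and is completely determined by its three eigenvalues.

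I would then impose the braiding axioms on the eigenvalues. The type II move states that $c^{-1}$ equals the horizontal reflection of $c$; since $q$ is a root of unity the reflection is the conjugate-linear $\ast$ of Remark \ref{rem:ast-structure-C} with $\overline{q}=q^{-1}$, so the under-crossing is exactly $c$ with $q\mapsto q^{-1}$ and the move reads $c\,c^{\ast}=\mathrm{id}$, forcing each eigenvalue to have modulus one. The framing (Reidemeister-I-type) relation, obtained by closing the crossing against a cup or against the trivalent vertex, together with the Yang--Baxter (type III) move, then identifies the square of each eigenvalue with the corresponding ratio of ribbon twists $\theta_{\rho_\nu}\theta_{\rho_1}^{-2}$, which are fixed by the diagrammatics (equivalently by the $T$-matrix of $\mathcal{C}^m$). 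This pins each eigenvalue down to a sign, and naturality of the braiding against the trivalent vertex $\rho_1\otimes\rho_1\to\rho_1$ together with the type III move fixes the relative signs, leaving exactly two globally coherent solutions, interchanged by $q\leftrightarrow q^{-1}$. Reading off the coefficients $(\alpha,\beta,\gamma)$ recovers \eqref{eqn:braiding-SO3} and its image under $q\mapsto q^{-1}$.

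The step I expect to be the main obstacle is the last one: ruling out spurious sign choices of the eigenvalues and confirming that the Yang--Baxter equation, which genuinely lives in the larger algebra $\mathrm{End}(\rho_1^{\otimes 3})$, admits no further solutions. As an independent check I would transport the problem through $\varphi$: the $SO(3)$ crossing corresponds to the crossing of two $f^{(2)}$-strands in $\widetilde{\mathcal{C}}^{(2)}$, which by naturality is a fixed composite of four elementary Temperley--Lieb crossings \eqref{eqn:braiding-TL}; as the elementary crossing is unique up to $q^{1/2}\leftrightarrow q^{-1/2}$ and the number of crossings is even, the induced crossing in $\mathcal{C}^m$ is unique up to $q\leftrightarrow q^{-1}$, consistent with the leading coefficient $q^2$ in \eqref{eqn:braiding-SO3}.
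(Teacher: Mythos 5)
Your opening two steps agree in substance with the paper's proof: the paper also starts from the observation that the space of $SO(3)$ diagrams with four free endpoints is spanned by the identity, $E$ and $U$, writes the candidate crossing as $c=\alpha\,\mathrm{id}+\beta E+\gamma U$, and reads the type II move through the $\ast$-structure of Remark~\ref{rem:ast-structure-C}, i.e.\ as $cc^{\ast}=\mathrm{id}$. Your multiplication table (with $E^2=[3]_q E$, $U^2=U$ by the bubble relation, $EU=UE=0$) and the resulting identification of $\mathrm{span}\{\mathrm{id},E,U\}$ with $\bbC^3$ via the projections onto $\rho_0,\rho_1,\rho_2$ are correct for $m\geq 3$ (for $m=2$ the $t$-box itself has four legs, a case both your argument and the paper's pass over silently).

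The genuine gap is in how you fix the phases of the three eigenvalues. You propose to identify $\lambda_\nu^2$ with the twist ratio $\theta_{\rho_\nu}\theta_{\rho_1}^{-2}$ and then appeal to these twists being ``fixed by the diagrammatics (equivalently by the $T$-matrix of $\mathcal{C}^m$)''. This is circular: the twists $\omega(\rho_j)=q^{-2j(j+1)}$ and the $T$-matrix are computed in the paper \emph{from} the braiding \eqref{eqn:braiding-SO3}, the very morphism whose uniqueness is at issue. For an arbitrary unitary solution $c$ of the remaining axioms, closing $c$ against a cup is not a constraint but the \emph{definition} of the twist of that solution (the kink value is $[3]_q\alpha+\beta+\gamma$, whatever $(\alpha,\beta,\gamma)$ are), and the identity $\lambda_\nu^2=\theta_{\rho_\nu}(c)\,\theta_{\rho_1}(c)^{-2}$ holds internally for \emph{every} ribbon solution, so it cannot distinguish between them. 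The constraints that actually do the work are exactly the ones you defer to the end and flag as ``the main obstacle'': Yang--Baxter and the compatibility of the crossing with the trivalent vertex. That is precisely where the paper's proof acts: it imposes the displayed relation letting the trivalent vertex slide through the crossing, and this single relation together with the type II move fixes $(\alpha,\beta,\gamma)$ by direct elementary algebra, leaving exactly the two solutions exchanged by $q\leftrightarrow q^{-1}$. Your closing ``independent check'' through $\varphi$ does not close the gap either: it shows that crossings \emph{induced} from $TL^{(4m)}$ are unique up to $q\leftrightarrow q^{-1}$, but not that every crossing in $\mathcal{C}^m$ arises this way (one cannot argue by restricting a braiding of $\widetilde{\mathcal{C}}$, since the $D_{2m+2}$ category is not braided -- isotoping a strand under an $s$-box costs a sign).
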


\begin{proof}
The space of $SO(3)$ diagrams with four free end points is spanned by the diagrams \;\; \raisebox{-.2\height}{\includegraphics[width=8mm]{fig-SO3-diagram-1}} \;, \;\; \raisebox{-.2\height}{\includegraphics[width=8mm]{fig-SO3-diagram-E}} \;\; and \;\; \raisebox{-.2\height}{\includegraphics[width=8mm]{fig-SO3-diagram-U}} \;. Thus a crossing should be given by a linear combination of these diagrams. The type II Reidemeister move and the relation
$$\raisebox{-.45\height}{\includegraphics[width=15mm]{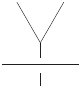}} \;\; = \;\; \raisebox{-.45\height}{\includegraphics[width=15mm]{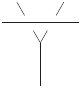}}$$
are sufficient to fix the coefficients in the linear combination.
\end{proof}

\subsection{Modular structure of $\mathcal{C}^m$}

The crossings defined in \eqref{eqn:braiding-SO3} yield a braiding in the $SO(3)_{2m}$ category $\mathcal{C}^m$, thus making it a braided tensor category.
The $SO(3)_{2m}$ category $\mathcal{C}^m$ is in fact a modular tensor category.
Define a matrix $Y$, indexed by the simple objects of $\mathcal{C}^m$, by \cite{rehren:1990}
$$Y_{\lambda,\mu} = \;\; \raisebox{-.45\height}{\includegraphics[width=50mm]{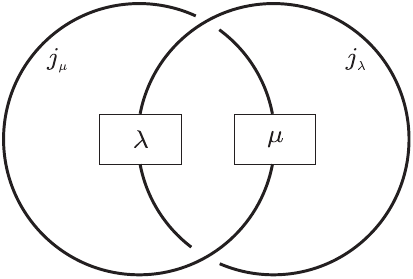}}$$
where the thick lines denote multiple strands, with multiplicity $j_{\lambda}$, $j_{\mu}$ respectively.
For $i,j \in \{ 0,1, \ldots, m-1 \}$, $Y_{\rho_i,\rho_j} = Y_{\rho^{(2i)},\rho^{(2j)}}$, which is given by the corresponding diagram in $TL^{(4m)}$ under $\varphi^{-1}$, thus $Y_{\rho_i,\rho_j} = [(2i+1)(2j+1)]_q$ (see e.g. \cite[XII. $\S$5]{turaev:1994}. Note that the convention used there is that closed loops contribute a factor of $-[2]_q$, thus there is an additional factor of $(-1)^{i+j}$ in $Y_{i,j}$).
Define $Y_{\rho_j,t}$ by the closed diagram
$$\raisebox{-.45\height}{\includegraphics[width=50mm]{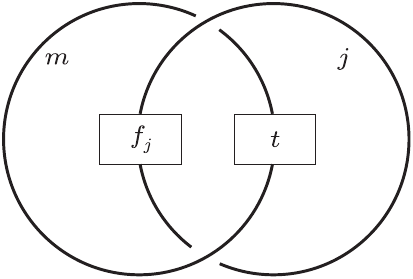}} \;\; = \;\; \raisebox{-.45\height}{\includegraphics[width=50mm]{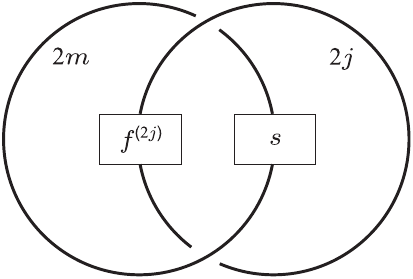}}$$
Note that $Y_{\rho_0,t} = 0$ by \eqref{eqn:T-relations}. We will show that $Y_{\rho_j,t} = 0$ for all $j = 0,1,2,\ldots,m$ by induction.
Using the recursion relation for the Jones-Wenzl projections, the diagram on the R.H.S. is given by the linear combination
$$\raisebox{-.45\height}{\includegraphics[width=50mm]{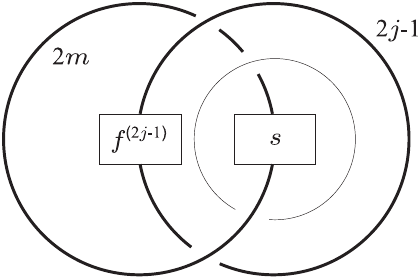}} \;\; - \frac{[2j-1]_q}{[2j]_q} \;\; \raisebox{-.45\height}{\includegraphics[width=50mm]{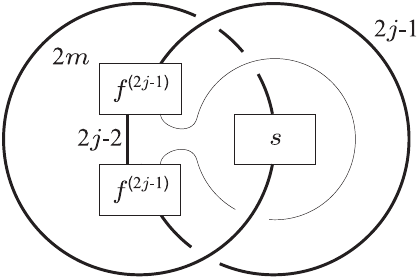}}$$
The first term is zero by induction, whilst the second term yields
\begin{center}
\includegraphics[width=50mm]{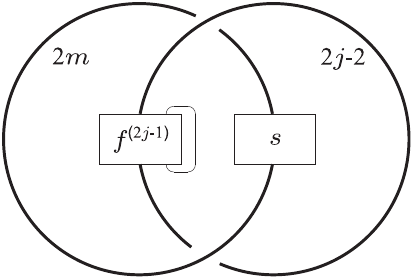}
\end{center}
by sliding the top $f^{(2j-1)}$ box around the $2j-1$ strands on the right side of the diagram. By \eqref{eqn:fj_partial_trace} this term is also zero by induction.
Thus $Y_{\rho_j,Q_{\pm}} = \frac{1}{2} \left( Y_{\rho_j,\rho_m} \pm Y_{\rho_j,t} \right) = \frac{1}{2} Y_{\rho_j,\rho_m} = \frac{1}{2} [(2j+1)(2m+1)]_q$.

We now compute $Y_{Q_{\epsilon_1},Q_{\epsilon_2}}$ for $\epsilon_1,\epsilon_2 \in \{ \pm1 \}$.
Define $Y_{t,t}$ by the closed diagram
$$\raisebox{-.45\height}{\includegraphics[width=50mm]{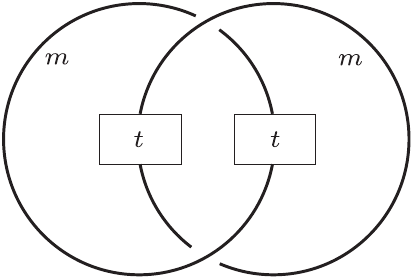}} \;\; = \;\; \raisebox{-.45\height}{\includegraphics[width=50mm]{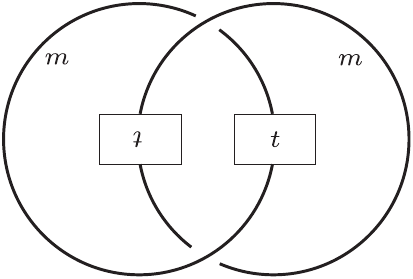}} \;\; = \;\; \raisebox{-.45\height}{\includegraphics[width=25mm]{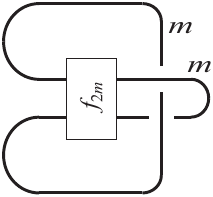}}$$
where the first equality follows from relation \eqref{eqn:T-relations} and the second from \eqref{eqn:diagrammatic_relations-T}. Then by \eqref{eqn:T-relations}, $Y_{t,t}$ is equal to
$$(-1)^m [2m+1]_q \;\; \raisebox{-.45\height}{\includegraphics[width=25mm]{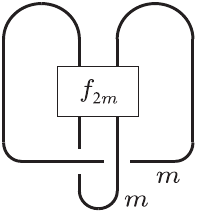}} \;\; \stackrel{\varphi^{-1}}{=} (-1)^m [2m+1]_q \;\; \raisebox{-.45\height}{\includegraphics[width=25mm]{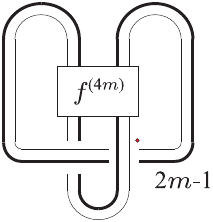}} \;\; =: (-1)^m [2m+1]_q y_{4m},$$
where $y_{4m}$ is a diagram in $TL^{(4m)}$. Expanding the crossing marked by a dot using \eqref{eqn:braiding-TL}, we obtain
\begin{equation} \label{eqn:Y_t,t-1}
y_{4m} = iq^{1/2} \;\; \raisebox{-.45\height}{\includegraphics[width=25mm]{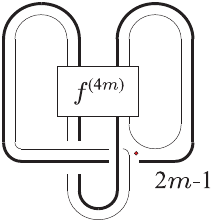}} \;\; -iq^{-1/2} \;\; \raisebox{-.45\height}{\includegraphics[width=25mm]{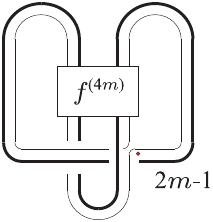}}
\end{equation}
Expanding the crossings marked by a dot, the first term gives
\begin{align*}
(iq^{1/2})^{2m} \;\; \raisebox{-.45\height}{\includegraphics[width=25mm]{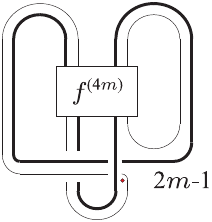}} \;\; &= (iq^{1/2})^{4m-1} \;\; \raisebox{-.45\height}{\includegraphics[width=25mm]{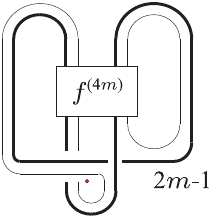}} \;\; = -(iq^{1/2})^{4m+2} \;\; \raisebox{-.45\height}{\includegraphics[width=25mm]{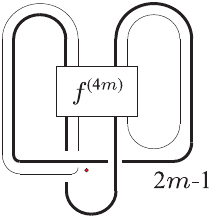}} \\
&= -(iq^{1/2})^{6m+1} \;\; \raisebox{-.45\height}{\includegraphics[width=25mm]{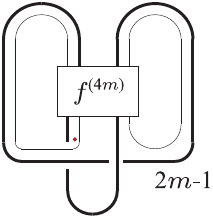}} \;\; = -(iq^{1/2})^{8m} \;\; \raisebox{-.45\height}{\includegraphics[width=25mm]{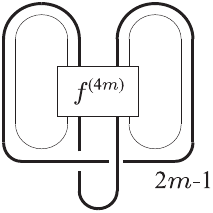}},
\end{align*}
which by \eqref{eqn:fj_partial_trace} is equal to $-q^{4m} [4m+1]_q \, [4m-1]_q^{-1} y_{4m-2}$.
The second term on the R.H.S. of \eqref{eqn:Y_t,t-1} gives
\begin{align*}
(-iq^{-1/2})^{2m} &\;\; \raisebox{-.45\height}{\includegraphics[width=25mm]{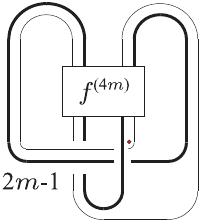}} \;\; = (-iq^{-1/2})^{4m-1} \;\; \raisebox{-.45\height}{\includegraphics[width=25mm]{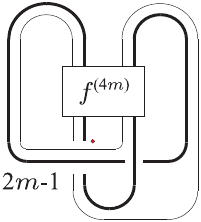}} \\
&= (-iq^{-1/2})^{2m-1} \;\; \raisebox{-.45\height}{\includegraphics[width=25mm]{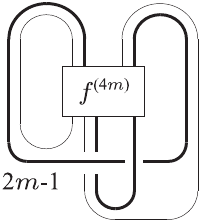}} \;\; = (-iq^{-1/2})^{2m-1} \;\; \raisebox{-.45\height}{\includegraphics[width=25mm]{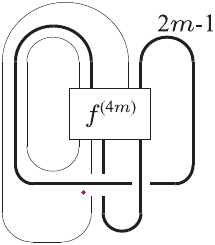}} \\
&= \;\; \raisebox{-.45\height}{\includegraphics[width=25mm]{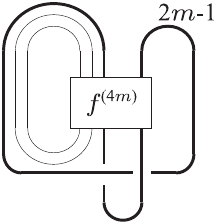}},
\end{align*}
which by \eqref{eqn:fj_partial_trace} is equal to $[4m+1]_q \, [4m-1]_q^{-1} y_{4m-2}$.
Thus we obtain the recursion relation $y_{2j+2} = [2j+1]_q \, [2j-1]_q^{-1} (1-q^{2j}) y_{2j-2}$, which yields
$$y_{4m} = [4m+1]_q \, [1]_q^{-1} y_0 \, \prod_{j=0}^{2m-1} (1-q^{4m-2j}) = \prod_{j=0}^{2m-1} (1-q^{4m-2j}),$$
since $[4m+1]_q=1$ and $y_0$ is the empty diagram.

\begin{Lemma} \label{lemma:prod_1-q}
For $q$ a primitive $(8m+4)$th root of unity, $\prod_{j=0}^{2m-1} (1-q^{4m-2j}) = (-i)^{m} \sqrt{2m+1}$.
\end{Lemma}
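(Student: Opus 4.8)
The plan is to rewrite the product as a truncated cyclotomic quantity, compute its modulus by a pairing argument, and then read off its phase from a half-angle factorisation. First I would reindex the product: as $j$ runs over $0,1,\ldots,2m-1$ the exponent $4m-2j$ runs over the even numbers $2,4,\ldots,4m$, so
$$\prod_{j=0}^{2m-1}(1-q^{4m-2j}) = \prod_{k=1}^{2m}(1-q^{2k}) =: P.$$
I would fix $q = e^{2\pi i/(8m+4)} = e^{i\pi/(4m+2)}$, the primitive root used throughout the paper; the value is genuinely sensitive to this choice, since replacing $q$ by $q^{-1}$ conjugates $P$ and different primitive roots can even alter the sign.

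For the modulus, set $\omega = q^2$, a primitive $(4m+2)$th root of unity. The cyclotomic identity $\prod_{k=1}^{4m+1}(1-\omega^k) = 4m+2$ (the value at $x=1$ of $(x^{4m+2}-1)/(x-1)$) splits by peeling off the central factor $k=2m+1$, where $\omega^{2m+1}=-1$ gives $1-\omega^{2m+1}=2$, and pairing $k \leftrightarrow 4m+2-k$, which sends $\omega^k \mapsto \omega^{-k} = \overline{\omega^k}$. The second half of the product is thus $\overline{P}$, so $4m+2 = 2\,|P|^2$ and hence $|P| = \sqrt{2m+1}$.

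For the phase I would use the factorisation $1 - q^{2k} = -q^k(q^k-q^{-k}) = -2i\,\sin\!\big(\pi k/(4m+2)\big)\,q^k$, valid for the chosen $q$. Because $k$ runs only up to $2m$, each argument $\pi k/(4m+2)$ lies in $(0,\pi/2)$, so every sine is strictly positive. Multiplying over $k$ and using $\sum_{k=1}^{2m}k = m(2m+1)$ gives
$$P = (-2i)^{2m}\Big(\prod_{k=1}^{2m}\sin\tfrac{\pi k}{4m+2}\Big)\,q^{m(2m+1)}.$$
Here $(-2i)^{2m} = (-4)^m = 4^m(-1)^m$ and $q^{m(2m+1)} = e^{i\pi m/2} = i^m$, so the phase collapses to $(-1)^m i^m = (-i)^m$, while the remaining factor $4^m\prod_{k=1}^{2m}\sin(\pi k/(4m+2))$ is a positive real number. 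Thus $P = (-i)^m\,r$ with $r>0$, and comparing with $|P| = \sqrt{2m+1}$ forces $r=\sqrt{2m+1}$, giving $P = (-i)^m\sqrt{2m+1}$.

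The reindexing and the cyclotomic modulus computation are routine; the step I would flag as the crux is the phase bookkeeping. Everything hinges on all the half-angle sines being positive, so that the accumulated phase is exactly $(-i)^m$ with no stray signs creeping in — which is precisely why truncating the range at $k=2m$ (keeping the arguments below $\pi/2$) is essential. A secondary point worth a remark is the dependence on the choice of primitive root $q$, which the surrounding conventions of the paper pin down.
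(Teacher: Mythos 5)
Your proof is correct and follows essentially the same route as the paper: the cyclotomic identity $\prod_{k=1}^{4m+1}(1-\omega^k)=4m+2$ with the central factor peeled off and the remaining terms paired to get the modulus, followed by the half-angle factorisation $1-q^{2k}=-2iq^k\sin\bigl(k\pi/(4m+2)\bigr)$ with all sines positive to pin down the phase $(-i)^m$. The only (cosmetic) difference is that your pairing uses conjugation to produce $|P|^2$ directly, whereas the paper's pairing produces $P^2=(-1)^m(2m+1)$ and then resolves the resulting sign ambiguity by the same positivity argument; your remark about the dependence on the choice of primitive root is also consistent with the paper's conventions.
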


\begin{proof}
Let $\tilde{q}$ denote a primitive $2(2m+1)$th root of unity. We use the identity (see e.g. \cite[Appendix]{spiridonov/zhedanov:1997})
$$\prod_{j=1}^{4m+1} (1-\tilde{q}^j) = 2(2m+1).$$
Now
\begin{align*}
\prod_{j=1}^{4m+1} (1-\tilde{q}^j) &= (1-\tilde{q}^{2m+1}) \prod_{j=1}^{2m} (1-\tilde{q}^j) \prod_{l=2m+2}^{4m+1} \tilde{q}^l (1-\tilde{q}^{-l}) = 2(-1)^m \left( \prod_{j=1}^{2m} (1-\tilde{q}^{2m-j}) \right)^2 \\
&= 2(-1)^m \left( \prod_{j=1}^{2m} (1-q^{2(2m-j)}) \right)^2,
\end{align*}
thus $\prod_{j=0}^{2m-1} (1-q^{4m-2j}) = \tau_m \sqrt{2m+1}$ where $\tau_m \in \{ \pm1, \pm i \}$. Note that
\begin{align*}
\prod_{j=0}^{2m-1} (1-q^{4m-2j}) &= (2i)^{2m} \prod_{j=0}^{2m-1} q^{2m-j} \sin \left(\frac{(2m-j)\pi}{2(2m+1)}\right) = (-4)^{m} q^{m(2m+1)} X = (-4i)^{m} X,
\end{align*}
where $X := \prod_{j=0}^{2m-1} \sin \left((2m-j)\pi/(2(2m+1))\right) \geq 0$. Thus $\tau_m = (-i)^m$.
\end{proof}

By Lemma \ref{lemma:prod_1-q} we have $Y_{t,t} = i^{m} [2m+1]_q \sqrt{2m+1}$.
Then
\begin{align*}
Y_{Q_{\epsilon_1},Q_{\epsilon_2}} &= \frac{1}{4} \left( Y_{\rho_m,\rho_m} + (\epsilon_1+\epsilon_2) Y_{\rho_m,t} + \epsilon_1 \epsilon_2 Y_{t,t} \right) \\
&= \frac{1}{4} \left( [(2m+1)^2]_q + \epsilon_1 \epsilon_2 i^{m} [2m+1]_q \sqrt{2m+1} \right) \\
&= \frac{[2m+1]_q}{4} \left( (-1)^m  + \epsilon_1 \epsilon_2 i^{m} \sqrt{2m+1} \right)
\end{align*}

By \cite{rehren:1990}, the matrix $Y$ is invertible if and only if the only index $\lambda$ for which $Y_{\lambda,\mu} = \text{tr}_T(\text{id}_{\lambda}) \text{tr}_T(\text{id}_{\mu})$ is true for all $\mu$ is $\lambda=\rho_0$. This equality is clearly true for $\lambda = \rho_0$.
Consider $\mu = \rho_1$. Then $Y_{\rho_j,\rho_1} = [3(2j+1)]_q = \alpha \sin(3(2j+1)\pi/(4m+2))$ where $\alpha = \sin(\pi/(4m+2))^{-1}$, and $\text{tr}_T(\mathfrak{f}_j) \text{tr}_T(\mathfrak{f}_1) = [3]_q [2j+1]_q = [2j-1]_q + [2j+1]_q + [2j+3]_q = \alpha(\sin((2j-1)\pi/(4m+2)) + \sin((2j+1)\pi/(4m+2)) + \sin((2j+3)\pi/(4m+2)))$. Thus we need to find all values of $j$ for which $\sin(3(2j+1)\pi/(4m+2)) = \sin((2j-1)\pi/(4m+2)) + \sin((2j+1)\pi/(4m+2)) + \sin((2j+3)\pi/(4m+2))$. Using the triple-angle formula for sine on the the L.H.S., and writing the sum on the R.H.S. as a product of sine and cosine functions, this equality reduces to $\cos(2(2j+1)\pi/(4m+2)) = \sin(2\pi/(4m+2))$, which for $j \in \{0,1,2,\ldots,m-1\}$ is only satisfied for $j=0$. A similarly computation for $Y_{Q_{\pm},\rho_1} = \frac{1}{2} [3(2m+1)]_q$ and $\text{tr}_T \left( \frac{1}{2} \left( \mathfrak{f}_m \pm \mathfrak{t} \right) \right) \text{tr}_T(\mathfrak{f}_1) = \frac{1}{2} [3]_q [2m+1]_q$ shows that these are not equal. Thus $Y_{\lambda,\rho_1} \neq \text{tr}_T(\text{id}_{\lambda}) \text{tr}_T(\mathfrak{f}_1)$ for all $\lambda \neq \rho_0$, therefore $Y$ is invertible.

The statistics phase $\omega(\lambda)$ for $\lambda$ a simple object is given by the diagram
$$\raisebox{-.45\height}{\includegraphics[width=16mm]{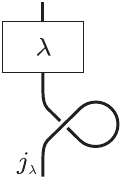}} \;\; = \omega(\lambda) \;\; \raisebox{-.45\height}{\includegraphics[width=11mm]{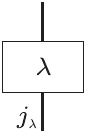}}$$
We have
\begin{align*}
\raisebox{-.45\height}{\includegraphics[width=16mm]{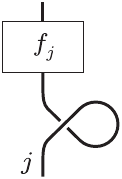}} \;\; &\stackrel{\varphi^{-1}}{=} \;\; \raisebox{-.45\height}{\includegraphics[width=17mm]{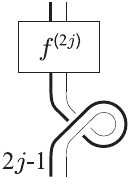}} \;\; = -iq^{-3/2} \;\; \raisebox{-.45\height}{\includegraphics[width=17mm]{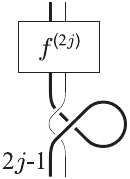}} \;\; = -(-iq^{-1/2})^{4j+1} \;\; \raisebox{-.45\height}{\includegraphics[width=17mm]{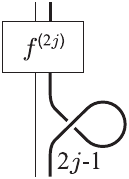}} \\
&= -(-iq^{-1/2})^{4j+1} \;\; \raisebox{-.45\height}{\includegraphics[width=18mm]{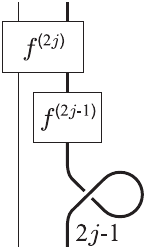}} \;\; = (-iq^{-1/2})^{8j} \;\; \raisebox{-.45\height}{\includegraphics[width=18mm]{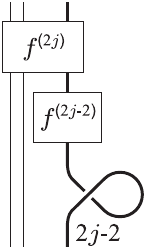}}
\end{align*}
where the last equality follows by the same procedure.
Thus we have the recursion $\omega(\rho_j) = (-iq^{-1/2})^{8j} \omega(\rho_{j-1}) = q^{-4j} \omega(\rho_{j-1})$, which yields $\omega(\rho_j) = q^{-2j(j+1)}$. Since $t=tf_m$, we have
$$\raisebox{-.45\height}{\includegraphics[width=16mm]{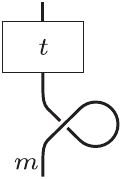}} \;\; = \;\; \raisebox{-.45\height}{\includegraphics[width=16mm]{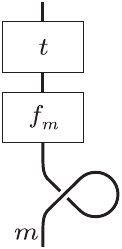}}$$
Hence $\omega(t) = \omega(\rho_m) = q^{-2m(m+1)}$, and we have $\omega(Q_{\pm}) = q^{-2m(m+1)}$.

The $S$-, $T$-matrices are then given by \cite{rehren:1990}
\begin{equation}
S:= |\sigma|^{-1} Y, \qquad T:=\left( \frac{\sigma}{|\sigma|} \right)^{1/3} \mathrm{Diag}(\omega(\lambda)),
\end{equation}
where $\sigma := \sum_{\lambda} \left(\text{tr}_T(\text{id}_{\lambda})\right)^2 \omega(\lambda)^{-1}$ where the summation is over all simple objects $\lambda$. The number $\omega$ satisfies $|\sigma|^2 = \sum_{\lambda} \left(\text{tr}_T(\text{id}_{\lambda})\right)^2$. These $S$-, $T$-matrices agree with those obtained from \eqref{eqn:B-ST} in Section \ref{sect:intro}.

\section{Module categories over $\mathcal{C}^m$} \label{sect:module_cat}

A module category over $\mathcal{C}^m$ is a category $\mathcal{M}$, an action bifunctor $\otimes_{\mathcal{M}} : \mathcal{C}^m \times \mathcal{M} \to \mathcal{M}$ and functorial associativity isomorphism $m_{c,c',x}: (c \otimes c') \otimes_{\mathcal{M}} x \to c \otimes_{\mathcal{M}} (c' \otimes_{\mathcal{M}} x)$ and unit isomorphism $l_x: 1 \otimes_{\mathcal{M}} x \to x$ for any $c,c' \in \mathcal{C}^m$, $x \in \mathcal{M}$, such that the diagrams
$$\xymatrix{
& ((c \otimes c')\otimes c'') \otimes_{\mathcal{M}} x \ar[dl]_{a_{c,c',c''} \otimes \mathrm{id}_x} \ar[dr]^{m_{c \otimes c',c'',x}} & \\
(c \otimes (c' \otimes c'')) \otimes_{\mathcal{M}} x \ar[d]^{m_{c,c' \otimes c'',x}} && (c \otimes c') \otimes_{\mathcal{M}} (c'' \otimes_{\mathcal{M}} x) \ar[d]_{m_{c,c',c'' \otimes_{\mathcal{M}} x}} \\
c \otimes_{\mathcal{M}} ((c' \otimes c'') \otimes_{\mathcal{M}} x) \ar[rr]^{\mathrm{id}_c \otimes m_{c',c'',x}} && c \otimes_{\mathcal{M}} (c' \otimes_{\mathcal{M}} (c'' \otimes_{\mathcal{M}} x))
}$$
and
$$\xymatrix{
(c \otimes 1) \otimes_{\mathcal{M}} x \ar[rr]^{m_{c,1,x}} \ar[dr]^{r_c \otimes \mathrm{id}_x} && c \otimes_{\mathcal{M}} (1 \otimes_{\mathcal{M}} x) \ar[dl]_{\mathrm{id}_c \otimes_{\mathcal{M}} l_x} \\
& c \otimes_{\mathcal{M}} x &
}$$
commute, where $a_{c,c',c''}: (c \otimes c')\otimes c'' \to c \otimes (c' \otimes c'')$ is the associativity isomorphism and $r_c: c \otimes 1 \to c$ the right unit isomorphism in $\mathcal{C}^m$.

The action bifunctor $\otimes_{\mathcal{M}}$ on objects defines a nimrep $G$ indexed by the (non-isomorphic) indecomposable objects of $\mathcal{M}$, so that $c \otimes_{\mathcal{M}} x \cong \sum_y G_c(x,y) y$, where the summation is over all indecomposable objects $y$ in $\mathcal{M}$. In particular, for the $SO(3)_{2m}$ fundamental generator $\rho_1$, we have that $\rho_1 \otimes_{\mathcal{M}} x \cong \sum_y \Delta_{\mathcal{G}} (x,y) y$, where $\Delta_{\mathcal{G}}$ is the adjacency matrix of $\mathcal{G}=G_{\rho_1}$, the fundamental graph which classifies the nimrep. Further, by the associativity isomorphism on $\mathcal{M}$, $\rho_1^j \otimes_{\mathcal{M}} x \cong \sum_y \Delta_{\mathcal{G}}^j (x,y) y$, where $\Delta_{\mathcal{G}}^j (x,y)$ counts the paths of length $j$ from $x$ to $y$.

On morphisms in $\mathcal{M}$, $\otimes_{\mathcal{M}}$ defines the action of diagrams in $\mathcal{W}_{m,n} := \varphi(\mathcal{V}^{(2)}_{m,n})$, which are generated by compositions of cups, caps, triple points and possibly one $t$-box.
For $x,y$ indecomposable objects in $\mathcal{M}$, a basis for $\mathrm{Hom}_{\mathcal{M}} (\rho_1^j \otimes_{\mathcal{M}} x,y)$ is given by all paths of length $j$ on $\mathcal{G}$ from $x$ to $y$.

An alternative definition of a module category is given by \cite{etingof/gelaki/nikshych/ostrik:2015}: the structure of a $\mathcal{C}^m$-module category on a category $\mathcal{M}$ is given by a tensor functor $F$ from $\mathcal{C}^m$ to $\mathrm{Fun}(\mathcal{M},\mathcal{M})$, the category of additive functors from $\mathcal{M}$ to itself.
Such a functor is given on simple objects $c$ by
\begin{equation} \label{eqn:functorF}
F(c) = \bigoplus_{x,y \in \mathcal{G}} G_c(x,y) \, \mathbb{C}_{x,y},
\end{equation}
where the $\mathbb{C}_{x,y}$ are 1-dimensional $R$-$R$ bimodules, $R = (\mathbb{C}\mathcal{G})_0$. The category of $R$-$R$ bimodules has a natural monoidal structure given by $\otimes_R$ (see e.g. \cite{evans/pugh:2010ii}), and we have $R$-$R$ bimodules $F(\rho_1) = \bigoplus_{x,y \in \mathcal{G}} \Delta_{\mathcal{G}}(x,y) \, \mathbb{C}_{x,y} = (\mathbb{C}\mathcal{G})_1$ and $F(\rho_1^m) = \otimes^m (\mathbb{C}\mathcal{G})_1 = (\mathbb{C}\mathcal{G})_m$.
The functor $F$ sends a morphism $\mathfrak{p} \in \mathcal{V}_{m,n}$ of $\mathcal{C}^m$ to a morphism $F(\mathfrak{p}):(\mathbb{C}\mathcal{G})_n \to (\mathbb{C}\mathcal{G})_m$.

Two $\mathcal{C}^m$-module categories $\mathcal{M}, \mathcal{M}'$ with associativity isomorphisms $m,m'$ respectively, are equivalent if there is a functor $H:\mathcal{M} \to \mathcal{M}'$ and a natural isomorphism $s_{c,x}:H(c \otimes_{\mathcal{M}} x) \to c \otimes_{\mathcal{M}'} H(x)$, for any $c \in \mathcal{C}^m$, $x \in \mathcal{M}$ such that diagrams
$$\xymatrix{
& H((c \otimes c') \otimes_{\mathcal{M}} x) \ar[dl]_{H(m_{c,c',x})} \ar[dr]^{s_{c \otimes c',x}} & \\
H(c \otimes_{\mathcal{M}} (c' \otimes_{\mathcal{M}} x)) \ar[d]^{s_{c,c' \otimes_{\mathcal{M}} x}} && (c \otimes c') \otimes_{\mathcal{M}'} H(x) \ar[d]_{m_{c,c',H(x)}'} \\
c \otimes_{\mathcal{M}'} H(c' \otimes_{\mathcal{M}} x) \ar[rr]^{\mathrm{id}_c \otimes s_{c',x}} && c \otimes_{\mathcal{M}'} (c' \otimes_{\mathcal{M}'} H(x))
}$$
and
$$\xymatrix{
H(1 \otimes_{\mathcal{M}} x) \ar[rr]^{s_{1,x}} \ar[dr]^{H(l_x)} && 1 \otimes_{\mathcal{M}'} H(x) \ar[dl]_{l_{H(x)}}\\
& H(x) &
}$$
commute.

Any two equivalent module categories $\mathcal{M}_1$, $\mathcal{M}_2$ will have the same indecomposable objects up to isomorphism.
The nimrep matrices $G_c$ are uniquely determined by $G_{\rho}$, for any simple object $c \not \cong Q_{\pm}$ in $\mathcal{C}$. 
The classification of nimreps is given in Section \ref{sect:mod-inv-nimreps}. Given a module category $\mathcal{M}$, the automorphism $\iota$ of $\mathcal{C}$ that interchanges $Q_+ \leftrightarrow Q_-$ yields a functor $H:\mathcal{M} \to \mathcal{M}_{\iota}$, where $\mathcal{M}_{\iota}$ is a module category where the nimrep matrices $G_{Q_+}$, $G_{Q_-}$ are interchanged. We determine the equivalence or inequivalence of  $\mathcal{M}$ and $\mathcal{M}_{\iota}$ in Section \ref{sect:mod-inv-nimreps}.
Equivalence of $\otimes_{\mathcal{M}_1}$, $\otimes_{\mathcal{M}_2}$ on morphisms is given by a linear transformation $\mathrm{Hom}_{\mathcal{M}_1} (\rho_1^j \otimes_{\mathcal{M}_1} x,y) \to \mathrm{Hom}_{\mathcal{M}_2} (\rho_1^j \otimes_{\mathcal{M}_2} H(x),H(y))$, where $z \in \mathcal{M}_1$, $H(z) \in \mathcal{M}_2$ both label the same vertex of $\mathcal{G}$, $z \in \{ x,y \}$. Any such linear transformation is determined by linear transformations $GL(\Delta_{\mathcal{G}}(x,y),\bbC) \ni \eta^{(xy)}: \mathrm{Hom}_{\mathcal{M}_1} (\rho_1 \otimes_{\mathcal{M}_1} x,y) \to \mathrm{Hom}_{\mathcal{M}_2} (\rho_1 \otimes_{\mathcal{M}_2} H(x),H(y))$.
The classification of all possible actions of any diagram in $\mathcal{V}_{m,n}$ is given in Section \ref{sect:cell-systems}.

\section{Classification of modular invariants and nimreps} \label{sect:mod-inv-nimreps}

In this section we classify $SO(3)$ modular invariants and their associated nimreps.

If $Z$ is an $SO(3)$ modular invariant, then from \eqref{eqn:B-ST} that $X:=bZb^T$ is an $SU(2)$ modular invariant (at $SU(2)$ level $4m$), where $b$ is the branching coefficient matrix. Hence $X$ must be one of $Z_{A_{4m+1}}$ or $Z_{D_{2m+2}}$, or for $m=4,7$, it could also be $Z_{E_7}$, $Z_{E_8}$ respectively. In fact, due to the reflection symmetry of $b$ about its middle row, indexed by $\lambda^{(2m)}$, $X$ has reflection symmetry about both its middle row and middle column, and hence cannot be $Z_{A_{4m+1}}$ which is the identity matrix. Thus for all levels except $m=4,7$, $X$ must be $Z_{D_{2m+2}}$. When $X=Z_{D_{2m+2}}$, the equation $X=bZb^T$ fixes all entries of $Z$ to be $Z_{\lambda,\mu} = \delta_{\lambda,\mu}$, apart from when both $\lambda,\mu \in \{ \lambda_m^{\pm} \}$. The identity $ZS=SZ$ forces $Z_{\lambda_m^+,\lambda_m^+} = Z_{\lambda_m^-,\lambda_m^-}$ and $Z_{\lambda_m^+,\lambda_m^-} = Z_{\lambda_m^-,\lambda_m^+}$. Thus there are only two possibilities for $Z$, the identity invariant which we denote by $Z_{\mathcal{A}_{2m}}$, whilst we denote the non-trivial invariant by $Z_{\sigma_{2m}}$.

At $SO(3)$ level $8$ with $X=Z_{E_7}$, the equation $X=bZb^T$ fixes all entries of $Z$ apart from when one of $\lambda,\mu \in \{ \lambda_1, \lambda_4^{\pm} \}$ and the other is $\lambda_4^{\pm}$. From $ZS=SZ$ there are only four possibilities $Z_{a,b,c}$ for $(a,b,c) \in \{ (0,0,1), (0,1,0), (1,0,0), (1,1,0) \}$, where $a,b,c$ denote the values of the entries $Z_{\lambda_1,\lambda_4^+}, Z_{\lambda_4^+,\lambda_1}, Z_{\lambda_4^+,\lambda_4^+}$ respectively. Note that $Z_{0,0,1}$ and $Z_{1,1,0}$ are related by interchanging the roles of $\lambda_4^+ \leftrightarrow \lambda_4^-$, and $Z_{1,0,0} = Z_{0,1,0}^T$. We denote $Z_{1,1,0}$ by $Z_{\mathcal{E}_8}$ and $Z_{1,0,0}$ by $Z_{\mathcal{E}_8^c}$.
At $SO(3)$ level $14$ with $X=Z_{E_8}$, the equation $X=bZb^T$ fixes all entries of $Z$, which we denote by $Z_{\mathcal{E}_{14}}$.

The complete list of $SO(3)$ modular invariants is thus
\begin{align*}
Z_{\mathcal{A}_{2m}} &= \sum_{j=0}^{m-1} |\chi_j|^2 + |\chi_{m,+}|^2 + |\chi_{m,-}|^2 \\
Z_{\sigma_{2m}} &= \sum_{j=0}^{m-1} |\chi_j|^2 + \chi_{m,+}\overline{\chi_{m,-}} + \text{c.c.} \\
Z_{\mathcal{E}_8} &= |\chi_0|^2 + |\chi_2|^2 + |\chi_3|^2 + |\chi_{4,-}|^2 + \chi_1\overline{\chi_{4,+}} + \text{c.c.} \\
Z_{\mathcal{E}_8^c} &= Z_{\mathcal{E}_8}Z_{\sigma_8} = |\chi_0|^2 + |\chi_2|^2 + |\chi_3|^2 + \chi_1\overline{\chi_{4,-}} + \chi_{4,-}\overline{\chi_{4,+}} + \chi_{4,+}\overline{\chi_1} \\
Z_{\mathcal{E}_{14}} &= Z_{\mathcal{E}_{14}^c} = Z_{\mathcal{E}_{14}}Z_{\sigma_{14}} = |\chi_0+\chi_5|^2 + |\chi_3+\chi_6|^2
\end{align*}
where it is convenient to write $Z$ as a quadratic form.

The eigenvalues of the fusion graph $\mathcal{A}_{2m}$ for $\mathcal{C}^m$ are of the form $\beta_{\lambda} := S_{\rho_1,\lambda}/S_{\rho_0,\lambda}$ for $\lambda$ a simple object of $\mathcal{C}^m$. For $\lambda=\rho_{j-1}$, $j=1,2,\ldots,m$, we have \begin{equation} \label{eqn:evaluesA}
\beta^{(j)} := \beta_{\lambda} = \frac{\sin (3 \pi j/(4m+2))}{\sin(\pi j/(4m+2))} = 2 \cos (2 \pi j/(4m+2)) + 1.
\end{equation}
For $\lambda = Q_{\pm}$, since $Y_{\rho_j,Q_{\pm}} = \frac{1}{2} Y_{\rho_j,\rho_m}$, we again have that $\beta_{Q_{\pm}}$ is given by \eqref{eqn:evaluesA} with $j=m+1$. The values $j=1,2,\ldots,m+1$ are called the exponents of $\mathcal{A}_{2m}$.
The exponents of any nimrep are a subset (ignoring multiplicities) of the exponents for $\mathcal{A}_{2m}$, and thus by \eqref{eqn:evaluesA} the eigenvalues of the nimrep graph $\mathcal{G} = G_{\rho_1}$ lie in $[-1,3)$. Since its eigenvalues are real, any such nimrep $\mathcal{G}$ must be a symmetric, or undirected, graph.
Graves \cite{graves:2010} classifies all symmetric graphs whose eigenvalues lie in $[-1,3]$.
For each $SO(3)$ modular invariant there is only one nimrep with the correct exponents (note that the exponent $j$ corresponds to the exponent $(4m-4j+2,2j-1,2j-1)$ in \cite{graves:2010}). These are $AC_{m+2} \equiv \mathcal{A}_{2m}$, $I_4 \equiv \mathcal{E}_8$ (see \cite[Tables A.1, A.2]{graves:2010}) and $BT_m \equiv \sigma_{2m}$, $H_4 \equiv \mathcal{E}_{14}^c$, $S_{0,2,1} \equiv \mathcal{E}_8^c$, $S_{0,1,2,0} \equiv \mathcal{E}_{14}$ (see \cite[Tables 8.1, 8.2]{graves:2010}.
These graphs are illustrated in Figure \ref{fig-SO3_nimrep_graphs}.
Note that the vertices of the graphs $\mathcal{A}_{2m}$, $\sigma_{2m}$ are the even, odd vertices respectively of the Dynkin diagram $D_{2m+2}$, with the edges given by multiplication by the $SO(3)_{2m}$ fundamental generator $\rho_1$ (which is a vertex of $\mathcal{A}_{2m}$). Similarly the vertices of the graphs $\mathcal{E}_8$, $\mathcal{E}_8^c$ are the even, odd vertices respectively of the Dynkin diagram $E_7$, whilst the vertices of $\mathcal{E}_{14}$, $\mathcal{E}_{14}^c$ are the even, odd vertices respectively of the Dynkin diagram $E_8$, with edges again given by multiplication by the $SO(3)_{2m}$ fundamental generator $\rho_1$.

\begin{figure}
\begin{center}
\includegraphics[width=140mm]{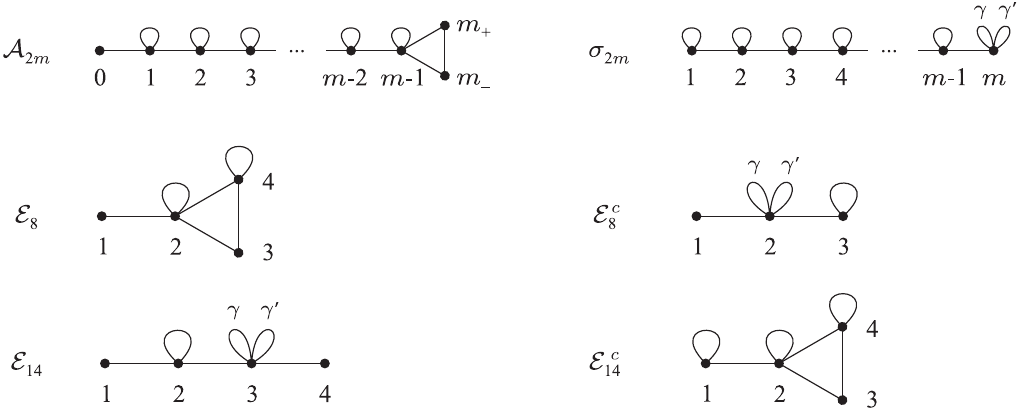} \\
\caption{$SO(3)_{2m}$ nimrep graphs} \label{fig-SO3_nimrep_graphs}
\end{center}
\end{figure}

\begin{table}[tb]
\begin{center}
\begin{tabular}{|c|c|c|c|}
\hline  level $2m$   & nimrep $\mathcal{G}$  & $SU(2)$ ancestor $\mathcal{H}$   & $\mathcal{G}$ the even/odd part of $\mathcal{H}$ \\
\hline
\hline  $2m$        & $\mathcal{A}_{2m}$    & $D_{2m+2}$                        & even          \\
\hline  $2m$        & $\sigma_{2m}$         & $D_{2m+2}$                        & odd           \\
\hline  8           & $\mathcal{E}_8$       & $E_7$                             & even          \\
\hline  8           & $\mathcal{E}_8^c$     & $E_7$                             & odd           \\
\hline  14          & $\mathcal{E}_{14}$    & $E_8$                             & even          \\
\hline  14          & $\mathcal{E}_{14}^c$  & $E_8$                             & odd           \\
\hline
\end{tabular}\\
\caption{Classification of $SO(3)$ modular invariants} \label{table:SO3-MI}
\end{center}
\end{table}

The nimrep graphs $\mathcal{G} = G_{\rho_1}$ illustrated in Figure \ref{fig-SO3_nimrep_graphs} uniquely determine all other nimrep graphs $G_c$ for simple objects $c \not \cong Q_{\pm}$. Using the fusion rules of $\mathcal{C}$, the nimrep graphs $G_{Q_{\pm}}$ are both uniquely determined up to interchanging $G_{Q_+} \leftrightarrow G_{Q_-}$. 
For all cases apart from $\mathcal{G} = \mathcal{A}_{2m}, \mathcal{E}_8$, the nimrep graphs $G_{Q_+}$, $G_{Q_-}$ are equal. Thus $\mathcal{M} = \mathcal{M}_{\iota}$. For $\mathcal{A}_{2m}$ the functor $\iota$ is an equivalence. For $\mathcal{E}_8$, the nimrep graphs $G_{Q_+}$, $G_{Q_-}$ are given by
$$\left( \begin{array}{cccc}
0 & 1 & 0 & 0 \\
1 & 1 & 1 & 1 \\
0 & 1 & 0 & 1 \\
0 & 1 & 1 & 1
\end{array} \right) ,
\left( \begin{array}{cccc}
1 & 0 & 1 & 0 \\
0 & 2 & 0 & 1 \\
1 & 0 & 1 & 1 \\
0 & 1 & 1 & 1
\end{array} \right),$$
where the labelling of rows and columns corresponds to the numbering of the vertices in Figure \ref{fig-SO3_nimrep_graphs}. These are easily found from the fusion rules for $\mathcal{C}^m$, using the fact that $G_{f_m} = G_{Q_+} + G_{Q_-}$.
The functor $\iota$ which interchanges $G_{Q_+} \leftrightarrow G_{Q_-}$ yields inequivalent module categories, that is, for $\mathcal{E}_8$, $\mathcal{M}$ and $\mathcal{M}_{\iota}$ are inequivalent module categories.

\section{Cell system for $SO(3)_{2m}$} \label{sect:cell-systems}

Morphisms in $\mathcal{C}^m$ are given by linear combinations of planar diagrams generated by triple points and $\mathfrak{t}$-boxes, where $\mathfrak{t}$ denotes the morphism given by a $t$-box, subject to relations \eqref{eqn:SO3-relation1}-\eqref{eqn:diagrammatic_relations-T}, where \eqref{eqn:diagrammatic_relations-T} should be interpreted as a relation on morphisms $\mathfrak{t}$ and $\mathfrak{f}_{2m} = \text{id}_{\rho_{2m}}$.
The strings in any such diagram may be isotoped so that in any horizontal strip there is only one of the following elements: a cup, cap, triple point or $\mathfrak{t}$-box. In this section we will classify the actions of these generating elements on morphisms in any module category $\mathcal{M}$ with associated nimrep $G$, where $\mathcal{G} = G_{\rho_1}$ is one of the $SO(3)_{2m}$ graphs in Figure \ref{fig-SO3_nimrep_graphs}.

For $x,y$ indecomposable objects in $\mathcal{M}$, a basis of morphisms in $\mathrm{Hom}_{\mathcal{M}} (\rho_1^j \otimes_{\mathcal{M}} x,y)$ are given by all paths of length $j$ on $\mathcal{G}$ from $x$ to $y$.
Recall from Section \ref{sect:module_cat} that the structure of a module category $\mathcal{M}$ is equivalent to a tensor functor $F:\mathcal{C}^m \to \mathrm{Fun}(\mathcal{M},\mathcal{M})$.
For $\mathfrak{D} \in \mathcal{W}_{m,n} = \varphi(\mathcal{V}^{(2)}_{m,n})$ a morphism in $\mathcal{C}^m$, $F(\mathfrak{D}) : \mathrm{Hom}_{\mathcal{M}} (\rho_1^n \otimes_{\mathcal{M}} x,y) \to \mathrm{Hom}_{\mathcal{M}} (\rho_1^m \otimes_{\mathcal{M}} x,y)$. We will represent $F(\mathfrak{D})$ by the same diagram as $\mathfrak{D}$ but using thick lines to distinguish it from $\mathfrak{D}$. Since $F$ is a functor, the diagrams with thick lines satisfy the same relations as diagrams in $\mathcal{C}^m$. These diagrams will be maps taking an input along the bottom edge and giving output along the top edge, where for $F(\mathfrak{D})$ acting on a single path $\sigma$, the $i$th vertex along the bottom edge of the diagram (i.e. the endpoint of a strand) has as input the $i$th edge of $\sigma$.
Thus the operator $F(\mathfrak{D})$ acting on a path $\sigma$ of length $n$ on $\mathcal{G}$ from $x$ to $y$ will yield an element in $(\bbC \mathcal{G})_m$ given by a linear combination of paths of length $m$ on $\mathcal{G}$ from $x$ to $y$.
We require that a diagram consisting only of vertical strands acts as the identity, e.g.
$$\begin{array}{c} \includegraphics[width=20mm]{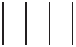} \\ a \quad b \quad c \quad d \end{array} = a \, b \, c \, d,$$
for a path $a \to b \to c \to d$ of length 4, where $a$, $b$, $c$ and $d$ are edges on $\mathcal{G}$.

Let $A_0 = \bbC^n$ where $n$ is the number of vertices of $\mathcal{G}$, and let $A_0 \subset A_1 \subset A_2 \subset \cdots$ be the path algebra (in the usual operator algebraic sense \cite{evans/kawahigashi:1998}), where paths may start at any vertex of $\mathcal{G}$, with $A_j$ given by all pairs of paths $(\alpha_1,\alpha_2)$ of length $j$ such that $s(\alpha_1)=s(\alpha_2)$ and $r(\alpha_1)=r(\alpha_2)$, where $s(\alpha),r(\alpha)$ denote the source, range vertex respectively of a path $\alpha$. The $A_i$ are thus finite dimensional von Neumann algebras, with the Bratteli diagrams for each inclusion $A_j \subset A_{j+1}$ given by the bipartite unfolding of $\mathcal{G}$.
The bipartite unfolding $\mathcal{G}'$ of a non-bipartite graph $\mathcal{G}$ with vertex set $V$ and adjacency matrix $\Delta$ is the bipartite graph with vertex set $V \times \{0,1\}$, and where the number of edges from vertex $(v,i)$ to $(v',j)$ is given by $(1-\delta_{i,j}) \Delta_{v,v'}$.
Multiplication is defined on basis elements $(\alpha_1,\alpha_2), (\alpha_1',\alpha_2') \in A_j$ by $(\alpha_1,\alpha_2) (\alpha_1',\alpha_2') = \delta_{\alpha_2,\alpha_1'} (\alpha_1,\alpha_2')$, and the embedding of a basis element $(\alpha_1,\alpha_2) \in A_j$ in $A_l$, $l>j$, is given by $\sum_{\mu} (\alpha_1 \cdot \mu,\alpha_2 \cdot \mu)$ where the summation is over all paths $\mu$ of length $l-j$ such that $s(\mu)=r(\alpha_1)$.
Note that $A_n \cong F(\mathcal{W}_{n})$.

Consider two module categories $\mathcal{M}_1$, $\mathcal{M}_2$ which have the same set of objects and nimrep graphs. These two module categories are equivalent if there are linear transformations  $GL(\Delta_{\mathcal{G}}(x,y),\bbC) \ni \eta^{(x,y)}: \mathrm{Hom}_{\mathcal{M}_1} (\rho_1 \otimes_{\mathcal{M}_1} x,y) \to \mathrm{Hom}_{\mathcal{M}_2} (\rho_1 \otimes_{\mathcal{M}_2} x,y)$.

\subsection{Canonical module categories} \label{sect:cells-intro}

To begin with, we consider the action of cups and caps.
For each pair of vertices $x$, $y$ of $\mathcal{G}$, denote by $V_{x,y} := \mathrm{Hom}_{\mathcal{M}} (\rho_1 \otimes_{\mathcal{M}} x,y)$. For any functor $F$ there exists two bilinear forms $E^{\cup}_{x,y}, E^{\cap}_{x,y} : V_{x,y} \times V_{y,x} \to \bbC$ such that the creation, annihilation operators associated to cups, caps respectively are given by
$$\begin{array}{c} \includegraphics[width=10mm]{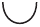} \\ e_x \end{array} = \sum_{b,c:s(b)=r(c)=x} E^{\cup}_{x,r(b)}(b,c) \, bc, \qquad\qquad
\begin{array}{c} \includegraphics[width=10mm]{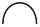} \\ a \qquad b \end{array} = E^{\cap}_{s(a),r(a)}(a,b) \, e_{s(a)}$$
where $e_x$ denotes a basis element for the one-dimensional space of paths of length 0 at vertex $x$ of $\mathcal{G}$.
Since diagrams in $\mathcal{C}^m$ are invariant under isotopy we have that
\begin{equation} \label{eqn:SU2isotopy}
\raisebox{-.45\height}{\includegraphics[width=15mm]{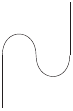}} \;\; = \;\; \raisebox{-.45\height}{\includegraphics[width=1mm]{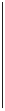}} \;\; = \;\; \raisebox{-.45\height}{\includegraphics[width=15mm]{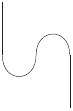}}
\end{equation}
Under the functor $F$ the left hand side yields
\begin{align*}
\begin{array}{c} \includegraphics[width=15mm]{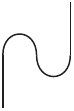} \\ a \qquad\;\; \mbox{ } \end{array} = \sum_{b,c:s(b)=r(c)=r(a)} E^{\cup}_{r(a),r(b)}(b,c) \; \begin{array}{c} \includegraphics[width=15mm]{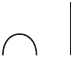} \\ a \quad\; b \quad\; c \end{array} = \sum_{b,c} E^{\cup}_{r(a),r(b)}(b,c) E^{\cap}_{s(a),r(a)}(a,b) \, c,
\end{align*}
where the first equality comes from applying the creation operator corresponding to the cup on the right side of the diagram.
Thus the first equality in \eqref{eqn:SU2isotopy} yields that
$$\sum_{b:s(b)=r(a)} E^{\cap}_{s(a),r(a)}(a,b) E^{\cup}_{r(a),r(b)}(b,c) = \left( E^{\cap}_{s(a),r(a)} E^{\cup}_{r(a),r(a)} \right) (a,c) = \delta_{a,c},$$
whilst the second equality similarly yields $ \left( E^{\cup}_{s(a),r(a)} E^{\cap}_{r(a),s(a)} \right) (c,a) = \delta_{a,c}$. Thus the bilinear forms $E^{\cup}_{x,y}$, $E^{\cap}_{y,x}$ are non-degenerate and
\begin{equation} \label{eqn:Ecup=Ecap-1}
E^{\cup}_{x,y} = \left( E^{\cap}_{y,x} \right)^{-1}
\end{equation}
for any pair of vertices $x$, $y$ of $\mathcal{G}$.
Since a closed loop contributes a factor $[3]_q$ in $\mathcal{C}^m$, we also have that
$$\begin{array}{c} \includegraphics[width=10mm]{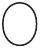} \\ e_x \end{array} = \sum_{b,c:s(b)=r(c)=x} E^{\cup}_{x,r(b)}(b,c) \; \begin{array}{c} \includegraphics[width=10mm]{fig-annihilation} \\ b \qquad c \end{array} = \sum_{b,c} E^{\cup}_{x,r(b)}(b,c) E^{\cap}_{x,r(b)}(b,c) e_x = [3]_q e_x,$$
which, by \eqref{eqn:Ecup=Ecap-1}, yields (c.f. \cite[eq. (3)]{etingof/ostrik:2004})
\begin{align} \label{eqn:cells-circ=[3]}
\sum_{y} \text{Tr}\left( E_{x,y}^{\cup} \left( (E_{y,x}^{\cup})^t \right)^{-1} \right) = [3]_q,
\end{align}
where the summation is over all vertices $y$ of $\mathcal{G}$.

\begin{Def}
A {\bf canonical} $SO(3)$ module category is a module category $\mathcal{M}$ where the bilinear forms are given by
\begin{equation} \label{eqn:canonical-E}
E^{\cup}_{x,y}(a^{(1)}_{xy},a^{(1)}_{yx}) = \sqrt{\phi_y}/\sqrt{\phi_x}, \;\; x \neq y, \qquad E^{\cup}_{x,x}(a^{(1)}_{xx},b^{(1)}_{xx}) = \delta_{a^{(1)}_{xx},b^{(1)}_{xx}}.
\end{equation}
\end{Def}

We will now show that any $SO(3)$ module category is equivalent to a canonical module category. First we need the following Lemma:

\begin{Lemma} \label{Lemma:E/E=PF}
Let $\mathcal{G}$ be an $SO(3)_{2m}$ graph as in Figure \ref{fig-SO3_nimrep_graphs}, and let $x,y$ be any adjacent vertices of $\mathcal{G}$. Then
\begin{equation} \label{eqn:E/E=PF}
\phi_x E^{\cup}_{x,y}(a_{xy},a_{yx})= \phi_y E^{\cup}_{y,x}(a_{yx},a_{xy}),
\end{equation}
where $a_{xy}$ is an element in the basis of $V_{x,y}$ and $(\phi_x)$ is the Perron-Frobenius eigenvector of $\mathcal{G}$ corresponding to the Perron-Frobenius eigenvalue $[3]_q$.
\end{Lemma}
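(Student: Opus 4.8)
The plan is to reduce \eqref{eqn:E/E=PF} to a purely scalar statement about the ratios of the cup forms, and then to solve the resulting system by induction along $\mathcal{G}$. One checks from Figure \ref{fig-SO3_nimrep_graphs} that each $SO(3)_{2m}$ graph has single edges between distinct vertices and at most one self-loop at each vertex (carrying the summand $\rho_j$ of \eqref{eqn:fusion_rules-f1} in the type-$\mathcal{A}$ case), so that $V_{x,y}$ is one-dimensional whenever $x,y$ are adjacent. Writing $a_{xy}$ for its distinguished basis edge, I set
$$t_{xy} := \frac{E^{\cup}_{x,y}(a_{xy},a_{yx})}{E^{\cup}_{y,x}(a_{yx},a_{xy})}.$$
With this notation \eqref{eqn:E/E=PF} is exactly the assertion $t_{xy}=\phi_y/\phi_x$ for every edge, and it is trivially true when $x=y$, where $t_{xx}=1$.

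First I would record the relations that $t$ satisfies automatically: by construction $t_{xy}t_{yx}=1$, and $t_{xx}=1$ on every self-loop. The key input is the closed-loop identity \eqref{eqn:cells-circ=[3]}: in the one-dimensional case each summand $\mathrm{Tr}\big(E^{\cup}_{x,y}((E^{\cup}_{y,x})^t)^{-1}\big)$ collapses to the scalar $t_{xy}$, so that
$$\sum_{y} \Delta_{\mathcal{G}}(x,y)\, t_{xy} = [3]_q \qquad \text{for every vertex } x,$$
the self-loop contributing the term $t_{xx}=1$. I emphasise that this is obtained from isotopy invariance and the loop value $[3]_q$ alone, with no appeal to the $\ast$-structure. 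The target values $\tau_{xy}:=\phi_y/\phi_x$ obey the identical system, namely $\tau_{xy}\tau_{yx}=1$, $\tau_{xx}=1$, and $\sum_y \Delta_{\mathcal{G}}(x,y)\,\tau_{xy}=[3]_q$, the last being precisely the Perron--Frobenius equation $\Delta_{\mathcal{G}}\phi=[3]_q\phi$, with the self-loop accounting for the constant $+1$ in $[3]_q$ from \eqref{eqn:evaluesA}.

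The proof is then completed by induction from the leaves. After deleting the self-loops, each graph in Figure \ref{fig-SO3_nimrep_graphs} is a tree (for $\mathcal{A}_{2m}$ a path $\rho_0-\cdots-\rho_{m-1}$ with $\rho_{m-1}$ joined to the two fork vertices $Q_\pm$, and similarly for the $\mathcal{E}$-graphs). At a leaf $x$ with unique reduced neighbour $z$, the vertex equation reads $t_{xz}+(1$ if a self-loop is present$)=[3]_q$, which determines $t_{xz}$; since $\tau$ solves the same equation, $t_{xz}=\tau_{xz}$, and hence $t_{zx}=1/t_{xz}=\tau_{zx}$. Peeling this leaf and iterating, every newly exposed vertex has all but one incident edge already fixed, so its vertex equation forces the remaining one to agree with $\tau$, and the final vertex equation is an automatic consistency check. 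Thus $t_{xy}=\tau_{xy}=\phi_y/\phi_x$ on every edge, which is \eqref{eqn:E/E=PF}.

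The principal subtlety is the correct bookkeeping of the self-loops: they are exactly what make the vertex sums equal the full eigenvalue $[3]_q$ rather than only its off-diagonal part, and without them the base case and the matching with $\Delta_{\mathcal{G}}\phi=[3]_q\phi$ would both fail. The one structural fact that must be verified is that deleting self-loops leaves a tree in each case, since it is the tree property that makes the single-vertex equations \eqref{eqn:cells-circ=[3]} determine every $t_{xy}$ uniquely. Were a reduced graph to contain a cycle, these equations would fall one short, and one would have to supplement them with the positivity $t_{xy}>0$ furnished by the $\ast$-structure (reflecting cups to caps, together with \eqref{eqn:Ecup=Ecap-1}) plus a cycle-consistency argument; for the $SO(3)_{2m}$ graphs the tree structure removes this difficulty entirely.
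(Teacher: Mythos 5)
Your strategy is the same as the paper's --- read \eqref{eqn:cells-circ=[3]} as one equation per vertex for the edge ratios $t_{xy}$ and solve the resulting system inductively along $\mathcal{G}$, matching against the Perron--Frobenius solution --- but both structural claims on which your induction rests are false for the $SO(3)_{2m}$ graphs, and they fail exactly where the paper's proof has to do its real work. First, deleting self-loops does \emph{not} leave a tree: in $\mathcal{A}_{2m}$ the vertices $p_+,p_-$ are joined to each other as well as to the end of the chain, giving a triangle $[m,p_+,p_-]$, and likewise $\mathcal{E}_8$ and $\mathcal{E}_{14}^c$ contain the triangle $[2,3,4]$ (see Remark \ref{rem:equivalent-unitary-W}, which lists exactly these as the length-3 loops involving no self-loops, and the cells $W_{m-1,m_+,m_-}$ and $W_{2,3,4}$ in Theorems \ref{thm:cell_system-A2m}--\ref{thm:cell_system-E14c}). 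Your leaf-peeling therefore terminates with three coupled equations in the three unknown ratios around the triangle; because of the reciprocal constraint $t_{yx}=1/t_{xy}$ this system is quadratic, not linear, so uniqueness of its solution is not a counting argument. The paper resolves it by explicit computation: for $\mathcal{A}_{2m}$ the resulting quadratic happens to have vanishing discriminant, and for $\mathcal{E}_8$, $\mathcal{E}_{14}^c$ uniqueness is checked using quantum-integer identities. Your fallback for cycles (positivity from the $\ast$-structure plus a cycle-consistency argument) is not worked out, and it is also not available at the stated level of generality: the lemma is invoked for arbitrary module categories, and the paper's proof deliberately uses only \eqref{eqn:cells-circ=[3]}, never the $\ast$-structure.

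Second, your claim that each vertex carries at most one self-loop, so that the case $x=y$ is ``trivially true'' with $t_{xx}=1$, is wrong for $\sigma_{2m}$, $\mathcal{E}_8^c$ and $\mathcal{E}_{14}$: each of these graphs has a double self-loop $(\gamma,\gamma')$ at one vertex, so $V_{x,x}$ is two-dimensional there. At that vertex the content of \eqref{eqn:E/E=PF} is the genuinely non-trivial symmetry $E^{\cup}_{x,x}(\gamma,\gamma')=E^{\cup}_{x,x}(\gamma',\gamma)$ --- which is needed later in the reduction to canonical module categories --- and the corresponding summand of \eqref{eqn:cells-circ=[3]} is a true $2\times 2$ trace of $E^{\cup}_{x,x}\bigl(((E^{\cup}_{x,x})^t)^{-1}\bigr)$, not the constant $1$, so your scalar vertex equation is not even correctly stated there. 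The paper's proof handles this case separately: the already-determined chain contributions force that $2\times 2$ term to equal $2$, which yields $(E^{\cup}_{x,x}(\gamma,\gamma')-E^{\cup}_{x,x}(\gamma',\gamma))^2=0$. In short, the proposal has genuine gaps at the triangles and at the double self-loops --- precisely the two configurations around which the paper's case-by-case proof is organised.
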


\begin{proof}
For $E^{\cup}_{y,x}(a_{yx},a_{xy}) \neq 0$, let $F_{x,y} := E^{\cup}_{x,y}(a_{xy},a_{yx})/E^{\cup}_{y,x}(a_{yx},a_{xy})$. Consider a subgraph of $\mathcal{G}$ of the form
\begin{center}
\includegraphics[width=20mm]{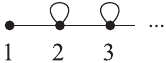}
\end{center}
with $n$ vertices, where all edges attached to vertex $i < n$ in $\mathcal{G}$ are included in this subgraph. Denote by $a_i$, $\tilde{a}_i$ a basis for $V_{i,i+1}$, $V_{i+1,i}$ respectively. Then by induction on $i$ we see that \eqref{eqn:cells-circ=[3]} gives $F_{i,i+1} = [2i+1]_q/[2i-1]_q = \phi_{i+1}/\phi_i$. Similarly, for a subgraph of $\mathcal{G}$ of the form
\begin{center}
\includegraphics[width=20mm]{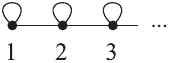}
\end{center}
with $n$ vertices, where all edges attached to vertex $i < n$ in $\mathcal{G}$ are included in this subgraph, \eqref{eqn:cells-circ=[3]} gives $F_{i,i+1} = [2i]_q/[2i]_q = \phi_{i+1}/\phi_i$.

For the edges of $\mathcal{A}_{2m}$ connecting the vertices $m, p_+, p_-$, \eqref{eqn:cells-circ=[3]} yields the equations $F_{m,p_+} + F_{m,p_-} = [2m+1]_q/[2m-1]_q$, $F_{p_{\pm},m} + F_{p_{\pm},p_{\mp}} = [3]_q$, which have the unique solution $F_{m,p_{\pm}} = [2m+1]_q/(2[2m-1]_q) = \phi_{p_{\pm}}/\phi_{m}$ and $F_{p_{\pm},p_{\mp}} = 1 = \phi_{p_{\mp}}/\phi_{p_{\pm}}$.

For the edges of $\mathcal{E}_{8}$ connecting the vertices $2,3,4$, \eqref{eqn:cells-circ=[3]} yields the equations $F_{2,3} + F_{2,4} = [5]_q/[3]_q$, $F_{3,2} + F_{3,4} = [3]_q$ and $F_{4,2} + F_{4,3} = [4]_q/[2]_q$, which have the unique solution $F_{2,3} = [4]_q/([2]_q[3]_q) = \phi_{3}/\phi_{2}$, $F_{2,4} = [6]_q/([2]_q[3]_q) = \phi_{4}/\phi_{2}$ and $F_{3,4} = [6]_q/[4]_q = \phi_{4}/\phi_{3}$.

For the edges of $\mathcal{E}_{14}^c$ connecting the vertices $2,3,4$, \eqref{eqn:cells-circ=[3]} yields the equations $F_{2,3} + F_{2,4} = [6]_q/[4]_q$, $F_{3,2} + F_{3,4} = [3]_q$ and $F_{4,2} + F_{4,3} = [4]_q/[2]_q$, which have the unique solution $F_{2,3} = [3]_q/[5]_q = \phi_{3}/\phi_{2}$, $F_{2,4} = [2]_q^2/[5]_q = \phi_{4}/\phi_{2}$ and $F_{3,4} = [2]_q^2/[3]_q = \phi_{4}/\phi_{3}$.

For the double self-loop $(\gamma,\gamma')$ of the graph $\sigma_{2m}$, \eqref{eqn:cells-circ=[3]} yields
\begin{align*}
F_{m,m-1} + \frac{2E^{\cup}_{m,m}(\gamma,\gamma) E^{\cup}_{m,m}(\gamma',\gamma') - E^{\cup}_{m,m}(\gamma,\gamma')^2 - E^{\cup}_{m,m}(\gamma',\gamma)^2}{E^{\cup}_{m,m}(\gamma,\gamma) E^{\cup}_{m,m}(\gamma',\gamma') - E^{\cup}_{m,m}(\gamma,\gamma') E^{\cup}_{m,m}(\gamma',\gamma)} &= [3]_q \\
\Rightarrow \qquad \frac{2E^{\cup}_{m,m}(\gamma,\gamma) E^{\cup}_{m,m}(\gamma',\gamma') - E^{\cup}_{m,m}(\gamma,\gamma')^2 - E^{\cup}_{m,m}(\gamma',\gamma)^2}{E^{\cup}_{m,m}(\gamma,\gamma) E^{\cup}_{m,m}(\gamma',\gamma') - E^{\cup}_{m,m}(\gamma,\gamma') E^{\cup}_{m,m}(\gamma',\gamma)} &= 2 \\
\Rightarrow \qquad (E^{\cup}_{m,m}(\gamma,\gamma') - E^{\cup}_{m,m}(\gamma',\gamma))^2 &= 0,
\end{align*}
which implies \eqref{eqn:E/E=PF} since $\phi_x=\phi_y$. For the double self-loop $(\gamma,\gamma')$ of the graphs $\mathcal{E}_8^c$ and $\mathcal{E}_{14}$, \eqref{eqn:cells-circ=[3]} yields the same result.
\end{proof}

Suppose that $\mathcal{M}_1$, $\mathcal{M}_2$ are two module categories with the same sets of objects. For each pair of adjacent vertices $x$, $y$ in $\mathcal{G}$, denote by $E^{(1)}_{x,y}$, $E^{(2)}_{x,y}$ the bilinear forms $E_{x,y}^{\cup}$ for $\mathcal{M}_1$, $\mathcal{M}_2$ respectively, and let $V_{x,y}^{(i)}:= \mathrm{Hom}_{\mathcal{M}_i} (\rho_1 \otimes_{\mathcal{M}_i} x,y)$, $i=1,2$. Denote by $\eta^{(x,y)} \in GL(\Delta_{\mathcal{G}}(x,y),\bbC)$ a linear transformation $\eta^{(x,y)}: V_{x,y}^{(1)} \to V_{x,y}^{(2)}$.
Then
\begin{equation} \label{eqn:equivalent-E}
E^{(2)}_{x,y}(a^{(2)}_{xy},a^{(2)}_{yx}) = \sum_{a^{(1)}_{xy},a^{(1)}_{yx}} \eta^{(x,y)}_{a^{(2)}_{xy},a^{(1)}_{xy}} \eta^{(y,x)}_{a^{(2)}_{yx},a^{(1)}_{yx}} E^{(1)}_{x,y}(a^{(1)}_{xy},a^{(1)}_{yx}),
\end{equation}
where $a^{(i)}_{xy}$ denote elements in the basis of $V_{x,y}^{(i)}$, $i=1,2$.
Note that the coefficient of $E^{(1)}_{x,y}(a^{(1)}_{xy},a^{(1)}_{yx})$ on the R.H.S. of \eqref{eqn:equivalent-E} only depend on $x$ and $y$ and is invariant under interchanging $x$ and $y$.

When $x \neq y$, there is at most one edge $a_{x,y}$ between $x$ and $y$, so that \eqref{eqn:equivalent-E} becomes
\begin{equation} \label{eqn:equivalent-E-2}
E^{(2)}_{x,y}(a^{(2)}_{xy},a^{(2)}_{yx}) = \eta^{(x,y)}_{a^{(2)}_{xy},a^{(1)}_{xy}} \eta^{(y,x)}_{a^{(2)}_{yx},a^{(1)}_{yx}} E^{(1)}_{x,y}(a^{(1)}_{xy},a^{(1)}_{yx}).
\end{equation}
Suppose $E^{(2)}_{x,y}(a^{(2)}_{xy},a^{(2)}_{yx}) = \sqrt{\phi_y}/\sqrt{\phi_x}$. With $\eta^{(x,y)}_{a^{(2)}_{xy},a^{(1)}_{xy}} = \sqrt{\phi_y}/(\eta^{(y,x)}_{a^{(2)}_{yx},a^{(1)}_{yx}} \sqrt{\phi_x} E^{(1)}_{x,y}(a^{(1)}_{xy},a^{(1)}_{yx}))$, we have from \eqref{eqn:equivalent-E-2} that
\begin{align*}
\eta^{(y,x)}_{a^{(2)}_{yx},a^{(1)}_{yx}} \eta^{(x,y)}_{a^{(2)}_{xy},a^{(1)}_{xy}} E^{(1)}_{y,x}(a^{(1)}_{yx},a^{(1)}_{xy})
&= \frac{\eta^{(y,x)}_{a^{(2)}_{yx},a^{(1)}_{yx}} \sqrt{\phi_y} E^{(1)}_{y,x}(a^{(1)}_{yx},a^{(1)}_{xy})}{\eta^{(y,x)}_{a^{(2)}_{yx},a^{(1)}_{yx}} \sqrt{\phi_x} E^{(1)}_{x,y}(a^{(1)}_{xy},a^{(1)}_{yx})} = \frac{\sqrt{\phi_y}}{\sqrt{\phi_x}}
\end{align*}
by Lemma \ref{Lemma:E/E=PF}.

When $x=y \neq s(\gamma)$ for $\gamma$ one of the double self-loops, \eqref{eqn:equivalent-E} gives
\begin{equation} \label{eqn:equivalent-E-3}
E^{(2)}_{x,x}(a^{(2)}_{xx},a^{(2)}_{xx}) = \left( \eta^{(x,x)}_{a^{(2)}_{xx},a^{(1)}_{xx}} \right)^2 E^{(1)}_{x,x}(a^{(1)}_{xx},a^{(1)}_{xx}).
\end{equation}
Suppose $E^{(2)}_{x,x}(a^{(2)}_{xx},a^{(2)}_{xx}) = 1$. Then with $\eta^{(x,x)}_{a^{(2)}_{xx},a^{(1)}_{xx}} = 1/\sqrt{E^{(1)}_{x,x}(a^{(1)}_{xx},a^{(1)}_{xx})}$, equation \eqref{eqn:equivalent-E-3} is satisfied.
When $x=y = s(\gamma)$, \eqref{eqn:equivalent-E} gives
\begin{equation} \label{eqn:equivalent-E-4}
E^{(2)}_{x,x}(a^{(2)}_{xx},b^{(2)}_{xx}) = \sum_{a^{(1)}_{xx},b^{(1)}_{xx}} \eta^{(x,x)}_{a^{(2)}_{xx},a^{(1)}_{xx}} \eta^{(x,x)}_{b^{(2)}_{xx},b^{(1)}_{xx}} E^{(1)}_{x,x}(a^{(1)}_{xx},b^{(1)}_{xx}).
\end{equation}
Suppose $E^{(2)}_{x,x}(a^{(2)}_{xx},b^{(2)}_{xx}) = \delta_{a^{(2)}_{xx},b^{(2)}_{xx}}$. Denote by $\{c_i,d_i\}$ a basis for $V_{x,x}^{(i)}$, $i=1,2$. Then the choice
\begin{align*}
\eta^{(x,x)}_{c_2,c_1} &= -\frac{\sqrt{E^{(1)}_{x,x}(c_1,c_1)}}{\sqrt{E^{(1)}_{x,x}(c_1,c_1)E^{(1)}_{x,x}(d_1,d_1)-E^{(1)}_{x,x}(c_1,d_1)^2}}, \\
\eta^{(x,x)}_{c_2,d_1} &= \frac{\sqrt{E^{(1)}_{x,x}(c_1,d_1)}}{\sqrt{E^{(1)}_{x,x}(c_1,c_1)E^{(1)}_{x,x}(d_1,d_1)-E^{(1)}_{x,x}(c_1,d_1)^2}}, \\
\eta^{(x,x)}_{d_2,c_1} &= 0, \\
\eta^{(x,x)}_{d_2,d_1} &= \frac{1}{\sqrt{E^{(1)}_{x,x}(c_1,c_1)}},
\end{align*}
satisfies \eqref{eqn:equivalent-E-4}. Note that Lemma \ref{Lemma:E/E=PF} implies that $E^{(1)}_{x,x}(c_1,d_1) = E^{(1)}_{x,x}(d_1,c_1)$.

Thus we can find linear transformations $\eta^{(x,y)}$ such that any module category $\mathcal{M}_2$ is equivalent to a canonical module category.
From now on we will only consider canonical module categories. For $a=a_{xy} \in V_{x,y}$, we will use the notation $\tilde{a}$ to denote the basis element $a_{y,x} \in V_{y,x}$ for $x \neq y$, whilst for $x=y$ we will identify $\tilde{a}=a$.
From \eqref{eqn:equivalent-E} we see that any two canonical module categories are equivalent only if there exists linear transformations $\eta^{(x,y)}$ such that
\begin{equation} \label{eqn:equivalent-E-canonical}
\eta^{(x,y)}_{a^{(2)}_{xy},a^{(1)}_{xy}} \eta^{(y,x)}_{a^{(2)}_{yx},a^{(1)}_{yx}} = 1 \quad (x \neq y), \qquad
\sum_{a^{(1)}_{xx},b^{(1)}_{xx}} \delta_{a^{(1)}_{xx},b^{(1)}_{xx}} \eta^{(x,x)}_{a^{(2)}_{xx},a^{(1)}_{xx}} \eta^{(x,x)}_{b^{(2)}_{xx},b^{(1)}_{xx}} = \delta_{a^{(2)}_{xx},b^{(2)}_{xx}},
\end{equation}
The second equation in \eqref{eqn:equivalent-E-canonical} yields that $\eta^{(x,x)}$ must be orthogonal.

\begin{Rem}
The choice of bilinear form for a canonical module category is consistent with the $\ast$-structure in $\mathcal{C}^m$, since for $a \in V_{x,y}$, $x \neq y$,
\begin{align*}
\left\langle \begin{array}{c} \includegraphics[width=10mm]{fig-annihilation} \\ a \quad\;\; \tilde{a} \end{array}, s(a) \right\rangle &= \frac{\sqrt{\phi_{r(a)}}}{\sqrt{\phi_{s(a)}}} = \left\langle a\tilde{a}, \phi_{s(a)} \sum_{b} \frac{\sqrt{\phi_{r(b)}}}{\sqrt{\phi_{s(b)}}} b\tilde{b} \right\rangle = \left\langle a\tilde{a}, \begin{array}{c} \includegraphics[width=10mm]{fig-creation} \\ s(a) \end{array} \right\rangle.
\end{align*}
\end{Rem}

\subsection{Trivalent cell systems} \label{sect:trivalent-cell-systems}

We now consider the action of a trivalent vertex. Let the operator associated to a trivalent vertex be given by
\begin{equation*} \label{eq:def:trivalent-vertex}
\begin{array}{c} \includegraphics[width=12mm]{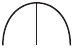} \\ a \quad b \quad c \end{array} = Y(a,b,c) \, e_{s(a)},
\end{equation*}
where the path $abc$ forms a closed loop of length 3 on $\mathcal{G}$ and $Y(a,b,c) \in \bbC$. By isotopy of strings and functoriality this is equal to
\begin{align*}
\begin{array}{l} \includegraphics[width=24mm]{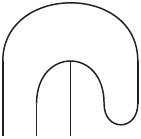} \\ a \quad b \quad c \end{array} \;\; &= \sum_{d:s(d)=r(c)} \frac{\sqrt{\phi_{r(d)}}}{\sqrt{\phi_{r(c)}}} \, \begin{array}{c} \includegraphics[width=24mm]{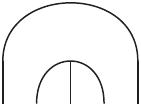} \\ a \quad b \quad c \quad d \quad \tilde{d} \end{array} \;\; = \sum_{d} \frac{\sqrt{\phi_{r(d)}}}{\sqrt{\phi_{r(c)}}} \, Y(b,c,d) \, \begin{array}{c} \includegraphics[width=6mm]{fig-annihilation} \\ a \quad \tilde{d} \end{array} \\
&= \sum_{d} \frac{\sqrt{\phi_{r(d)}}}{\sqrt{\phi_{r(c)}}} \, Y(b,c,d) \, \, \delta_{a,d} \, \frac{\sqrt{\phi_{r(a)}}}{\sqrt{\phi_{s(a)}}} = \frac{\phi_{r(a)}}{\phi_{s(a)}} Y(b,c,a).
\end{align*}
Thus the number $W(abc) := \phi_{s(a)} Y(a,b,c) = \phi_{s(b)} Y(b,c,a) = \phi_{s(c)} Y(c,a,b) \in \bbC$ does not depend on the cyclic permutation of $a,b,c$, i.e. it only depends on the closed loop $abc$ of length 3. We will call $W(abc)$ a trivalent cell, and the choice of such $W$ for each closed loop of length 3 a trivalent cell system.

The relations for the diagrammatic calculus yield relations for the cell system:
\begin{equation} \label{eqn:cell-relations-1}
\sum_{b,c} W(abc) W(\tilde{c}\tilde{b}\tilde{d}) = \delta_{a,d} \phi_{s(a)}\phi_{r(a)},
\end{equation}
\begin{equation} \label{eqn:cell-relations-2}
\sum_b \sqrt{\phi_{r(b)}} \, W(ab\tilde{b}) = 0,
\end{equation}
\begin{align}
\sqrt{\phi_{s(a)}\phi_{r(b)}} \sum_m W(am\tilde{c}) & W(b\tilde{m}\tilde{d}) - \sqrt{\phi_{r(a)}\phi_{r(c)}} \sum_m W(abm) W(\tilde{m}\tilde{d}\tilde{c}) \nonumber \\
&= \frac{[2]_q}{[4]_q} \left( \phi_{r(a)}\phi_{r(b)}\phi_{r(c)} \delta_{a,\tilde{b}} \delta_{c,\tilde{d}} - \phi_{s(a)}\phi_{r(a)}\phi_{r(b)} \delta_{a,c} \delta_{b,d} \right). \label{eqn:cell-relations-3}
\end{align}

Suppose that $\mathcal{M}_1$, $\mathcal{M}_2$ are two canonical module categories. Denote by $W_1$, $W_2$ the trivalent cell systems for $\mathcal{M}_1$, $\mathcal{M}_2$ respectively.
We now deduce the conditions on $W_1$, $W_2$ in order for $\mathcal{M}_2$ to be equivalent to $\mathcal{M}_1$. With linear transformations $\eta^{(x,y)}$ as in \eqref{eqn:equivalent-E-canonical}, the image of a closed loop $a_2 b_2 c_2$ of length 3 on $\mathcal{G}$ under the operator \includegraphics[width=10mm]{fig-trivalent_cap} in $\mathcal{M}_2$ transforms as follows:
\begin{align*}
\begin{array}{c} \includegraphics[width=12mm]{fig-trivalent_cap} \\ a_2 \;\; b_2 \;\; c_2 \end{array} &= \sum_{a_1,b_1,c_1} \eta^{(s(a_2),r(a_2))}_{a_2,a_1} \eta^{(s(b_2),r(b_2))}_{b_2,b_1} \eta^{(s(c_2),r(c_2))}_{c_2,c_1} \begin{array}{c} \includegraphics[width=12mm]{fig-trivalent_cap} \\ a_1 \;\; b_1 \;\; c_1 \end{array} \\
&= \sum_{a_1,b_1,c_1} \eta^{(s(a_2),r(a_2))}_{a_2,a_1} \eta^{(s(b_2),r(b_2))}_{b_2,b_1} \eta^{(s(c_2),r(c_2))}_{c_2,c_1} \phi_{s(a_1)} W_1(a_1,b_1,c_1) e_{s(a_1)}.
\end{align*}
Thus trivalent cell systems $W_1$, $W_2$ are equivalent if there exists linear transformations $\eta^{(x,y)}$ as in \eqref{eqn:equivalent-E-canonical} such that:
\begin{equation} \label{eqn:equivalent-W}
W_2(a_2,b_2,c_2) = \sum_{a_1,b_1,c_1} \eta^{(s(a_2),r(a_2))}_{a_2,a_1} \eta^{(s(b_2),r(b_2))}_{b_2,b_1} \eta^{(s(c_2),r(c_2))}_{c_2,c_1} W_1(a_1,b_1,c_1).
\end{equation}
For any closed loop of the form $a\tilde{a}b$ or $bbb$, where $b$ is a self-loop (but not $\gamma$ or $\gamma'$), this reduces to $W_1$ and $W_2$ differing by a sign.

\begin{Rem} \label{rem:equivalent-unitary-W}
If we consider $\ast$-module categories, that is, module categories compatible with the $\ast$-structure on $\mathcal{C}^m$ (see Remark \ref{rem:ast-structure-C}), then the identity
\begin{align*}
\phi_{s(a)} W(abc) &= \left\langle \begin{array}{c} \includegraphics[width=12mm]{fig-trivalent_cap} \\ a \quad b \quad c \end{array}, s(a) \right\rangle = \left\langle abc, \begin{array}{c} \includegraphics[width=12mm]{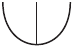} \\ s(a) \end{array} \right\rangle = \left\langle abc, \phi_{s(a)} \sum_{d,e,f} W(\tilde{f}\tilde{e}\tilde{d}) \, def \right\rangle \\
&= \phi_{s(a)} \overline{W(\tilde{c}\tilde{b}\tilde{a})}
\end{align*}
yields that $W(\tilde{c}\tilde{b}\tilde{a}) = \overline{W(abc)}$. In particular this implies that for any closed loop $\triangle$ of the form $a\tilde{a}b$ or $bbb$, where $b$ is a self-loop (but not $\gamma$ or $\gamma'$), that $W(\triangle) \in \bbR$. Combining this identity with \eqref{eqn:equivalent-W} yields that
\begin{equation} \label{eqn:eta_eta_eta}
\eta^{(r(a_2),s(a_2))}_{\tilde{a_2},\tilde{a_1}} \eta^{(r(b_2),s(b_2))}_{\tilde{b_2},\tilde{b_1}} \eta^{(r(c_2),s(c_2))}_{\tilde{c_2},\tilde{c_1}} = \overline{\eta^{(s(a_2),r(a_2))}_{a_2,a_1} \eta^{(s(b_2),r(b_2))}_{b_2,b_1} \eta^{(s(c_2),r(c_2))}_{c_2,c_1}},
\end{equation}
for any closed loop $a_i b_i c_i$ of length 3 on $\mathcal{G}$. Since $\eta^{(x,x)} \in \{ \pm 1 \}$ for any $x$ which is not the source vertex of the double edge $(\gamma,\gamma')$, \eqref{eqn:eta_eta_eta} is satisfied for any trivalent cell where at least one of the edges is a self-loop (but not $\gamma$ or $\gamma'$), using the first equation in \eqref{eqn:equivalent-E-canonical}. When at least one of the edges is $\gamma$ or $\gamma'$, \eqref{eqn:eta_eta_eta} yields that $\eta^{(s(\gamma),s(\gamma))}$ is unitary.
The only closed loops of length 3 which do not involve any self-loops are $[m,p_{\pm},p_{\mp}]$ on $\mathcal{A}_{2m}$; $[234]$, $[243]$ on $\mathcal{E}_8$; and $[234]$, $[243]$ on $\mathcal{E}_{14}^c$. For these closed paths we obtain the condition $\left| \eta^{(s(a_2),r(a_2))}_{a_2,a_1} \eta^{(s(b_2),r(b_2))}_{b_2,b_1} \eta^{(s(c_2),r(c_2))}_{c_2,c_1} \right| = 1$.
\end{Rem}

In the next section we determine equivalence classes of cell systems for the $SO(3)_{2m}$ nimrep graphs.
We will complete the classification of the action of generating diagrams by considering the action of the morphism $\mathfrak{t}$ defined by \eqref{eqn:diagrammatic_relations-T} in Section \ref{sect:T-cell-system}.

\subsection{Classification of trivalent cell systems for $SO(3)_{2m}$ nimrep graphs} \label{sect:classification-W}

In this section we find all equivalence classes of cell systems for the $SO(3)_{2m}$ nimrep graphs. In the proofs we will at times make implicit use of various quantum integer identities. For example, at level $2m$ we have $[2m+1+j] = [2m+1-j]$ for $j=1,\ldots,2m+1$. As a consequence of this, one can deduce other identities at specific levels. For example, at level 14 we have $[10][15] = 2[5][10]$ which implies that $[15]=2[5]$, from which many other identities can be deduced, e.g. $2[2][5]=[2][15]=2[14]$, so that $[14] = [2][5]$. The details of all quantum integer identities used are not provided explicitly in the proofs. The labels of the vertices in this section are as in Figure \ref{fig-SO3_nimrep_graphs}.

\begin{Thm} \label{thm:cell_system-A2m}
Any trivalent cell system in a canonical module category $\mathcal{M}$ with nimrep graph $\mathcal{A}_{2m}$ is equivalent to
\begin{align}
W_{l-1,l,l} &= \frac{\sqrt{[2][2l-1][2l+1][2l+2]}}{\sqrt{[4][2l]}}, \label{eqn:A2k-cell-W(l-1)ll} \\
W_{l,l,l} &= \frac{[4l+2]\sqrt{[2]}}{\sqrt{[4][2l][2l+2]}}, \\
W_{l,l,(l+1)} &= -\frac{\sqrt{[2][2l][2l+1][2l+3]}}{\sqrt{[4][2l+2]}}, \label{eqn:A2k-cell-Wll(l+1)}
\end{align}
for $l=1,2,\ldots,m-2$, and
\begin{align}
W_{m-2,m-1,m-1} &= \frac{\sqrt{[2][2m-3][2m-1][2m]}}{\sqrt{[4][2m-2]}}, \label{eqn:A2k-cell-W(k-1)kk} \\
W_{m-1,m-1,m-1} &= \frac{[4m-2]\sqrt{[2]}}{\sqrt{[4][2m-2][2m]}}, \label{eqn:A2k-cell-Wkkk} \\
W_{m-1,m-1,m_+} &= W_{m-1,m-1,m_-} = -\frac{\sqrt{[2][2m-2][2m-1][2m+1]}}{\sqrt{2[4][2m]}}, \\
W_{m-1,m_+,m_-} W_{m-1,m_-,m_+} &= \frac{[2m]^2}{[2]^2}.
\end{align}
If $\mathcal{M}$ is a $\ast$-module category, then any trivalent cell system is equivalent to the one given above, with $W_{m-1,m_+,m_-} = W_{m-1,m_-,m_+} = [2m]/[2]$.
\end{Thm}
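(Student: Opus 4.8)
The plan is to exploit the gauge freedom encoded in \eqref{eqn:equivalent-W} to reduce an arbitrary cell system to a normal form, and then to read off the gauge-invariant data from the defining relations \eqref{eqn:cell-relations-1}--\eqref{eqn:cell-relations-3}. First I would record how the transformations $\eta^{(x,y)}$ of \eqref{eqn:equivalent-E-canonical} act on each type of cell. Since $\mathcal{A}_{2m}$ has a single edge between any two distinct adjacent vertices, for $x \neq y$ the constraint $\eta^{(x,y)}\eta^{(y,x)}=1$ means rescaling the directed edge $x\to y$ by $\mu$ forces the rescaling of $y\to x$ by $\mu^{-1}$, while $\eta^{(x,x)}=\pm1$ on every self-loop. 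A direct bookkeeping then shows that each chain cell $W_{l-1,l,l}$, $W_{l,l,l}$, $W_{l,l,l+1}$ and each fork cell $W_{m-1,m-1,m_\pm}$ is invariant under all edge-rescalings (the two oppositely-oriented edges that occur always cancel) and changes only by the sign $\eta^{(l,l)}$ (respectively $\eta^{(m-1,m-1)}$), whereas the pair $W_{m-1,m_+,m_-}$, $W_{m-1,m_-,m_+}$ scales by $\mu$ and $\mu^{-1}$, so that their product is gauge invariant. Hence the magnitudes of the chain and $m_\pm$-fork cells, together with the product $W_{m-1,m_+,m_-}W_{m-1,m_-,m_+}$, are the only gauge-invariant quantities to be pinned down.

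Next I would set up the recursion along the chain $0,1,\dots,m-1$. The cap relation \eqref{eqn:cell-relations-2}, applied with the self-loop at vertex $l$, yields the linear relation $\sqrt{\phi_{l-1}}\,W_{l-1,l,l}+\sqrt{\phi_l}\,W_{l,l,l}+\sqrt{\phi_{l+1}}\,W_{l,l,l+1}=0$ among the three cells at $l$, and the normalisation \eqref{eqn:cell-relations-1} (with $a=d$ an edge, respectively a self-loop) supplies the quadratic constraints fixing the magnitudes. Using the Perron-Frobenius weights inherited from the quantum dimensions $[2j+1]_q$ on the chain vertices and $[2m+1]_q/2$ on $m_\pm$ (cf. \eqref{eqn:tr(fj)}), I would solve these equations inductively, anchoring the recursion at the boundary vertex $0$ (which carries no self-loop), and verify by the quantum-integer identities at level $2m$ that the closed forms \eqref{eqn:A2k-cell-W(l-1)ll}--\eqref{eqn:A2k-cell-Wll(l+1)} solve them, relation \eqref{eqn:cell-relations-3} then being checked to be consistent rather than to produce new constraints. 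The residual self-loop sign $\eta^{(l,l)}$ is fixed by the convention $W_{l,l,l}>0$, and the cap relation forces the displayed relative signs, in particular the minus sign in $W_{l,l,l+1}$.

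At the fork I would use the $\bbZ_2$ symmetry exchanging $m_+\leftrightarrow m_-$, together with \eqref{eqn:cell-relations-2} and \eqref{eqn:cell-relations-1}, to prove $W_{m-1,m-1,m_+}=W_{m-1,m-1,m_-}$ and to compute their common value, and then to extract the gauge-invariant product $W_{m-1,m_+,m_-}W_{m-1,m_-,m_+}=[2m]^2/[2]^2$ from the remaining instance of \eqref{eqn:cell-relations-1} (or \eqref{eqn:cell-relations-3}). I expect this fork computation to be the main obstacle: here $\mathcal{A}_{2m}$ is no longer a chain, so the relation \eqref{eqn:cell-relations-3} couples the self-loop cell at $m-1$, both edges $m-1\to m_\pm$, and the edge $m_+\to m_-$ simultaneously, and the equations no longer decouple into the clean recursive form available along the chain; closing up the quantum-integer expressions there requires the level-specific identities noted before the theorem.

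Finally, for the $\ast$-case I would invoke Remark \ref{rem:equivalent-unitary-W}: reversing the loop $(m-1)\to m_+\to m_-\to(m-1)$ produces $(m-1)\to m_-\to m_+\to(m-1)$, so the identity $W(\tilde c\tilde b\tilde a)=\overline{W(abc)}$ gives $W_{m-1,m_-,m_+}=\overline{W_{m-1,m_+,m_-}}$. Combined with the product $[2m]^2/[2]^2$ this yields $|W_{m-1,m_+,m_-}|=[2m]/[2]$, and the residual unit-modulus gauge freedom permitted for loops without self-loops in the $\ast$-setting (Remark \ref{rem:equivalent-unitary-W}) rotates both cross-cells to the positive real value $[2m]/[2]$, as claimed.
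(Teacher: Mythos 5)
Your gauge bookkeeping (which cells change only by self-loop signs, and that only the product $W_{m-1,m_+,m_-}W_{m-1,m_-,m_+}$ is a genuine invariant of the triangle through $m_\pm$) is correct and matches the uniqueness part of the paper's proof, and your $\ast$-case argument is essentially the paper's. But there is a genuine gap in the core of the argument: you assert that relations \eqref{eqn:cell-relations-1} and \eqref{eqn:cell-relations-2} fix the gauge-invariant magnitudes along the chain, with \eqref{eqn:cell-relations-3} ``checked to be consistent rather than to produce new constraints.'' That is exactly backwards, and it is where the work lies. At a chain vertex $l$, eliminating $W_{l,l,l+1}$ via \eqref{eqn:cell-relations-2} and substituting into \eqref{eqn:cell-relations-1} (with $a=d$ the self-loop) gives a quadratic in $W_{l,l,l}$ with \emph{two} roots, $W_{l,l,l} = \frac{[2l+1]}{\sqrt{[2][4][2l][2l+2]}}\left(-[2l-1] \pm [2l+3]\right)\varepsilon_l$. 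These have different absolute values, so the unwanted root is not gauge-equivalent to the stated one (gauge acts only by signs on these cells), and it is not excluded by any instance of \eqref{eqn:cell-relations-1} or \eqref{eqn:cell-relations-2}; in particular, in the non-$\ast$ setting (complex cells), which the first part of the theorem covers, nothing downstream in (1)--(2) ever contradicts it. The paper kills this branch precisely with \eqref{eqn:cell-relations-3} (taking $s(a)=l-1$, $r(a)=s(b)=l$, $r(b)=l+1$, $c=a$, $d=b$), which for the wrong root forces $[4l]/(\sqrt{[2]^3}\,[2l]^2) = -1$, impossible. Without this step your induction only verifies that the displayed cells \emph{are} a solution; it does not show that every solution is equivalent to it, which is the actual content of the theorem.

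The fork has the same problem. Relation \eqref{eqn:cell-relations-1} does give $W_{m-1,m_+,m_-}W_{m-1,m_-,m_+} = \phi_{m_+}\phi_{m_-} = [2m]^2/[2]^2$ and $W_{m-1,m-1,m_+}^2 = W_{m-1,m-1,m_-}^2$, but the \emph{ratio} $W_{m-1,m-1,m_+}/W_{m-1,m-1,m_-} = \pm 1$ is gauge-invariant (the self-loop sign at $m-1$ flips both cells simultaneously, and the edge rescalings cancel), so it must be computed, not normalised away. Your appeal to the $\bbZ_2$ symmetry exchanging $m_+ \leftrightarrow m_-$ does not do this: a given cell system need not be invariant under an automorphism of $\mathcal{G}$; pulling back along the automorphism merely produces another cell system and says nothing about the relative sign inside the original one. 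The paper settles it, again, with \eqref{eqn:cell-relations-3} ($s(a)=m_+$, $r(a)=s(b)=m-1$, $r(b)=m_-$, $c=a$, $d=b$), which yields $2W_{m-1,m-1,m_+}W_{m-1,m-1,m_-} = [6][2m-1][2m+1]/([3][4]) > 0$, forcing equal signs. The repair of your plan is therefore to promote \eqref{eqn:cell-relations-3} from a consistency check to the relation that (i) selects the branch of the quadratic at each chain vertex and (ii) fixes the relative sign at the fork; with that change your outline coincides with the paper's proof.
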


\begin{proof}
For $\mathcal{A}_{2m}$ we have Perron-Frobenius weights $\phi_l = [2l+1]$, $l=0,1,\ldots,m-1$, and $\phi_{m_{\pm}} = [2m]/[2] = [2m+1]/2$.
Suppose
\begin{equation} \label{eqn:W(l-1)ll-1}
W_{l-1,l,l} = \varepsilon_l \frac{\sqrt{[2][2l-1][2l+1][2l+2]}}{\sqrt{[4][2l]}},
\end{equation}
where $\varepsilon_l \in \{ \pm 1 \}$.
Relation (\ref{eqn:cell-relations-2}) with $r(a)=l$ yields
\begin{equation} \label{eqn:Wll(l+1)-1}
W_{l,l,l+1} = \frac{-1}{\sqrt{[2l+3]}} \left( \varepsilon_l \frac{[2l-1]\sqrt{[2][2l+1][2l+2]}}{\sqrt{[4][2l]}} + \sqrt{[2l+1]} W_{l,l,l} \right).
\end{equation}
Substituting this expression for $W_{l,l,l+1}$ in relation (\ref{eqn:cell-relations-1}) with $s(a)=r(a)=l$ and $d=a$ yields the quadratic equation
$$[2][2l][2l+2]\sqrt{[4]} W_{l,l,l}^2 + 2 \varepsilon_l [2l-1][2l+1] \sqrt{[2][2l][2l+2]} W_{l,l,l} - \varepsilon_l^2 [2l+1]^2[4l+2]\sqrt{[4]} = 0,$$
which has solutions
\begin{align*}
W_{l,l,l} &= \frac{[2l+1]}{\sqrt{[2][4][2l][2l+2]}} (-[2l-1] \pm [2l+3]) \varepsilon_l \\
&= \frac{[2l+1]}{\sqrt{[2][4][2l][2l+2]}} (-[2l-1] + \epsilon_l [2l+3]) \varepsilon_l = \begin{cases} \disp \frac{-[2l+1]^2\sqrt{[4]}}{\sqrt{[2]^3[2l][2l+2]}}\varepsilon_l & \text{ if } \epsilon_l=-1, \\ \disp \frac{[4l+2]\sqrt{[2]}}{\sqrt{[4][2l][2l+2]}}\varepsilon_l & \text{ if } \epsilon_l=1. \end{cases}
\end{align*}
Then from (\ref{eqn:Wll(l+1)-1}),
\begin{align*}
W_{l,l,l+1} &= \begin{cases} \disp \frac{-\varepsilon_l}{\sqrt{[2l+3]}} \left( \frac{[2l-1]\sqrt{[2][2l+1][2l+2]}}{\sqrt{[4][2l]}}-\frac{\sqrt{[4][2l+1]^5}}{\sqrt{[2]^3[2l][2l+2]}} \right) & \text{ if } \epsilon_l=-1, \\ \disp \frac{-\varepsilon_l}{\sqrt{[2l+3]}} \left( \frac{[2l-1]\sqrt{[2][2l+1][2l+2]}}{\sqrt{[4][2l]}}+\frac{[4l+2]\sqrt{[2][2l+1]}}{\sqrt{[4][2l][2l+2]}} \right) & \text{ if } \epsilon_l=1, \end{cases} \\
&= \begin{cases} \disp \frac{[4l]\sqrt{[2l+1][2l+3]}}{[2]\sqrt{[4][2l]^3[2l+2]}} \varepsilon_l & \text{ if } \epsilon_l=-1, \\ \disp  \frac{-\sqrt{[2][2l][2l+1][2l+3]}}{\sqrt{[4][2l+2]}} \varepsilon_l & \text{ if } \epsilon_l=1. \end{cases}
\end{align*}
We now consider relation (\ref{eqn:cell-relations-3}) with $s(a)=l-1$, $r(a)=s(b)=l$, $r(b)=l+1$ and $c=a$, $d=b$. If $\epsilon_l=-1$, \eqref{eqn:cell-relations-3}) gives
$$\frac{[2l-1][2l+1][2l+3][4l]}{[4][2l]^2\sqrt{[2]}} = -\frac{[2][2l-1][2l+1][2l+3]}{[4]} \quad \Rightarrow \quad \frac{[4l]}{\sqrt{[2]^3} [2l]^2} = -1,$$
which is not possible since the L.H.S. is positive.
On the other hand, for $\epsilon_l = 1$ relation (\ref{eqn:cell-relations-3}) is satisfied.
Finally, relation (\ref{eqn:cell-relations-1}) with $s(a)=l$, $r(a)=l+1$ and $d=a$ yields
$$W_{l,l+1,l+1}^2 = [2l+1][2l+3] - \frac{[2][2l][2l+1][2l+3]}{[4][2l+2]} = \frac{[2][2l+1][2l+3][2l+4]}{[4][2l+2]},$$
so that
$$W_{l,l+1,l+1} = \varepsilon_{l+1} \frac{\sqrt{[2][2l+1][2l+3][2l+4]}}{{[4][2l+2]}},$$
with $\varepsilon_{l+1} \in \{ \pm 1 \}$ since $W_{l,l+1,l+1}^2 > 0$.
For the base step, relation (\ref{eqn:cell-relations-1}) with $s(a)=1$, $r(a)=2$ and $d=a$ gives $W_{1,2,2}^2 = [3]$, which yields (\ref{eqn:W(l-1)ll-1}) when $l=1$.

From relation (\ref{eqn:cell-relations-3}) with $s(a)=m_+$, $r(a)=s(b)=m-1$, $r(b)=m_-$ and $c=a$, $d=b$, we obtain
\begin{align*}
2W_{m-1,m-1,m_+} W_{m-1,m-1,m_-} &= \frac{[6][2m-1][2m+1]}{[3][4]} \\
\Rightarrow \qquad W_{m-1,m-1,m_-} &= \frac{[6][2m-1][2m+1]}{2W_{m-1,m-1,m_+}[3][4]}.
\end{align*}
Substituting this expression for $W_{m-1,m-1,m_-}$ in relation (\ref{eqn:cell-relations-2}) with $r(a)=m-1$ yields
\begin{align*}
& W_{m-1,m-1,m_+}^2 + \varepsilon_m \frac{\sqrt{2[2][2m-2][2m-1][2m+1]}}{\sqrt{[4][2m]}} W_{m-1,m-1,m_+} + \frac{[6][2m-1][2m+1]}{2[3][4]} = 0 \\
&\Rightarrow \quad W_{m-1,m-1,m_+} = -\varepsilon_m \frac{\sqrt{[2][2m-2][2m-1][2m+1]}}{\sqrt{2[4][2m]}}.
\end{align*}
The same procedure for $W_{m-1,m-1,m_-}$ shows that $W_{m-1,m-1,m_-} = W_{m-1,m-1,m_+}$. Relation (\ref{eqn:cell-relations-3}) with $s(a)=m-2$, $r(a)=s(b)=r(b)=m-1$ and $c=a$, $d=b$, yields (\ref{eqn:A2k-cell-Wkkk}).
Finally, relation (\ref{eqn:cell-relations-1}) with $s(a)=m_+$, $r(a)=m_-$ and $d=a$ yields $W_{m-1,m_+,m_-} W_{m-1,m_-,m_+} = [2m]^2/[2]^2$.
In a $\ast$-module category we have $W_{m-1,m_+,m_-} = \overline{W_{m-1,m_-,m_+}}$, thus $W_{m-1,m_+,m_-} = \tau [2m]/[2]$ for $\tau \in \bbT$.

We now show uniqueness of the cell system. Since all cells apart from $W_{m-1,m_{\pm},m_{\mp}}$ are fixed up to sign, it is clear that they are equivalent.
If $W$, $\hat{W}$ are two cell systems with $\hat{W}_{m-1,m_{\pm},m_{\mp}} = \alpha^{\pm1} W_{m-1,m_{\pm},m_{\mp}}$, $\alpha \in \bbC^{\times}$, equivalence requires the existence of $\eta^{(m-1,m_+)}, \eta^{(m_+,m_-)}, \eta^{(m_-,m-1)} \in \bbC$ such that
$\eta^{(m-1,m_+)} \eta^{(m_+,m_-)} \eta^{(m_-,m-1)} = \alpha$,
so we may take $\eta^{(m_+,m_-)} = \eta^{(m_-,m-1)} = 1$ and $\eta^{(m-1,m_+)} = \alpha$.
To be a $\ast$-module category we have the restriction $\alpha \in \bbT$, and hence $\eta^{(m_+,m-1)} = (\eta^{(m-1,m_+)})^{-1} = \overline{\eta^{(m-1,m_+)}}$ as required by \eqref{eqn:eta_eta_eta}.
\end{proof}

\begin{Thm} \label{thm:cell_system-E8}
Any trivalent cell system in a canonical module category $\mathcal{M}$ with nimrep graph $\mathcal{E}_8$ is equivalent to
\begin{align*}
W_{1,2,2} &= \sqrt{[3]}, &
W_{2,2,2} &= \frac{[6]}{[4]}, &
W_{2,2,3} &= -\frac{\sqrt{[2][3]}}{\sqrt{[4]}}, \\
W_{2,2,4} &= -\frac{\sqrt{[2][3][6]}}{[4]}, &
W_{2,4,4} &= \frac{\sqrt{[3][6][10]}}{[4]\sqrt{[5]}}, &
W_{3,4,4} &= \frac{\sqrt{[6][10]}}{\sqrt{[2][4][5]}}, \\
W_{4,4,4} &= -\frac{[6]\sqrt{[10]}}{\sqrt{[2]^3[5]}}, & W_{2,3,4} W_{2,4,3} &= \frac{[3][6]}{[4]}, &
\end{align*}
If $\mathcal{M}$ is a $\ast$-module category, then any trivalent cell system is equivalent to the one given above, with $W_{2,3,4} = W_{2,4,3} = \sqrt{[3][6]}/\sqrt{[4]}$.
\end{Thm}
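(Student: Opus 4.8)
The plan is to follow the strategy of the proof of Theorem~\ref{thm:cell_system-A2m}, solving the cell relations \eqref{eqn:cell-relations-1}--\eqref{eqn:cell-relations-3} directly on the finite graph $\mathcal{E}_8$, which (as the even part of $E_7$) has vertices $1,2,3,4$ as in Figure~\ref{fig-SO3_nimrep_graphs}, with $1$ the degree-one vertex, self-loops at $2$ and $4$, and edges $1$--$2$, $2$--$3$, $2$--$4$, $3$--$4$ forming a triangle on $\{2,3,4\}$. First I would record the Perron--Frobenius weights from the eigenvector equation $\Delta_{\mathcal{E}_8}\phi=[3]\phi$, namely $\phi_1=[1]$, $\phi_2=[3]$, $\phi_3=[4]/[2]$ and $\phi_4=[6]/[2]$ (using $[2][5]=[4]+[6]$), since these feed into every subsequent relation. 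I would then anchor the computation at the tail: the diagonal case ($d=a$) of \eqref{eqn:cell-relations-1} with $s(a)=1$, $r(a)=2$ has a single contributing loop $1\to2\to2\to1$, giving $W_{1,2,2}^2=\phi_1\phi_2=[3]$, and since the overall sign is gauge we may take $W_{1,2,2}=\sqrt{[3]}$.

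Next I would propagate to the self-loop vertex $2$. Relation \eqref{eqn:cell-relations-2} applied to the self-loop at $2$ gives the linear constraint $\sqrt{\phi_2}\,W_{2,2,2}+\sqrt{\phi_1}\,W_{1,2,2}+\sqrt{\phi_3}\,W_{2,2,3}+\sqrt{\phi_4}\,W_{2,2,4}=0$, while the diagonal cases of \eqref{eqn:cell-relations-1} at the edges $(2,2)$, $(2,3)$, $(2,4)$, $(3,4)$ yield $W_{2,2,2}^2+W_{1,2,2}^2+W_{2,2,3}^2+W_{2,2,4}^2=\phi_2^2$, $W_{2,2,3}^2+W_{2,3,4}W_{2,4,3}=\phi_2\phi_3$, $W_{2,2,4}^2+W_{2,4,4}^2+W_{2,3,4}W_{2,4,3}=\phi_2\phi_4$, and $W_{3,4,4}^2+W_{2,3,4}W_{2,4,3}=\phi_3\phi_4$. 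As in the $\mathcal{A}_{2m}$ proof each self-loop cell ($W_{2,2,2}$ and $W_{4,4,4}$) satisfies a quadratic with two roots, parametrised by a sign $\epsilon=\pm1$; I would eliminate the spurious root by substituting it into \eqref{eqn:cell-relations-3} and reaching a contradiction in which a manifestly positive quantity is forced to equal a negative one. Together with \eqref{eqn:cell-relations-2} at the self-loop $4$ (giving $\sqrt{\phi_4}\,W_{4,4,4}+\sqrt{\phi_2}\,W_{2,4,4}+\sqrt{\phi_3}\,W_{3,4,4}=0$), this pins every cell down to an overall sign, except that \eqref{eqn:cell-relations-1} fixes only the product $W_{2,3,4}W_{2,4,3}=[3][6]/[4]$ and not the two factors individually.

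For uniqueness I would invoke the equivalence condition \eqref{eqn:equivalent-W} together with \eqref{eqn:equivalent-E-canonical}. Every cell meeting a self-loop is of the form $a\tilde a b$ or $bbb$ (in the notation following \eqref{eqn:equivalent-W}), so its residual sign ambiguity is absorbed by choosing $\eta^{(x,x)}=\pm1$ and reciprocal edge scalars, exactly as at the end of the proof of Theorem~\ref{thm:cell_system-A2m}. The one remaining degree of freedom $W_{2,3,4}\mapsto\alpha W_{2,3,4}$, $W_{2,4,3}\mapsto\alpha^{-1}W_{2,4,3}$ is realised by edge transformations $\eta^{(2,3)},\eta^{(3,4)},\eta^{(4,2)}$ around the triangle subject to $\eta^{(x,y)}\eta^{(y,x)}=1$, so all trivalent cell systems on $\mathcal{E}_8$ are equivalent. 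In the $\ast$-case the reality identity $W(\tilde c\tilde b\tilde a)=\overline{W(abc)}$ of Remark~\ref{rem:equivalent-unitary-W} forces $W_{2,4,3}=\overline{W_{2,3,4}}$, whence $|W_{2,3,4}|^2=[3][6]/[4]$, and the residual phase (now restricted to $\bbT$ and realised by a unitary $\eta$) may be chosen to give $W_{2,3,4}=W_{2,4,3}=\sqrt{[3][6]}/\sqrt{[4]}$.

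The main obstacle I expect is the coupled core $\{2,3,4\}$: because vertex $4$ carries both a self-loop and two triangle edges, the cells $W_{4,4,4}$, $W_{2,4,4}$, $W_{3,4,4}$, $W_{2,4,3}$ are entangled, so determining them requires solving the nonlinear system above rather than executing a clean one-step recursion, and checking that \eqref{eqn:cell-relations-1}--\eqref{eqn:cell-relations-3} are simultaneously consistent leans heavily on the level-$8$ quantum-integer identities $[9+j]=[9-j]$ (for instance $[10]=[8]$, $[11]=[7]$). Verifying that the spurious quadratic roots are genuinely excluded by \eqref{eqn:cell-relations-3}, rather than merely disfavoured, is the delicate point.
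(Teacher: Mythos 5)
Your setup agrees with the paper — the graph, the Perron--Frobenius weights $\phi_1=1$, $\phi_2=[3]$, $\phi_3=[4]/[2]$, $\phi_4=[6]/[2]$, the anchor $W_{1,2,2}^2=[3]$, the product equation $W_{2,3,4}W_{2,4,3}=[3][6]/[4]$, and the gauge/uniqueness discussion including the $\ast$-case — but the core of your solving strategy has a genuine gap. You claim that, as in Theorem \ref{thm:cell_system-A2m}, relations \eqref{eqn:cell-relations-1} and \eqref{eqn:cell-relations-2} pin each cell down to a sign, with \eqref{eqn:cell-relations-3} needed only to discard spurious quadratic roots. At vertex $2$ this fails: once $W_{1,2,2}$ is fixed, the only equations you list involving the three unknowns $W_{2,2,2},W_{2,2,3},W_{2,2,4}$ are the tadpole relation (linear) and the bigon relation at $(2,2)$ (a sphere), and a plane meets a sphere in three unknowns in a \emph{one-parameter family} (a conic), not in finitely many sign-related points. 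The $\mathcal{A}_{2m}$ recursion produces honest two-root quadratics only because every vertex there has exactly three neighbours, so induction leaves two unknowns for two equations; vertex $2$ of $\mathcal{E}_8$ has four (tail, self-loop, and both triangle vertices), and vertex $4$ causes the same problem. This is precisely why the paper's proof uses \eqref{eqn:cell-relations-3} as a \emph{determining} equation rather than a tiebreaker: with $s(a)=1$, $r(a)=s(b)=r(b)=2$, $c=a$, $d=b$ it is linear in $W_{2,2,2}$ and gives $W_{2,2,2}=\varepsilon_1[6]/[4]$ outright; the tadpole and bigon relations then collapse to a perfect-square quadratic with the single root $W_{2,2,3}=-\varepsilon_1\sqrt{[2][3]}/\sqrt{[4]}$; and likewise $W_{2,4,4}$ is obtained linearly from $W_{3,4,4}$ via \eqref{eqn:cell-relations-3}, and $W_{4,4,4}$ from \eqref{eqn:cell-relations-2}. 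Relation \eqref{eqn:cell-relations-3} is what makes the system triangular; deferring it breaks the derivation.

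One could try to rescue your route by adjoining the full vertex-$4$ system (including the bigon at $(4,4)$, which you omit): the vertex-$2$ conic is then cut by a quartic constraint coming from vertex $4$, so the solution set of relations \eqref{eqn:cell-relations-1}--\eqref{eqn:cell-relations-2} is at least finite. But it is then the intersection of a conic with a quartic — up to eight points which are not parametrised by signs — so your plan of eliminating the extras by "a manifestly positive quantity forced to equal a negative one" has nothing to bite on. Worse, the first assertion of the theorem concerns arbitrary canonical module categories, where the cells are a priori complex; positivity arguments are only legitimate in the paper because each cell is forced to be real (as the root of a real linear or perfect-square equation) before positivity is ever invoked, whereas the spurious intersection points in your scheme could be complex and would have to be excluded by checking \eqref{eqn:cell-relations-3} as algebraic identities. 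As written, your proposal stalls at vertex $2$, and fixing it essentially means importing the paper's use of \eqref{eqn:cell-relations-3} from the start.
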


\begin{proof}
The Perron-Frobenius weights for $\mathcal{E}_8$ are $\phi_1=1$, $\phi_2=[3]$, $\phi_3=[4]/[2]$, $\phi_4=[6]/[2]$.
Relation (\ref{eqn:cell-relations-1}) with $s(a)=1$, $r(a)=2$ and $d=a$ yields $W_{1,2,2}^2 = [3]$, thus $W_{1,2,2}=\varepsilon_1 \sqrt{[3]}$, $\varepsilon_1 \in \{ \pm 1 \}$.
From relation (\ref{eqn:cell-relations-3}) with $s(a)=1$, $r(a)=s(b)=r(b)=2$ and $c=a$, $d=b$, we obtain
$$\varepsilon_1 [3] W_{2,2,2} - [3]^2 = - \frac{[2][3]^2}{[4]} \qquad \Rightarrow \qquad W_{2,2,2} = \varepsilon_1 \frac{[6]}{[4]}.$$
Relation (\ref{eqn:cell-relations-2}) with $s(a)=2$ yields
$$\varepsilon_1 \sqrt{[3]} + \varepsilon_1 \frac{[6]\sqrt{[3]}}{[4]} + \frac{\sqrt{[4]}}{\sqrt{[2]}} W_{2,2,3} + \frac{\sqrt{[6]}}{\sqrt{[2]}} W_{2,2,4} = 0$$
so we can express $W_{2,2,4}$ in terms of $W_{2,2,3}$ as
\begin{equation} \label{eqn:W224-W223}
W_{2,2,4} = -\varepsilon_1 \frac{[5]\sqrt{[2]^3[3]}}{[4]\sqrt{[6]}} - \frac{\sqrt{[4]}}{\sqrt{[6]}} W_{2,2,3}.
\end{equation}
From (\ref{eqn:cell-relations-1}) with $s(a)=r(a)=2$ and $d=a$ we have
$$[3] + \frac{[6]^2}{[4]^2} + W_{2,2,3}^2 + W_{2,2,4}^2 = [3]^2.$$
Substituting in for $W_{2,2,4}$ from (\ref{eqn:W224-W223}) we obtain the quadratic equation
\begin{align*}
\frac{[2][5]}{[6]} W_{2,2,3}^2 + 2\varepsilon_1 \frac{[5]\sqrt{[2]^3[3]}}{[6]\sqrt{[4]}} W_{2,2,3} + \frac{[2]^2[3][5]}{[4][6]} &= 0, 
\end{align*}
i.e. $W_{2,2,3} = -\varepsilon_1 \sqrt{[2][3]}/\sqrt{[4]}$, and from (\ref{eqn:W224-W223}) we obtain $W_{2,2,4} = -\varepsilon_1 \sqrt{[2][3][6]}/[4]$.

Relation (\ref{eqn:cell-relations-1}) with $s(a)=2$, $r(a)=3$ and $d=a$ yields
$$\frac{[2][3]}{[4]} + W_{2,3,4} W_{2,4,3} = \frac{[3][4]}{[2]} \qquad \Rightarrow \qquad W_{2,3,4} W_{2,4,3} = \frac{[3][6]}{[4]}.$$
Relation (\ref{eqn:cell-relations-1}) with $s(a)=3$, $r(a)=4$ and $d=a$ yields
$$\frac{[3][6]}{[4]} + W_{3,4,4}^2 = \frac{[4][6]}{[2]^2} \qquad \Rightarrow \qquad W_{3,4,4} = \varepsilon_2 \frac{\sqrt{[6][10]}}{\sqrt{[2][4][5]}},$$
where $\varepsilon_2 \in \{ \pm 1 \}$.
From (\ref{eqn:cell-relations-3}) with $s(a)=2$, $r(a)=s(b)=4$, $r(b)=3$ and $c=a$, $d=b$, we obtain
$$\varepsilon_2 \frac{\sqrt{[3][6][10]}}{[2]\sqrt{[5]}} W_{2,4,4} - \frac{[3][6]^2}{[2][4]} = - \frac{[3][6]}{[2]} \qquad \Rightarrow \qquad W_{2,4,4} = \varepsilon_2 \frac{\sqrt{[3][6][10]}}{[4]\sqrt{[5]}}.$$
Finally, (\ref{eqn:cell-relations-2}) with $s(a)=4$ yields
$$\varepsilon_2 \frac{[3]\sqrt{[6][10]}}{[4]\sqrt{[5]}} + \varepsilon_2 \frac{\sqrt{[6][10]}}{[2]\sqrt{[5]}} + \frac{\sqrt{[6]}}{\sqrt{[2]}} W_{4,4,4} = 0,$$
which yields $W_{4,4,4} = - \varepsilon_2 [6]\sqrt{[10]}/\sqrt{[2]^3[5]}$.
Uniqueness follows as in the case of $\mathcal{A}_{2m}$.
\end{proof}

\begin{Thm} \label{thm:cell_system-E14c}
Any trivalent cell system in a canonical module category $\mathcal{M}$ with nimrep graph $\mathcal{E}_{14}^c$ is equivalent to
\begin{align*}
W_{1,1,1} &= -\frac{\sqrt{[4]}}{\sqrt{[2][3]}}, &
W_{1,1,2} &= \frac{1}{\sqrt{[3]}}, &
W_{1,2,2} &= \frac{\sqrt{[5]}}{\sqrt{[3]}}, \\
W_{2,2,2} &= \frac{[8]}{\sqrt{[2][3][4][5]}}, &
W_{2,2,3} &= -\frac{[3]\sqrt{[4]}}{[5]\sqrt{[2]}}, &
W_{2,2,4} &= -\frac{\sqrt{[2][3][4]}}{[5]}, \\
W_{2,4,4} &= \frac{\sqrt{[2][4]}}{\sqrt{[3][5]}}, &
W_{3,4,4} &= \frac{\sqrt{[2][4]}}{[5]}, &
W_{4,4,4} &= -\frac{\sqrt{[2][4]^3}}{[5]\sqrt{[3]}}, \\
W_{2,3,4} W_{2,4,3} &= \frac{[2]^2[3][4]^2}{[5]^3}.
\end{align*}
If $\mathcal{M}$ is a $\ast$-module category, then any trivalent cell system is equivalent to the one given above, with $W_{2,3,4} = W_{2,4,3} = [2][4]\sqrt{[3]}/\sqrt{[5]^3}$.
\end{Thm}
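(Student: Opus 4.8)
The plan is to proceed exactly as in the proofs of Theorems \ref{thm:cell_system-A2m} and \ref{thm:cell_system-E8}: I would first record the Perron--Frobenius weights $\phi_1,\phi_2,\phi_3,\phi_4$ of $\mathcal{E}_{14}^c$ (normalised so that $\phi_1=1$, giving $\phi_2=[4]/[2]$), which are fixed by the eigenvalue equation $\Delta_{\mathcal{G}}\phi=[3]\phi$ together with the level-$14$ quantum integer identities recorded at the start of the section, and then solve the cell relations \eqref{eqn:cell-relations-1}--\eqref{eqn:cell-relations-3} one closed loop at a time. At each step a single new cell is pinned down up to an overall sign, and the over-determined system is used to discard the spurious sign choices, just as the branch $\epsilon_l=-1$ was eliminated in the $\mathcal{A}_{2m}$ computation.

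Concretely, I would start at the vertex $1$ carrying the self-loop. Relation \eqref{eqn:cell-relations-1} with $s(a)=r(a)=1$ reads $W_{1,1,1}^2+W_{1,1,2}^2=\phi_1^2$, while \eqref{eqn:cell-relations-2} with $a$ the self-loop at $1$ gives the linear relation $\sqrt{\phi_1}\,W_{1,1,1}+\sqrt{\phi_2}\,W_{1,1,2}=0$; together these fix $W_{1,1,1}$ and $W_{1,1,2}$ up to a common sign $\varepsilon_1$, and \eqref{eqn:cell-relations-1} on the edge $1\to 2$, namely $W_{1,2,2}^2+W_{1,1,2}^2=\phi_1\phi_2$, then determines $W_{1,2,2}$. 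Moving to the self-loop at vertex $2$, relation \eqref{eqn:cell-relations-3} supplies $W_{2,2,2}$, relation \eqref{eqn:cell-relations-2} expresses $W_{2,2,4}$ in terms of $W_{2,2,3}$, and substituting into \eqref{eqn:cell-relations-1} with $s(a)=r(a)=2$ yields a quadratic determining $W_{2,2,3}$ and hence $W_{2,2,4}$. Finally \eqref{eqn:cell-relations-1} on the edges $2\to3$ and $3\to4$ produces the product $W_{2,3,4}W_{2,4,3}$ and the cell $W_{3,4,4}$ (up to a sign $\varepsilon_2$), while \eqref{eqn:cell-relations-3} and \eqref{eqn:cell-relations-2} at vertex $4$ determine $W_{2,4,4}$ and $W_{4,4,4}$, the spurious sign branches being discarded by the consistency check \eqref{eqn:cell-relations-3}.

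The triangle $[234]$ is the only closed loop of length three on $\mathcal{E}_{14}^c$ involving no self-loop (Remark \ref{rem:equivalent-unitary-W}), and the relations constrain it only through the product $W_{2,3,4}W_{2,4,3}=[2]^2[3][4]^2/[5]^3$, which is precisely the residual gauge freedom. For the $\ast$-case I would invoke the conjugation identity $W(\tilde c\tilde b\tilde a)=\overline{W(abc)}$ from Remark \ref{rem:equivalent-unitary-W}: this forces every cell on a loop carrying a self-loop edge to be real (consistent with the listed values), and for the triangle it gives $W_{2,3,4}=\overline{W_{2,4,3}}$, so that $W_{2,3,4}=\tau\,[2][4]\sqrt{[3]}/\sqrt{[5]^3}$ with $\tau\in\bbT$; the gauge transformations $\eta$ of \eqref{eqn:equivalent-W} absorb $\tau$ (taking $\eta^{(2,3)}=\tau$ and $\eta^{(3,4)}=\eta^{(4,2)}=1$) while respecting \eqref{eqn:eta_eta_eta}, giving the stated normalisation $W_{2,3,4}=W_{2,4,3}$. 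Uniqueness up to equivalence then follows verbatim from the final paragraph of the proof of Theorem \ref{thm:cell_system-A2m}: all cells bar the triangle pair are fixed up to sign, and a choice of $\eta^{(x,y)}\in\{\pm1\}$ satisfying \eqref{eqn:equivalent-E-canonical} realises the equivalence.

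The main obstacle I anticipate is purely computational: the quadratics and the consistency check \eqref{eqn:cell-relations-3} simplify into the closed forms above only after repeated use of the special level-$14$ identities (for instance $[15]=2[5]$ and its consequences such as $[14]=[2][5]$), and since the system \eqref{eqn:cell-relations-1}--\eqref{eqn:cell-relations-3} is heavily over-determined, the real work is verifying that the sign selected at each stage is globally consistent rather than merely locally admissible. The extra self-loop vertex $1$ (absent in $\mathcal{E}_8$) lengthens this bookkeeping but introduces no new structural difficulty.
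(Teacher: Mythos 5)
Your proposal is correct and follows essentially the same approach as the paper: solve the over-determined local relations \eqref{eqn:cell-relations-1}--\eqref{eqn:cell-relations-3} one closed loop at a time, pinning each cell down up to sign, with the triangle $[234]$ constrained only through the product $W_{2,3,4}W_{2,4,3}$, and with the $\ast$-structure identity $W(\tilde c\tilde b\tilde a)=\overline{W(abc)}$ plus the gauge transformations $\eta^{(x,y)}$ absorbing the residual phase exactly as in the $\mathcal{A}_{2m}$ uniqueness argument. The only notable deviation is at vertex $1$, where your use of the linear relation \eqref{eqn:cell-relations-2} forces $W_{1,1,1}=-\sqrt{[4]}\,W_{1,1,2}/\sqrt{[2]}$ outright and thereby bypasses the spurious branch ($W_{1,1,1}=0$, $W_{1,1,2}=\pm1$) which the paper obtains from relation \eqref{eqn:cell-relations-3} and only discards later via a consistency check involving $W_{2,2,3}$ --- a mild streamlining rather than a structurally different argument.
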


\begin{proof}
The Perron-Frobenius weights for $\mathcal{E}_{14}^c$ are $\phi_1=1$, $\phi_2=[4]/[2]$, $\phi_3=[3][4]/[2][5]$, $\phi_4=[2][4]/[5]$.
Relation (\ref{eqn:cell-relations-1}) with $s(a)=r(a)=1$ and $d=a$ yields $W_{1,1,1}^2 + W_{1,1,2}^2 = 1$, whilst from (\ref{eqn:cell-relations-3}) with $s(a)=r(a)=s(b)=1$, $r(b)=2$ and $c=a$, $d=b$, we obtain
$$\frac{\sqrt{[4]}}{\sqrt{[2]}} W_{1,1,1}W_{1,1,2} - W_{1,1,2}^2 = - 1.$$
Solving these two equations yields two possibilities:
$$W_{1,1,1} = \begin{cases} -\varepsilon_1 \frac{\sqrt{[4]}}{\sqrt{[2][3]}}, & \text{ Case I}, \\ 0, & \text{ Case II}, \end{cases} \qquad
W_{1,1,2} = \begin{cases} \frac{\varepsilon_1}{\sqrt{[3]}}, & \text{ Case I}, \\ \varepsilon_1, & \text{ Case II}. \end{cases}$$
From relation (\ref{eqn:cell-relations-1}) with $s(a)=1$, $r(a)=2$ and $d=a$ we obtain $W_{1,1,2}^2 + W_{1,2,2}^2 = [4]/[2]$, giving
$$W_{1,2,2} = \begin{cases} \varepsilon_2 \frac{\sqrt{[6]}}{\sqrt{[2][3]}}, & \text{ Case I}, \\ \varepsilon_2 \frac{\sqrt{[5]}}{\sqrt{[3]}}, & \text{ Case II}. \end{cases}$$
Relation (\ref{eqn:cell-relations-3}) with $s(a)=1$, $r(a)=s(b)=r(b)=2$ and $c=a$, $d=b$, gives
$$W_{1,2,2}W_{2,2,2} - \frac{\sqrt{[4]}}{\sqrt{[2]}} W_{1,2,2}^2 = - \frac{\sqrt{[4]}}{\sqrt{[2]}}, \qquad \Rightarrow \qquad
W_{2,2,2} = \begin{cases} \varepsilon_2 \frac{[8]}{\sqrt{[2][3][4][5]}}, & \text{ Case I}, \\ - \frac{\varepsilon_2}{[2]\sqrt{[4][5][6]}}, & \text{ Case II}, \end{cases}$$
since $[6]-[2][3]=-1/[4]$. From (\ref{eqn:cell-relations-3}) with $s(a)=1$, $r(a)=s(b)=2$, $r(b)=3$ and $c=a$, $d=b$, we get
$$W_{1,2,2} W_{2,2,3} = - \frac{\sqrt{[3][4]}}{\sqrt{[2][5]}} \qquad \Rightarrow \qquad
W_{2,2,3} = \begin{cases} -\varepsilon_2 \frac{[3]\sqrt{[4]}}{[5]\sqrt{[2]}}, & \text{ Case I}, \\ -\varepsilon_2 \frac{[3]\sqrt{[4]}}{\sqrt{[5][6]}}. & \text{ Case II}. \end{cases}$$
However, Case II is not consistent with relation (\ref{eqn:cell-relations-3}) with $s(a)=r(a)=s(b)=2$, $r(b)=3$ and $c=a$, $d=b$, thus only Case I is possible.
From (\ref{eqn:cell-relations-3}) with $s(a)=1$, $r(a)=s(b)=2$, $r(b)=4$ and $c=a$, $d=b$, we get
$$W_{1,2,2} W_{2,2,4} = - \frac{\sqrt{[2][4]}}{\sqrt{[5]}} \qquad \Rightarrow \qquad
W_{2,2,4} = -\varepsilon_2 \frac{\sqrt{[2][3][4]}}{[5]}.$$
Relation (\ref{eqn:cell-relations-3}) with $s(a)=3$, $r(a)=s(b)=2$, $r(b)=4$ and $c=a$, $d=b$, gives
$$\frac{[3]^2[4]^2}{[5]^3} - \frac{[4]}{[2]} \, W_{2,3,4}W_{2,4,3} = -\frac{[3][4]^2}{[5]^2} \qquad \Rightarrow \qquad W_{2,3,4}W_{2,4,3} = \frac{[2]^2[3][4]^2}{[5]^3}.$$
Relation (\ref{eqn:cell-relations-1}) with $s(a)=3$, $r(a)=4$ and $d=a$ yields
$$\frac{[2]^2[3][4]^2}{[5]^3} + W_{3,4,4}^2 = \frac{[3][4]^2}{[5]^2} \qquad \Rightarrow \qquad W_{3,4,4} = \varepsilon_3 \frac{\sqrt{[2][3][4][12]}}{\sqrt{[5]^3[6]}} = \varepsilon_3 \frac{\sqrt{[2][4]}}{[5]},$$
where the last equality follows from the relation $[3][12]=[5][6]$ when $q = \exp(i \pi/30)$. Then from (\ref{eqn:cell-relations-3}) with $s(a)=s(c)=r(c)=s(d)=4$, $r(a)=s(b)=2$, $r(b)=r(d)=3$, we have
$$W_{2,4,4} = \varepsilon_3 \frac{\sqrt{[2][4]}}{\sqrt{[3][5]}}.$$
Finally, (\ref{eqn:cell-relations-2}) with $s(a)=r(a)=4$ yields
$$\varepsilon_3 \frac{[4]}{\sqrt{[3][5]}} + \varepsilon_3 \frac{[4]\sqrt{[3]}}{\sqrt{[5]^3}} + \frac{\sqrt{[2][4]}}{\sqrt{[5]}} W_{4,4,4} = 0 \qquad \Rightarrow \qquad
W_{4,4,4} = -\varepsilon_3 \frac{\sqrt{[2][4]^3}}{[5]\sqrt{[3]}}.$$
Uniqueness follows as in the case of $\mathcal{A}_{2m}$.
\end{proof}

For the $SO(3)$ graphs $\sigma_{2m}$, $\mathcal{E}_8^c$ and $\mathcal{E}_{14}$ we only have results for $\ast$-module categories, where we have the identification $W(\tilde{c}\tilde{b}\tilde{a}) = \overline{W(abc)}$. Since all closed loops of length 3 on these graphs are of the form $a\tilde{a}b$ where $b$ is a self-loop (so $b = \tilde{b}$), any cell system must be real, since $W(a\tilde{a}b) = \overline{W(\tilde{b}a\tilde{a})} = \overline{W(ba\tilde{a})} = \overline{W(a\tilde{a}b)}$.

\begin{Thm} \label{thm:cell_system-sigma2m}
Any trivalent cell system in a canonical $\ast$-module category $\mathcal{M}$ with nimrep graph $\sigma_{2m}$ is equivalent to the cell system given by
\begin{align*}
W_{1,1,1} &= \frac{\sqrt{[4]}}{\sqrt{[2][3]}}, &
W_{1,1,2} &= -\frac{1}{\sqrt{[3]}}, \\
W_{(l-1),l,l} &= \frac{\sqrt{[2l-2][2l][2l+1]}}{\sqrt{[2][4][2l-1]}}, &
W_{l,l,l} &= \frac{[4l]}{\sqrt{[2][4][2l-1][2l+1]}}, \\
W_{l,l,(l+1)} &= -\frac{\sqrt{[2l-1][2l][2l+2]}}{\sqrt{[2][4][2l+1]}}, \\
W_{(m-1),m,m}^{\gamma} &= W_{(m-1),m,m}^{\gamma'} = \frac{[6]\sqrt{[2m]}^3}{2[2][3][4]\sqrt{[2m-2]}}, \\
W(\gamma,\gamma,\gamma) &= W(\gamma',\gamma',\gamma') = -\frac{[2][6]}{2[3][4][2m-2]}, &
W(\gamma,\gamma,\gamma') &=  W(\gamma,\gamma',\gamma') = -\frac{[2m]}{2[2]}.
\end{align*}
where $W_{(m-1),m,m}^{\eta}$ denotes the cell for the closed loop $(m-1)\to m \to m \to (m-1)$ which goes along the loop $\eta \in \{ \gamma, \gamma' \}$.
\end{Thm}

\begin{proof}
The cells $W_{1,1,2}$ and $W_{(l-1),l,l}$, $W_{l,l,l}$, $W_{l,l,(l+1)}$ for $l=2,3,\ldots, m-1$, can be determined in a similar way to those for $\mathcal{A}_{2m}$. Uniqueness of these cells also follows similarly.

For cells involving the double self-loop, we set $W_0 := W(\gamma,\gamma,\gamma)$, $W_1 := W(\gamma,\gamma,\gamma')$, $W_2 := W(\gamma,\gamma',\gamma')$ and $W_3 := W(\gamma',\gamma',\gamma')$, which are all real since $W(\tilde{c}\tilde{b}\tilde{a}) = \overline{W(abc)}$ in a $\ast$-module category and $\tilde{\gamma}=\gamma$, $\tilde{\gamma}'=\gamma'$.
From (\ref{eqn:cell-relations-2}) with $a=\gamma$ we obtain
\begin{equation} \label{eqn:Wm_gamma}
W_{(m-1),m,m}^{\gamma} = -\frac{\sqrt{[2m]}}{\sqrt{[2m-2]}} \, (W_0+W_2),
\end{equation}
and similarly with $a=\gamma'$ we obtain
\begin{equation} \label{eqn:Wm_gamma'}
W_{(m-1),m,m}^{\gamma'} = -\frac{\sqrt{[2m]}}{\sqrt{[2m-2]}} \, (W_1+W_3).
\end{equation}
From relation (\ref{eqn:cell-relations-1}) with $a=d=\gamma$, relation (\ref{eqn:cell-relations-1}) with $a=d=\gamma'$ and relation (\ref{eqn:cell-relations-3}) with $a=c=\gamma$ and $b=d=\gamma'$, we obtain that $W_i$ are any real solutions to the system of quadratic equations
\begin{align}
W_0^2 + 2\frac{[2m-2]}{[2][2m-1]} \, W_1^2 + W_2^2 + \frac{[2m+1]}{[2m-1]} \, W_0 W_2 &= \frac{[2m-2][2m]^2}{[2]^3[2m-1]}, \label{eqn:cell_equations-sigma2m-one_parameter-1} \\
W_1^2 + 2\frac{[2m-2]}{[2][2m-1]} \, W_2^2 + W_3^2 + \frac{[2m+1]}{[2m-1]} \, W_1 W_3 &= \frac{[2m-2][2m]^2}{[2]^3[2m-1]}, \\
W_1^2 + W_2^2 - W_0 W_2 - W_1 W_3 &= \frac{[2m]^2}{[2][4]}. \label{eqn:cell_equations-sigma2m-one_parameter-3}
\end{align}
All other relations (\ref{eqn:cell-relations-1})-(\ref{eqn:cell-relations-3}) involving $\gamma$, $\gamma'$ are satisfied when equations \eqref{eqn:cell_equations-sigma2m-one_parameter-1}-\eqref{eqn:cell_equations-sigma2m-one_parameter-3} are satisfied, and thus we have a one-parameter family of solutions.
We will first find all solutions where $W_2 = 0$. We will then show that these solutions are all equivalent, and that any other solution must also be equivalent to one of these. Finally we will deduce the solutions presented in the statement of the theorem.

Solving equations \eqref{eqn:cell_equations-sigma2m-one_parameter-1}-\eqref{eqn:cell_equations-sigma2m-one_parameter-3} with $W_2 = 0$ we obtain the solutions
\begin{equation} \label{soln:cell_system-sigma2m-W2=0}
W_0 = \frac{\epsilon_1 [2m]\sqrt{[6][2m-2]}}{[3]\sqrt{[2][4][2m-1]}}, \quad W_1 = \frac{\epsilon_2[2m]}{\sqrt{[2][3][4]}}, \quad W_3 = - \frac{\epsilon_2[2m-1]\sqrt{[2]}}{\sqrt{[3][4]}},
\end{equation}
where $\epsilon_1, \epsilon_2 \in \{ \pm 1 \}$.

We now turn to the equivalence of solutions for cells involving $\gamma$, $\gamma'$. Suppose $W$, $\widetilde{W}$ are two equivalent solutions, and denote by $u_{11} := \eta^{(m,m)}_{\gamma,\gamma}$, $u_{12} := \eta^{(m,m)}_{\gamma,\gamma'}$, $u_{21} := \eta^{(m,m)}_{\gamma',\gamma}$ and $u_{22} := \eta^{(m,m)}_{\gamma',\gamma'}$ the entries of the unitary, orthogonal linear transformation $\eta^{(m,m)}$ which gives this equivalence as in \eqref{eqn:equivalent-W}. Then from \eqref{eqn:equivalent-W} we have
\begin{align}
\widetilde{W}_{(m-1),m,m}^{\gamma} &= \eta^{(m-1,m)}\eta^{(m,m-1)} u_{11} W_{(m-1),m,m}^{\gamma} + \eta^{(m-1,m)}\eta^{(m,m-1)} u_{12} W_{(m-1),m,m}^{\gamma'} \nonumber \\
&= u_{11} W_{(m-1),m,m}^{\gamma} + u_{12} W_{(m-1),m,m}^{\gamma'}, \label{eqn:equivWijj-1} \\
\widetilde{W}_{(m-1),m,m}^{\gamma'} &= u_{21} W_{(m-1),m,m}^{\gamma} + u_{22} W_{(m-1),m,m}^{\gamma'}. \label{eqn:equivWijj-2}
\end{align}
Then from \eqref{eqn:Wm_gamma} and \eqref{eqn:Wm_gamma'} we obtain the following equations involving the $W_i$, $\widetilde{W}_i$:
\begin{align}
\widetilde{W}_0 + \widetilde{W}_2 &= u_{11} (W_0+W_2) + u_{12} (W_1+W_3), \label{eqn:equivalence-Wm_gamma} \\
\widetilde{W}_1 + \widetilde{W}_3 &= u_{21} (W_0+W_2) + u_{22} (W_1+W_3).
\end{align}
Equivalence between the $W_i$, $\widetilde{W}_i$ gives
\begin{align}
\widetilde{W}_0 &= u_{11}^3 W_0 + 3u_{11}^2 u_{12} W_1 + 3u_{11} u_{12}^2 W_2 + u_{12}^3 W_3, \label{eqn:equivalence-W0} \\
\widetilde{W}_1 &= u_{11}^2 u_{21} W_0 + (u_{11}^2 u_{22} + 2u_{11} u_{12}u_{21}) W_1 + (u_{12}^2 u_{21} + 2u_{11}u_{12}u_{22}) W_2 + u_{12}^2 u_{22} W_3, \\
\widetilde{W}_2 &= u_{11} u_{21}^2 W_0 + (u_{12} u_{21}^2 + 2u_{11} u_{21}u_{22}) W_1 + (u_{11} u_{22}^2 + 2u_{12}u_{21}u_{22}) W_2 + u_{12} u_{22}^2 W_3, \label{eqn:equivalence-W2} \\
\widetilde{W}_3 &= u_{21}^3 W_0 + 3u_{21}^2 u_{22} W_1 + 3u_{21} u_{22}^2 W_2 + u_{22}^3 W_3. \label{eqn:equivalence-W3}
\end{align}

Then it is easy to see that any solution for which $W_2=0$, i.e. any solution given by \eqref{soln:cell_system-sigma2m-W2=0} with $\epsilon_1=\epsilon_1'$, $\epsilon_2=\epsilon_2'$, for some $\epsilon_1', \epsilon_2' \in \{ \pm 1 \}$, is equivalent to the solution with $\epsilon_1=\epsilon_2=1$ -- that is, equations \eqref{eqn:equivalence-Wm_gamma}-\eqref{eqn:equivalence-W3} are satisfied -- by choosing $u_{11}=\epsilon_1'$, $u_{22}=\epsilon_2'$ and $u_{12}=u_{21}=0$.

We now show that any other solution $(\widetilde{W}_0,\widetilde{W}_1,\widetilde{W}_2,\widetilde{W}_3)$ is equivalent to the solution $(W_0,W_1,0,W_3)$ above with $\varepsilon_1=\varepsilon_2=1$. Let $\eta^{(m,m)} = u^{\theta}$ be the unitary given by setting $u_{11} = u_{22} = \cos \theta$, $u_{12} = \sin \theta$ and $u_{21} = -\sin \theta$, and let $(W_0^{\theta},W_1^{\theta},W_2^{\theta},W_3^{\theta})$ be the solution equivalent to $(\widetilde{W}_0,\widetilde{W}_1,\widetilde{W}_2,\widetilde{W}_3)$ obtained by using the unitary $u^{\theta}$ in \eqref{eqn:equivalent-W}. We thus have a continuous family of solutions $(W_0^{\theta},W_1^{\theta},W_2^{\theta},W_3^{\theta})$ for $\theta \in [0,2\pi)$. We show that there exists a choice of $\theta$ such that $W_2^{\theta} = 0$.
Now \eqref{eqn:equivalent-W} yields
$$W_2^{\theta} = \cos \theta \sin^2 \theta \widetilde{W}_0 + (\sin^3 \theta -2 \cos^2 \theta \sin \theta)\widetilde{W}_1 + (\cos^3 \theta-2\cos \theta \sin^2 \theta) \widetilde{W}_2 + \cos^2 \theta \sin \theta \widetilde{W}_3,$$
so that $W_2^0 = \widetilde{W}_2$, whilst $W_2^{\pi} = -\widetilde{W}_2$. Then by the intermediate value theorem there exists $\theta \in [0,\pi]$ such that $W_2^{\theta} = 0$. Thus the equivalent solution $(W_0^{\theta},W_1^{\theta},W_2^{\theta},W_3^{\theta})$ is one of the equivalent solutions given by (\ref{soln:cell_system-sigma2m-W2=0}).

Since all solutions are equivalent, we may seek one with a symmetry between the self-loops $\gamma$ and $\gamma'$ such that $W(\gamma,\gamma,\gamma) = W(\gamma',\gamma',\gamma')$ and $W(\gamma,\gamma,\gamma') =  W(\gamma,\gamma',\gamma')$. Solving \eqref{eqn:cell_equations-sigma2m-one_parameter-1}-\eqref{eqn:cell_equations-sigma2m-one_parameter-3} with $W_0=W_3$, $W_1=W_2$, one such solution is
\begin{align*}
W_{(m-1),m,m}^{\gamma} &= W_{(m-1),m,m}^{\gamma'} = \frac{[6]\sqrt{[2m]}^3}{2[2][3][4]\sqrt{[2m-2]}}, \\
W(\gamma,\gamma,\gamma) &= W(\gamma',\gamma',\gamma') = -\frac{[2][6]}{2[3][4][2m-2]}, \\
W(\gamma,\gamma,\gamma') &=  W(\gamma,\gamma',\gamma') = -\frac{[2m]}{2[2]}.
\end{align*}
\end{proof}

\begin{Rem} \label{Rem:conjecture-unique-sigma2m}
We conjecture that the trivalent cell system in any canonical module category with nimrep graph $\sigma_{2m}$ is equivalent to the one presented in Theorem \ref{thm:cell_system-sigma2m}. We have that $W_i$, $i=0,1,2,3$, are now any complex solutions to \eqref{eqn:cell_equations-sigma2m-one_parameter-1}-\eqref{eqn:cell_equations-sigma2m-one_parameter-3}. As in the proof of Theorem \ref{thm:cell_system-sigma2m}, we can construct a continuous family of equivalent solutions $(W_0^{\theta},W_1^{\theta},W_2^{\theta},W_3^{\theta})$ for $\theta \in [0,2\pi)$, where $W_2^0=\alpha \in \bbC$ and $W_2^{\pi} = -\alpha \in \bbC$. Thus by the intermediate value theorem any solution for the $W_i$ is equivalent to a solution where $W_2 \in \bbR$. If $W_2 = 0$ we have the solution given in \eqref{soln:cell_system-sigma2m-W2=0}, since solving the equations \eqref{eqn:cell_equations-sigma2m-one_parameter-1}-\eqref{eqn:cell_equations-sigma2m-one_parameter-3} for $W_2=0$ does not require any assumptions about $W_i \in \bbR$. If we now assume $0 \neq W_2 \in \bbR$, the imaginary parts of equations \eqref{eqn:cell_equations-sigma2m-one_parameter-1}-\eqref{eqn:cell_equations-sigma2m-one_parameter-3} yield the following possibilities only: (1) the $W_i$ are all real, which is the case considered in the proof of Theorem \ref{thm:cell_system-sigma2m}; (2) $W_0,W_2\in\bbR$ and $W_1,W_3\in i \bbR$; (3) $W_0 \in \bbC$ with $\text{Re}(W_0) =-W_2 [2m+1]/(2[2m-1])$, $W_3 \in \bbC$ with $\text{Re}(W_3) \neq 0$ and $\text{Im}(W_3) = \text{Re}(W_0) \text{Im}(W_1)/W_2$, and $W_1 = -i W_2 \text{Im}(W_0)/\text{Re}(W_3)$; or (4),
\begin{align*}
\text{Re}(W_0) &= W_2 \frac{[2](\text{Re}(W_3)^2 -3 \text{Re}(W_1)^2) - 2[4] \text{Re}(W_1) \text{Re}(W_3)}{[2]^3\text{Re}(W_1)^2 + [4]\text{Re}(W_3) (2\text{Re}(W_1) - \text{Re}(W_3))}, \\
\text{Im}(W_1) &= W_2 \text{Im}(W_0) \frac{[2]\text{Re}(W_1) + [4]\text{Re}(W_3)}{[2]^3\text{Re}(W_1)^2 + [4] \text{Re}(W_3) (2\text{Re}(W_1) - \text{Re}(W_3))}, \\
\text{Im}(W_3) &= 2 \text{Im}(W_1) - \frac{W_2 \text{Im}(W_0) + \text{Re}(W_3) \text{Im}(W_1)}{\text{Re}(W_1)},
\end{align*}
where the denominators on the R.H.S. are all non-zero, and $\text{Im}(W_0)$, $\text{Re}(W_1)$, $W_2$ and $\text{Re}(W_3)$ are otherwise arbitrary. Cases (2) and (3) can be shown explicitly not to give any solutions to the full equations \eqref{eqn:cell_equations-sigma2m-one_parameter-1}-\eqref{eqn:cell_equations-sigma2m-one_parameter-3}. For case (4), numerical evidence for $m=2,20,50$ suggests that no solutions exist for any value of $m$, but we have not been able to verify this.
\end{Rem}

\begin{Thm} \label{thm:cell_system-E8c}
Any trivalent cell system in a canonical $\ast$-module category $\mathcal{M}$ with nimrep graph $\mathcal{E}_8^c$ is equivalent to either the cell system $W$ given by
\begin{align*}
W_{1,2,2}^{\gamma} &=0, &
W_{1,2,2}^{\gamma'} &=\sqrt{[3]}, \\
W_{2,2,3}^{\gamma} &= \frac{[5]}{[3]}, &
W_{2,2,3}^{\gamma'} &= -\frac{[5]\sqrt{[2]}}{[3]\sqrt{[4]}}, \\
W(\gamma,\gamma,\gamma) &= -\frac{[5]\sqrt{[2]}}{[3]\sqrt{[4]}}, &
W(\gamma,\gamma,\gamma') &= -\frac{[5]}{[3]}, \\
W(\gamma,\gamma',\gamma') &= 0, &
W(\gamma',\gamma',\gamma') &= \frac{[6]}{[4]}, \\
W_{2,3,3} &= \frac{[2]\sqrt{[3]}}{[4]}, &
W_{3,3,3} &= -\frac{\sqrt{[2][3]}}{[4]},
\end{align*}
or the cell system $\widehat{W}$ given by
\begin{align*}
\widehat{W}_{1,2,2}^{\gamma} &=\frac{\sqrt{[2][5]}}{\sqrt{[4]}}, &
\widehat{W}_{1,2,2}^{\gamma'} &=\frac{\sqrt{[2]}}{\sqrt{[4]}}, \\
\widehat{W}_{2,2,3}^{\gamma} &= -\frac{\sqrt{[2]^3[6]}}{[4]}, &
\widehat{W}_{2,2,3}^{\gamma'} &= \frac{[2]\sqrt{[3]}}{[4]}, \\
\widehat{W}(\gamma,\gamma,\gamma) &= \frac{[2]}{[4]\sqrt{[3]}}, &
\widehat{W}(\gamma,\gamma,\gamma') &= \frac{\sqrt{[5]}}{\sqrt{[3]}}, \\
\widehat{W}(\gamma,\gamma',\gamma') &= 0, &
\widehat{W}(\gamma',\gamma',\gamma') &= -\frac{\sqrt{[3][4]}}{\sqrt{[2]}}, \\
\widehat{W}_{2,3,3} &= \frac{[2]\sqrt{[3]}}{[4]}, &
\widehat{W}_{3,3,3} &= -\frac{\sqrt{[2][3]}}{[4]},
\end{align*}
where $W_{i,2,2}^{\eta}$, $\widehat{W}_{i,2,2}^{\eta}$ denote the cell for the closed loop $i\to 2 \to 2 \to i$ ($i=1,3$) which goes along the loop $\eta \in \{ \gamma, \gamma' \}$.
The cell systems $W$ and $\widehat{W}$ are inequivalent.
\end{Thm}

\begin{proof}
The Perron-Frobenius weights for $\mathcal{E}_8^c$ are $\phi_1=1$, $\phi_2=[3]$, $\phi_3=[2][3]/[4]$.
Relation (\ref{eqn:cell-relations-2}) with $s(a)=r(a)=3$ yields
$$\sqrt{[3]} W_{2,3,3} + \frac{\sqrt{[2][3]}}{\sqrt{[4]}} W_{3,3,3} = 0 \qquad \Rightarrow \qquad W_{3,3,3} = -\frac{\sqrt{[4]}}{\sqrt{[2]}} W_{2,3,3}.$$
Then relation (\ref{eqn:cell-relations-1}) with $s(a)=r(a)=3$ and $d=a$ gives
$$W_{2,3,3}^2 + W_{3,3,3}^2 = \frac{[2]^2[3]^2}{[4]^2} \qquad \Rightarrow \qquad W_{2,3,3} = \varepsilon_3 \frac{[2]\sqrt{[3]}}{[4]}, \quad W_{3,3,3} = -\varepsilon_3 \frac{\sqrt{[2][3]}}{\sqrt{[4]}},$$
where $\varepsilon_3 \in \{ \pm 1 \}$.

For the remaining cells, we obtain the following equations, where we set $W_0:=W(\gamma,\gamma,\gamma)$, $W_1:=W(\gamma,\gamma,\gamma')$, $W_2:=W(\gamma,\gamma',\gamma')$ and $W_3:=W(\gamma',\gamma',\gamma')$.
From relation (\ref{eqn:cell-relations-1}) with $s(a)=1$, $r(a)=2$ and $d=a$ we obtain
\begin{equation} \label{eqn:cells_E8c-1}
(W_{1,2,2}^{\gamma})^2 + (W_{1,2,2}^{\gamma'})^2 = [3],
\end{equation}
and similarly with $s(a)=2$, $r(a)=3$ and $d=a$ we obtain
\begin{equation}
(W_{2,2,3}^{\gamma})^2 + (W_{2,2,3}^{\gamma'})^2 = \frac{\sqrt{[2]^3[3]^3}}{\sqrt{[4]^3}}.
\end{equation}
Relation (\ref{eqn:cell-relations-1}) with $a=d=\gamma$ yields
\begin{equation}
(W_{1,2,2}^{\gamma})^2 + W_0^2 + 2W_1^2 + W_2^2 + (W_{2,2,3}^{\gamma})^2 = [3]^2,
\end{equation}
and similarly with $a=d=\gamma'$ we obtain
\begin{equation}
(W_{1,2,2}^{\gamma'})^2 + W_1^2 + 2W_2^2 + W_3^2 + (W_{2,2,3}^{\gamma'})^2 = [3]^2,
\end{equation}
whilst $a=\gamma$, $d=\gamma'$ yields
\begin{align}
W_{1,2,2}^{\gamma} W_{1,2,2}^{\gamma'} + W_0 W_1 + 2W_1 W_2 + W_2 W_3 + W_{2,2,3}^{\gamma} W_{2,2,3}^{\gamma'} = 0.
\end{align}
Relation (\ref{eqn:cell-relations-2}) with $a=\gamma$ yields
\begin{equation}
W_{1,2,2}^{\gamma} + \sqrt{[3]} W_0 + \sqrt{[3]} W_2 + \frac{\sqrt{[2][3]}}{\sqrt{[4]}} W_{2,2,3}^{\gamma} = 0,
\end{equation}
and similarly with $a=\gamma'$ yields
\begin{equation}
W_{1,2,2}^{\gamma'} + \sqrt{[3]} W_1 + \sqrt{[3]} W_3 + \frac{\sqrt{[2][3]}}{\sqrt{[4]}} W_{2,2,3}^{\gamma'} = 0.
\end{equation}
Relation (\ref{eqn:cell-relations-3}) with $s(a)=1$, $r(a)=s(b)=2$, $r(b)=3$ and $c=a$, $d=b$ yields
\begin{equation}
W_{1,2,2}^{\gamma} W_{2,2,3}^{\gamma} + W_{1,2,2}^{\gamma'} W_{2,2,3}^{\gamma'} = -\frac{\sqrt{[2]^3[3]^3}}{\sqrt{[4]^3}}.
\end{equation}
Relation (\ref{eqn:cell-relations-3}) with $s(a)=1$, $r(a)=2$, $b=\gamma$, $c=a$, and $d=b$ yields
\begin{equation}
W_{1,2,2}^{\gamma} W_0 + W_{1,2,2}^{\gamma'} W_1 - \sqrt{[3]} (W_{1,2,2}^{\gamma})^2 = -\frac{[2]\sqrt{[3]^3}}{[4]},
\end{equation}
and similarly, with $s(a)=1$, $r(a)=2$, $b=\gamma'$, $c=a$, and $d=b$ we have
\begin{equation}
W_{1,2,2}^{\gamma} W_2 + W_{1,2,2}^{\gamma'} W_3 - \sqrt{[3]} (W_{1,2,2}^{\gamma'})^2 = -\frac{[2]\sqrt{[3]^3}}{[4]}.
\end{equation}
Similarly, with $s(a)=3$ and $r(a)=2$ we obtain
\begin{align}
W_{2,2,3}^{\gamma} W_0 + W_{2,2,3}^{\gamma'} W_1 - \frac{\sqrt{[4]}}{\sqrt{[2]}} (W_{2,2,3}^{\gamma})^2 &= -\frac{[3]^2\sqrt{[2]^3}}{\sqrt{[4]^3}}, \\
W_{2,2,3}^{\gamma} W_2 + W_{2,2,3}^{\gamma'} W_3 - \frac{\sqrt{[4]}}{\sqrt{[2]}} (W_{2,2,3}^{\gamma'})^2 &= -\frac{[3]^2\sqrt{[2]^3}}{\sqrt{[4]^3}}.
\end{align}
Finally, relation (\ref{eqn:cell-relations-3}) with $s(a)=1$, $r(a)=2$, $c=a$, $b=\gamma$ and $d=\gamma'$ yields
\begin{equation}
W_{1,2,2}^{\gamma} W_1 + W_{1,2,2}^{\gamma'} W_2 - \sqrt{[3]} W_{1,2,2}^{\gamma} W_{1,2,2}^{\gamma'} = 0,
\end{equation}
and similarly, with $s(a)=3$, $r(a)=2$, $c=a$, $b=\gamma$ and $d=\gamma'$ yields
\begin{equation} \label{eqn:cells_E8c-14}
W_{2,2,3}^{\gamma} W_1 + W_{2,2,3}^{\gamma'} W_2 - \frac{\sqrt{[4]}}{\sqrt{[2]}} W_{2,2,3}^{\gamma} W_{2,2,3}^{\gamma'} = 0.
\end{equation}

Just as in the proof of Theorem \ref{thm:cell_system-sigma2m}, the cells $W_0, W_1, W_2, W_3$ in any equivalent cell system satisfies equations \eqref{eqn:equivalence-W0}-\eqref{eqn:equivalence-W3}, and by the same argument used in that proof, we see that any cell system is therefore equivalent to a cell system where $W_2 = 0$.
Solving equations \eqref{eqn:cells_E8c-1}-\eqref{eqn:cells_E8c-14} with $W_2=0$, we obtain the following solutions:
\begin{align*}
W_{1,2,2}^{\gamma} &= 0, & W_{1,2,2}^{\gamma'} &= \epsilon_1 \sqrt{[3]}, & W_{2,2,3}^{\gamma} &= -\epsilon_2 \frac{[5]}{[3]}, & W_{2,2,3}^{\gamma'} &= -\epsilon_1 \frac{[5]\sqrt{[2]}}{[3]\sqrt{[4]}}, \\
W_0 &= \epsilon_2 \frac{[5]\sqrt{[2]}}{[3]\sqrt{[4]}}, & W_1 &= -\epsilon_1 \frac{[5]}{[3]}, & W_2 &= 0, & W_3 &= \epsilon_1 \frac{[6]}{[4]},
\end{align*}
where $\epsilon_1, \epsilon_2 \in \{ \pm 1 \}$, or
\begin{align*}
\widetilde{W}_{1,2,2}^{\gamma} &= -\epsilon_1 \frac{[3]}{\sqrt{[5]}}, & \widetilde{W}_{1,2,2}^{\gamma'} &= -\epsilon_2 , & \widetilde{W}_{2,2,3}^{\gamma} &= 0, & \widetilde{W}_{2,2,3}^{\gamma'} &= \epsilon_2 \frac{\sqrt{[5]^3}}{\sqrt{[3]^3}}, \\
\widetilde{W}_0 &= \epsilon_1 \frac{\sqrt{[3]}}{\sqrt{[5]}}, & \widetilde{W}_1 &= -\epsilon_2 \sqrt{[3]}, & \widetilde{W}_2 &= 0, & \widetilde{W}_3 &= \epsilon_2 \frac{[2]\sqrt{[3]}}{[4]},
\end{align*}
where $\epsilon_1, \epsilon_2 \in \{ \pm 1 \}$, or
\begin{align*}
\widehat{W}_{1,2,2}^{\gamma} &= \epsilon_2 \frac{\sqrt{[2][5]}}{\sqrt{[4]}}, & \widehat{W}_{1,2,2}^{\gamma'} &= \epsilon_1 \frac{\sqrt{[2]}}{\sqrt{[4]}}, & \widehat{W}_{2,2,3}^{\gamma} &= -\epsilon_2 \frac{\sqrt{[2]^3[6]}}{[4]}, & \widehat{W}_{2,2,3}^{\gamma'} &= \epsilon_1 \frac{[2]\sqrt{[3]}}{[4]}, \\
\widehat{W}_0 &= \epsilon_2 \frac{[2]}{[4]\sqrt{[3]}}, & \widehat{W}_1 &= \epsilon_1 \frac{\sqrt{[5]}}{\sqrt{[3]}}, & \widehat{W}_2 &= 0, & \widehat{W}_3 &= -\epsilon_1 \frac{\sqrt{[3][4]}}{\sqrt{[2]}},
\end{align*}
where $\epsilon_1, \epsilon_2 \in \{ \pm 1 \}$.

The equivalence of the cell systems $W$ for different choices of $\epsilon_1, \epsilon_2 \in \{ \pm 1 \}$ follows as in the proof of Theorem \ref{thm:cell_system-sigma2m}, and similarly for $\widetilde{W}$ and $\widehat{W}$. Thus we will only consider cell systems with $\epsilon_1 = \epsilon_2 = 1$. Equivalence of cell systems $W$ and $\widetilde{W}$, for cells involving $\gamma$, $\gamma'$, is given by a unitary $u$ which satisfies relations \eqref{eqn:equivWijj-1} and \eqref{eqn:equivWijj-2} with $m=2$, similar equations for cells for the closed paths along vertices $2 \to 2 \to 3$, and relations \eqref{eqn:equivalence-Wm_gamma}-\eqref{eqn:equivalence-W3}. Equivalence of the cell systems $W$ and $\widetilde{W}$ above is then given by $u$ with $\disp u_{11} = u_{22} = -\frac{1}{\sqrt{[3]}}$ and $\disp u_{12} = -u_{21} = \frac{\sqrt{[3]}}{\sqrt{[5]}}$.
However, the cell systems $W$ and $\widehat{W}$ given above are not equivalent. This can be seen from \eqref{eqn:equivWijj-1}, the equivalent relation for $\widehat{W}^{\gamma}_{2,2,3}$, and \eqref{eqn:equivalence-Wm_gamma}, which give the inconsistent linear system
\begin{align*}
0 &= \frac{\sqrt{[2][5]}}{\sqrt{[4]}} u_{11} + \frac{\sqrt{[2]}}{\sqrt{[4]}} u_{12}, \\
\frac{[5]}{[3]} &= -\frac{\sqrt{[2]^3[6]}}{[4]} u_{11} + \frac{[2]\sqrt{[3]}}{[4]} u_{12}, \\
-\frac{[5]\sqrt{[2]}}{[3]\sqrt{[4]}} &= \frac{[2]}{[4]\sqrt{[3]}} u_{11} - \frac{[6]\sqrt{[5]}}{[2]\sqrt{[3]^3}}.
\end{align*}
\end{proof}

\begin{Thm} \label{thm:cell_system-E14}
Any trivalent cell system in a canonical $\ast$-module category $\mathcal{M}$ with nimrep graph $\mathcal{E}_{14}$ is equivalent to
\begin{align*}
W_{1,2,2} &= \sqrt{[3]}, \qquad &
W_{2,2,2} = \frac{[6]}{[4]}, & \qquad\qquad\quad
W_{2,2,3} = -\frac{[2]\sqrt{[3][5]}}{[4]}, \\
W_{2,3,3}^{\gamma} &= 0, &
W_{2,3,3}^{\gamma'} &= \frac{\sqrt{[2][3][5][6]}}{[4]}, \\
W_{3,3,4}^{\gamma} &= -\frac{[5]^3}{\sqrt{[4][6]}}, &
W_{3,3,4}^{\gamma'} &= -\frac{[5]\sqrt{[2]}}{\sqrt{[3][6]}}, \\
W(\gamma,\gamma,\gamma) &= \frac{\sqrt{[5]^3}}{\sqrt{[3][4][6]}}, &
W(\gamma,\gamma,\gamma') &= -\frac{[5]\sqrt{[2]}}{\sqrt{[6]}}, \\
W(\gamma,\gamma',\gamma') &= 0, &
W(\gamma',\gamma',\gamma') &= \frac{[10]\sqrt{[2]}}{[4]\sqrt{[6]}},
\end{align*}
where  $W_{i,3,3}^{\eta}$ denotes the cell for the closed loop $i\to 3 \to 3 \to i$ ($i=2,4$) which goes along the loop $\eta \in \{ \gamma, \gamma' \}$.
\end{Thm}

\begin{proof}
The Perron-Frobenius weights for $\mathcal{E}_{14}$ are $\phi_1=1$, $\phi_2=[3]$, $\phi_3=[5]$, $\phi_4=[12]/[6]$.
Relation (\ref{eqn:cell-relations-1}) with $s(a)=1$, $r(a)=2$ and $d=a$ yields $\left| W_{1,2,2} \right|^2 = [3]$. Now $W_{1,2,2} \in \bbR$ (see Remark \ref{rem:equivalent-unitary-W}), thus $W_{1,2,2}=\epsilon_1 \sqrt{[3]}$, $\epsilon_1 \in \{ \pm 1 \}$.
From relation (\ref{eqn:cell-relations-3}) with $s(a)=1$, $r(a)=s(b)=r(b)=2$ and $c=a$, $d=b$, we obtain
$$\epsilon_1 [3] W_{2,2,2} - [3]^2 = - \frac{[2][3]^2}{[4]} \qquad \Rightarrow \qquad W_{2,2,2} = \epsilon_1 \frac{[6]}{[4]}.$$
Relation (\ref{eqn:cell-relations-2}) with $s(a)=2$ yields
$$\epsilon_1 \sqrt{[3]} + \epsilon_1 \frac{[6]\sqrt{[3]}}{[4]} + \sqrt{[5]} W_{2,2,3} = 0 \qquad \Rightarrow \qquad W_{2,2,3} = - \epsilon_1 \frac{[2]\sqrt{[3][5]}}{[4]}.$$

For the remaining cells, we obtain the following equations, where we set $W_0:=W(\gamma,\gamma,\gamma)$, $W_1:=W(\gamma,\gamma,\gamma')$, $W_2:=W(\gamma,\gamma',\gamma')$ and $W_3:=W(\gamma',\gamma',\gamma')$.
From relation (\ref{eqn:cell-relations-1}) with $s(a)=2$, $r(a)=3$ and $d=a$ we obtain
\begin{equation} \label{eqn:cells_E14-1}
(W_{2,3,3}^{\gamma})^2 + (W_{2,3,3}^{\gamma'})^2 = \frac{[2][3][5][6]}{[4]^2},
\end{equation}
and similarly with $s(a)=3$, $r(a)=4$ and $d=a$ we obtain
\begin{equation}
(W_{3,3,4}^{\gamma})^2 + (W_{3,3,4}^{\gamma'})^2 = \frac{[5][12]}{[6]}.
\end{equation}
Relation (\ref{eqn:cell-relations-1}) with $a=d=\gamma$ yields
\begin{equation}
(W_{2,3,3}^{\gamma})^2 + W_0^2 + 2W_1^2 + W_2^2 + (W_{3,3,4}^{\gamma})^2 = [5]^2,
\end{equation}
and similarly with $a=d=\gamma'$ we obtain
\begin{equation}
(W_{2,3,3}^{\gamma'})^2 + W_1^2 + 2W_2^2 + W_3^2 + (W_{3,3,4}^{\gamma'})^2 = [5]^2,
\end{equation}
whilst $a=\gamma$, $d=\gamma'$ yields
\begin{align}
W_{2,3,3}^{\gamma} W_{2,3,3}^{\gamma'} + W_0 W_1 + 2W_1 W_2 + W_2 W_3 + W_{3,3,4}^{\gamma} W_{3,3,4}^{\gamma'} = 0.
\end{align}
Relation (\ref{eqn:cell-relations-2}) with $a=\gamma$ yields
\begin{equation}
\sqrt{[3]} W_{2,3,3}^{\gamma} + \sqrt{[5]} W_0 + \sqrt{[5]} W_2 + \frac{\sqrt{[12]}}{\sqrt{[6]}} W_{3,3,4}^{\gamma} = 0,
\end{equation}
and similarly with $a=\gamma'$ yields
\begin{equation}
\sqrt{[3]} W_{2,3,3}^{\gamma'} + \sqrt{[5]} W_1 + \sqrt{[5]} W_3 + \frac{\sqrt{[12]}}{\sqrt{[6]}} W_{3,3,4}^{\gamma'} = 0.
\end{equation}
Relation (\ref{eqn:cell-relations-3}) with $s(a)=2$, $r(a)=s(b)=3$, $r(b)=4$ and $c=a$, $d=b$ yields
\begin{equation}
W_{2,3,3}^{\gamma} W_{3,3,4}^{\gamma} + W_{2,3,3}^{\gamma'} W_{3,3,4}^{\gamma'} = -\frac{[2][5]\sqrt{[3][12]}}{[4]\sqrt{[6]}}.
\end{equation}
Relation (\ref{eqn:cell-relations-3}) with $s(a)=2$, $r(a)=3$, $b=\gamma$, $c=a$, and $d=b$ yields
\begin{equation}
W_{2,3,3}^{\gamma} W_0 + W_{2,3,3}^{\gamma'} W_1 - \frac{\sqrt{[5]}}{\sqrt{[3]}} (W_{2,3,3}^{\gamma})^2 = -\frac{[2]\sqrt{[3][5]^3}}{[4]},
\end{equation}
and similarly, with $s(a)=2$, $r(a)=3$, $b=\gamma'$, $c=a$, and $d=b$ we have
\begin{equation}
W_{2,3,3}^{\gamma} W_2 + W_{2,3,3}^{\gamma'} W_3 - \frac{\sqrt{[5]}}{\sqrt{[3]}} (W_{2,3,3}^{\gamma'})^2 = -\frac{[2]\sqrt{[3][5]^3}}{[4]}.
\end{equation}
Similarly, with $s(a)=4$ and $r(a)=3$ we obtain
\begin{align}
W_{3,3,4}^{\gamma} W_0 + W_{3,3,4}^{\gamma'} W_1 - \frac{\sqrt{[5][6]}}{\sqrt{[12]}} (W_{3,3,4}^{\gamma})^2 &= -\frac{[2]\sqrt{[5]^3[12]}}{[4]\sqrt{[6]}}, \\
W_{3,3,4}^{\gamma} W_2 + W_{3,3,4}^{\gamma'} W_3 - \frac{\sqrt{[5][6]}}{\sqrt{[12]}} (W_{3,3,4}^{\gamma'})^2 &= -\frac{[2]\sqrt{[5]^3[12]}}{[4]\sqrt{[6]}}.
\end{align}
Finally, relation (\ref{eqn:cell-relations-3}) with $s(a)=2$, $r(a)=3$, $c=a$, $b=\gamma$ and $d=\gamma'$ yields
\begin{equation}
W_{2,3,3}^{\gamma} W_1 + W_{2,3,3}^{\gamma'} W_2 - \frac{\sqrt{[5]}}{\sqrt{[3]}} W_{2,3,3}^{\gamma} W_{2,3,3}^{\gamma'} = 0,
\end{equation}
and similarly, with $s(a)=4$, $r(a)=3$, $c=a$, $b=\gamma$ and $d=\gamma'$ yields
\begin{equation} \label{eqn:cells_E14-14}
W_{3,3,4}^{\gamma} W_1 + W_{3,3,4}^{\gamma'} W_2 - \frac{\sqrt{[5][6]}}{\sqrt{[12]}} W_{3,3,4}^{\gamma} W_{3,3,4}^{\gamma'} = 0.
\end{equation}

Just as in the proof of Theorem \ref{thm:cell_system-sigma2m}, the cells $W_0, W_1, W_2, W_3$ in any equivalent cell system satisfies equations \eqref{eqn:equivalence-W0}-\eqref{eqn:equivalence-W3}, and by the same argument used in that proof, we see that any cell system is therefore equivalent to a cell system where $W_2 = 0$.
Solving equations \eqref{eqn:cells_E14-1}-\eqref{eqn:cells_E14-14} with $W_2=0$, we obtain the following solutions:
\begin{align*}
W_{2,3,3}^{\gamma} &= 0, & W_{2,3,3}^{\gamma'} &= \epsilon_1 \frac{\sqrt{[2][3][5][6]}}{[4]}, & W_{3,3,4}^{\gamma} &= -\epsilon_2 \frac{\sqrt{[5]^3}}{\sqrt{[4][6]}}, & W_{3,3,4}^{\gamma'} &= -\epsilon_1 \frac{[5]\sqrt{[2]}}{\sqrt{[3][6]}}, \\
W_0 &= \epsilon_2 \frac{\sqrt{[5]^3}}{\sqrt{[3][4][6]}}, & W_1 &= -\epsilon_1 \frac{[5]\sqrt{[2]}}{\sqrt{[6]}}, & W_2 &= 0, & W_3 &= \epsilon_1 \frac{[10]\sqrt{[2]}}{[4]\sqrt{[6]}},
\end{align*}
where $\epsilon_1, \epsilon_2 \in \{ \pm 1 \}$, or
\begin{align*}
\widetilde{W}_{2,3,3}^{\gamma} &= -\epsilon_2 \frac{[2]\sqrt{[3][5][8]}}{\sqrt{[4]^3}}, & \widetilde{W}_{2,3,3}^{\gamma'} &= -\epsilon_1 \frac{[2]\sqrt{[3][5]}}{[4]}, & \widetilde{W}_{3,3,4}^{\gamma} &= 0, & \widetilde{W}_{3,3,4}^{\gamma'} &= \epsilon_1 \frac{\sqrt{[5][12]}}{\sqrt{[6]}}, \\
\widetilde{W}_0 &= \epsilon_2 \frac{[2][3]\sqrt{[8]}}{\sqrt{[4]^3}}, & \widetilde{W}_1 &= -\epsilon_1 \frac{[2][5]}{[4]}, & \widetilde{W}_2 &= 0, & \widetilde{W}_3 &= \epsilon_1 \frac{[2][6]}{[5]},
\end{align*}
where $\epsilon_1, \epsilon_2 \in \{ \pm 1 \}$, or
\begin{align*}
\widehat{W}_{2,3,3}^{\gamma} &= -\epsilon_2 \frac{[7]\sqrt{[3]}}{[4]}, & \widehat{W}_{2,3,3}^{\gamma'} &= \epsilon_1 \frac{[3]\sqrt{[2]}}{\sqrt{[8]}}, & \widehat{W}_{3,3,4}^{\gamma} &= \epsilon_2 \frac{[2]\sqrt{[5]}}{\sqrt{[3]}}, & \widehat{W}_{3,3,4}^{\gamma'} &= \epsilon_1 \frac{\sqrt{[2][5]}}{\sqrt{[8]}}, \\
\widehat{W}_0 &= \epsilon_2 \frac{[2]\sqrt{[7]}}{[4]\sqrt{[3]}}, & \widehat{W}_1 &= \epsilon_1 \frac{\sqrt{[2][3][5]}}{\sqrt{[8]}}, & \widehat{W}_2 &= 0, & \widehat{W}_3 &= -\epsilon_1 [4],
\end{align*}
where $\epsilon_1, \epsilon_2 \in \{ \pm 1 \}$.

The equivalence of the cell systems $W$ for different choices of $\epsilon_1, \epsilon_2 \in \{ \pm 1 \}$ follows as in the proof of Theorem \ref{thm:cell_system-sigma2m}, and similarly for the equivalence of the cell systems $\widetilde{W}$, $\widehat{W}$. 
Equivalence of $W$ and $\widetilde{W}$ is given by the unitary $u$ with $\disp u_{11} = -u_{22} = \frac{[5]}{\sqrt{[8][12]}}$ and $\disp u_{12} = u_{21} = -\frac{[5]}{\sqrt{[4][12]}}$. Equivalence of $\widetilde{W}$ and $\widehat{W}$ ig given by $u$ with $\disp u_{11} = u_{22} = \frac{\sqrt{[2][6]}}{\sqrt{[8][12]}}$ and $\disp u_{12} = -u_{21} = \frac{[2]\sqrt{[6]}}{\sqrt{[3][12]}}$.
\end{proof}

\subsection{Classification of module categories for $SO(3)_{2m}$} \label{sect:T-cell-system}

As discussed in Section \ref{sect:construct-cat}, there is a trace $\text{tr}_T$ on the $SO(3)_{2m}$ algebra $\mathcal{W}_{n} = \varphi(\mathcal{V}_n^{(2)})$ defined on diagrams by attaching a string joining the $j^{\mathrm{th}}$ vertex along the top with the $j^{\mathrm{th}}$ vertex along the bottom, for each $j\in\{1,2,\ldots,n\}$, with each resulting closed loop contributing a factor of $[3]_q$. As noted in Section \ref{sect:SO3-cat}, we have $\text{tr}_T(\mathfrak{f}_{2m})=[4m+1]_q = 1$.

Let $A_0 \subset A_1 \subset A_2 \subset \cdots$ be the path algebra for $\mathcal{G}$, as in Section \ref{sect:cell-systems}, so that the algebra $A_j$ is the space of endomorphisms $F(\mathcal{W}_{j})$, where $F$ is the tensor functor defined as in \eqref{eqn:functorF}, and has basis indexed by all pairs $(\alpha_1,\alpha_2)$ of paths of length $j$ on $\mathcal{G}$, where $s(\alpha_1)=s(\alpha_2)$ and $r(\alpha_1)=r(\alpha_2)$.
By \cite[Theorem 6.1]{effros:1981} there is a unique normalised faithful trace $\text{tr}_A$ on $\bigcup_j A_j$, defined as in \cite{evans/kawahigashi:1994} by
\begin{equation} \label{eqn:trace}
\mathrm{tr}_A((\alpha_1,\alpha_2)) = \delta_{\alpha_1, \alpha_2} [3]_q^{-j} \phi_{r(\alpha_1)} \phi_{s(\alpha_1)}^{-1},
\end{equation}
for paths $\alpha_i$ on $\mathcal{G}$ of length $j$, where $(\phi_v)$ denotes the Perron-Frobenius eigenvector of $\mathcal{G}$.
We define another trace $\text{tr}_{A_j}$ by $\mathrm{tr}_{A_j}((\alpha_1,\alpha_2)) = [3]_q^j \mathrm{tr}_A((\alpha_1,\alpha_2))$ for $|\alpha_i|=j$.
Then $\text{tr}_{A_j}(\bf{1}_j)$ is the rank of the identity $\bf{1}_j$ of $A_j$, i.e. the number of paths of length $j$ on $\mathcal{G}$.

The algebra $A_j$ decomposes as $A_j = \bigoplus_v e_v A_j$, where the sum is over all vertices $v$ of $\mathcal{G}$ and $e_v$ denotes a basis element for the one-dimensional space of paths of length 0 at $v$, as in Section \ref{sect:cells-intro}.
Suppose $\mathfrak{D}$ is a morphism in $\mathcal{C}^m$ which is a diagram with $j$ vertices along the top and bottom, and let $D:= F(\mathfrak{D}) \in A_j$.
If we fix any vertex $v$ of $\mathcal{G}$, the action on $e_v D$ of the morphism given by attaching strings joining the vertices along the top with the those along the bottom is $\text{tr}_{A_j}(e_v D)$ by \eqref{eqn:canonical-E}, and we have $\text{tr}_{A_j}(e_v D) = \text{tr}_T(\mathfrak{D})$.

It was shown in Section \ref{sect:classification-W} that any module category $\mathcal{M}$ with nimrep graph $\mathcal{A}_{2m}$, $\mathcal{E}_8$ or $\mathcal{E}_{14}^c$ is equivalent to a canonical module category with a real trivalent cell system in which $W(\tilde{c}\tilde{b}\tilde{a})=W(abc)$, and similarly and $\ast$-module category with nimrep graph $\sigma_{2m}$, $\mathcal{E}_8^c$ or $\mathcal{E}_{14}$ has a real trivalent cell system. Therefore we may choose bases of the morphism spaces such that $F(\mathfrak{f}_{2m})$ is a real projection, where $2m$ is the $SO(3)$ level.
Then by \eqref{eqn:tr(fj)}, $\text{tr}_{A_j}(e_v F(\mathfrak{f}_{2m}))=\text{tr}_T(\mathfrak{f}_{2m})=1$, thus $e_v F(\mathfrak{f}_{2m})$ is a rank-one projection for each vertex $v$ of $\mathcal{G}$.
Thus, from \eqref{eqn:diagrammatic_relations-T}, $e_v F(\mathfrak{t})$ exists and is uniquely determined, up to a sign $\varepsilon_v \in \{ \pm 1 \}$.
Fix a vertex $x$ of $\mathcal{G}$. Since $\mathfrak{t}$ is invariant (up to a change of sign) under rotation, the phases $\varepsilon_v$ are all determined by the choice of $\varepsilon := \varepsilon_x$.

Thus $F(\mathfrak{t})$ is uniquely determined, up to sign, by the trivalent cell system $W$. Changing sign is equivalent to interchanging the actions of $Q_{\pm} \in \mathcal{C}$ on $\mathcal{M}$. As noted at the end of Section \ref{sect:module_cat}, any two such module categories are equivalent.
We therefore obtain the following classification of $\ast$-module categories for $SO(3)_{2m}$, which is the main result of the paper:

\begin{Thm} \label{Thm:classification}
The $\ast$-module categories for the $SO(3)_{2m}$ modular tensor category $\mathcal{C}^m$ are classified by the $SO(3)_{2m}$ nimrep graphs, illustrated in Figure \ref{fig-SO3_nimrep_graphs}. In particular,
\begin{itemize}
\item for $m=4$ there are six inequivalent $\ast$-module categories: 
there are unique module categories given by $\mathcal{A}_{8}$ and $\sigma_{8}$, and two inequivalent module categories for each of $\mathcal{E}_{8}$ and $\mathcal{E}_{8}^c$;
\item for $m=7$ there are four inequivalent $\ast$-module categories given by $\mathcal{A}_{14}$, $\sigma_{14}$, $\mathcal{E}_{14}$ and $\mathcal{E}_{14}^c$;
\item for all other $m$ there are two inequivalent $\ast$-module categories given by $\mathcal{A}_{2m}$ and $\sigma_{2m}$.
\end{itemize}
\end{Thm}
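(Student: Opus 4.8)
The plan is to assemble the classification from the data that, by the discussion of Sections \ref{sect:module_cat}--\ref{sect:cell-systems}, pin down a $\ast$-module category $\mathcal{M}$ over $\mathcal{C}^m$ up to equivalence: the nimrep graph $\mathcal{G}=G_{\rho_1}$, the bilinear forms $E^{\cup}$ attached to cups and caps, and the trivalent cell system $W$ together with the action $F(\frak{t})$ of the $t$-box. These are complete invariants because every morphism in $\mathcal{C}^m$ is a linear combination of diagrams built from cups, caps, trivalent vertices and at most one $t$-box, so the functor $F$ is determined once it is known on these generators. First I would invoke Section \ref{sect:mod-inv-nimreps}: the eigenvalue constraint, together with Graves' classification, forces $\mathcal{G}$ to be one of the finitely many graphs in Figure \ref{fig-SO3_nimrep_graphs}, namely $\mathcal{A}_{2m}$ and $\sigma_{2m}$ for all $m$, together with $\mathcal{E}_8,\mathcal{E}_8^c$ only when $m=4$ and $\mathcal{E}_{14},\mathcal{E}_{14}^c$ only when $m=7$. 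This reduces the statement to showing that each admissible graph supports exactly one $\ast$-module category up to equivalence.

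Next I would use the reduction of Section \ref{sect:cells-intro}: by Lemma \ref{Lemma:E/E=PF} and the linear transformations $\eta^{(x,y)}$, every module category is equivalent to a \emph{canonical} one, so that the bilinear forms $E^{\cup}$ are fixed by \eqref{eqn:canonical-E} and the only remaining freedom is in $W$ and in $F(\frak{t})$. The cell-system theorems \ref{thm:cell_system-A2m}, \ref{thm:cell_system-E8}, \ref{thm:cell_system-E14c} (for $\mathcal{A}_{2m},\mathcal{E}_8,\mathcal{E}_{14}^c$) and \ref{thm:cell_system-sigma2m}, \ref{thm:cell_system-E8c}, \ref{thm:cell_system-E14} (for $\sigma_{2m},\mathcal{E}_8^c,\mathcal{E}_{14}$) then assert that for each graph the trivalent cell system in a canonical $\ast$-module category is unique up to equivalence.

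Then I would discharge the $t$-box contribution via Section \ref{sect:T-cell-system}: since the cell system may be taken real, $e_v F(\frak{f}_{2m})$ is a rank-one projection at each vertex $v$, so \eqref{eqn:diagrammatic_relations-T} determines $e_v F(\frak{t})$ uniquely up to a sign $\varepsilon_v$, and rotational invariance of $\frak{t}$ forces all the $\varepsilon_v$ to be fixed by a single sign $\varepsilon$. Hence $F(\frak{t})$ is determined by $W$ up to this one global sign, and flipping the sign merely interchanges the actions of $Q_+$ and $Q_-$, yielding an equivalent module category as noted at the end of Section \ref{sect:module_cat}. Putting the steps together shows that each graph in Figure \ref{fig-SO3_nimrep_graphs} carries exactly one $\ast$-module category up to equivalence, and counting the admissible graphs for each level gives the three cases of the statement.

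In truth most of the work is already done in the earlier sections, so the theorem is largely a bookkeeping synthesis; the points that I expect to require the most care are (i) justifying rigorously that the listed data are \emph{complete} invariants, i.e. that a tensor functor $F:\mathcal{C}^m\to\mathrm{Fun}(\mathcal{M},\mathcal{M})$ is determined on all of $\mathcal{C}^m$ by its values on the generating cups, caps, trivalent vertices and $\frak{t}$, and that equivalence of the data is equivalent to equivalence of the module categories in both directions; and (ii) the existence direction, namely checking that the cell systems produced by the theorems (which by construction solve the functoriality constraints \eqref{eqn:cell-relations-1}--\eqref{eqn:cell-relations-3}) are mutually consistent with the induced $F(\frak{t})$ and with all the $t$-box relations, so that each graph really arises from an honest $\ast$-module category rather than merely from a formal solution of the cell equations.
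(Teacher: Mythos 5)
Your proposal follows the paper's own proof essentially step for step: the nimrep classification of Section \ref{sect:mod-inv-nimreps}, reduction to canonical module categories via Lemma \ref{Lemma:E/E=PF}, uniqueness of the trivalent cell systems from Theorems \ref{thm:cell_system-A2m}--\ref{thm:cell_system-E14}, and the determination of $F(\frak{t})$ up to a single global sign whose flip interchanges $Q_+ \leftrightarrow Q_-$ and hence produces an equivalent module category. The two caveats you flag, completeness of the generating data and the existence direction, are handled at exactly the same implicit level in the paper (morphisms are generated by cups, caps, trivalent vertices and at most one $\frak{t}$-box, and \eqref{eqn:diagrammatic_relations-T} is the only relation needed to define the $t$-box), so your write-up is at parity with the published argument.
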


We conjecture that Theorem \ref{Thm:classification} is in fact a classification of module categories for $\mathcal{C}^m$. For any such module category the possible nimrep graphs are those given in Theorem \ref{Thm:classification}, and there is a unique $\ast$-module category associated to that nimrep. As noted in Remark \ref{Rem:conjecture-unique-sigma2m} in the case of the nimrep graph $\sigma_{2m}$, the numerical evidence suggests that all module categories associated with a given nimrep are indeed equivalent, but we have not been able to show this explicitly.

\section{$SO(3)$-Temperley-Lieb algebra and $\mathcal{A}_{2m}$ subfactor} \label{sect:realisation_MI_subfactors}

In Section \ref{sect:SO3-TL} we discuss an $SO(3)$-analogue of the Temperley-Lieb algebra. In Section \ref{sect:string_alg} we define the string algebras for an $SO(3)$ graph $\mathcal{G}$, that is, inclusions of finite dimensional algebras whose Bratteli diagram is given by $\mathcal{G}$, and realise the $SO(3)$-Temperley-Lieb elements inside this algebra using the cell systems classified in Section \ref{sect:cell-systems}. In Section \ref{sect:A2m-subfactor} we present a construction of a subfactor built from the $SO(3)$-Temperley-Lieb algebra, with principal graph coming from $\mathcal{A}_{2m}$, recovering a subfactor presented in \cite{izumi:1991}. 
In Section \ref{sect:GHJ} we comment on the application of the results of this paper to the realisation of the $SO(3)$ modular invariants by braided subfactors via an analogue of the Goodman-de la Harpe-Jones subfactors.

\subsection{$SO(3)$-Temperley-Lieb algebra} \label{sect:SO3-TL}

For generic $q$, Lehrer and Zhang \cite{lehrer/zhang:2010, lehrer/zhang:2008} gave a presentation of $\text{End}_{\mathcal{U}_q(\mathfrak{sl}_2)}(V_q^{\otimes r})$, where $\mathcal{U}_q(\mathfrak{sl}_2)$ is the quantised universal enveloping algebra of $\mathfrak{sl}_2$ over the function field $\mathbb{C}(q^{1/2})$, and $V_q$ is the irreducible 3-dimensional representation of quantum $\mathfrak{sl}_2$, i.e. the fundamental representation of $\mathfrak{so}_3$.
This algebra, the $SO(3)$-Temperley-Lieb algebra, is a quotient of the BMW algebra \cite{birman/wenzl:1989, murakami:1987} whose parameters are given in terms of $q$. This algebra $BMW(q)$ has generators $g_i^{\pm1}$, $e_i$, $i \in \bbN$, where the $g_i^{\pm1}$ satisfy the braid relations
\begin{equation} \label{eqn:braid-relations}
g_i g_{i+1} g_i = g_{i+1} g_i g_{i+1}, \qquad\qquad g_i g_j = g_j g_i, \quad (|i-j|>1),
\end{equation}
whilst the $e_i$ satisfy the Jones relations
\begin{equation}
e_i^2 = [3] e_i, \qquad\qquad e_i e_{i \pm 1} e_i = e_i, \qquad\qquad e_i e_j = e_j e_i, \quad (|i-j|>1).
\end{equation}
We also have mixed relations
\begin{equation} \label{eqn:BMW-relations-3}
g_i-g_i^{-1} = (q^2-q^{-2})(1-e_i), \qquad\qquad g_i e_i = e_i g_i = q^{-4} e_i,
\end{equation}
\begin{equation} \label{eqn:BMW-relations-4}
e_i g_{i-1}^{\pm1} e_i = q^{\pm4} e_i, \qquad\qquad e_i g_{i+1}^{\pm1} e_i = q^{\pm4} e_i.
\end{equation}
The following relations can also be derived from the above relations (\ref{eqn:braid-relations})-(\ref{eqn:BMW-relations-4}):
\begin{equation}
(g_i - q^2)(g_i+q^{-2}) = -q^{-4} (q^2-q^{-2}) e_i, \qquad\qquad g_i e_{i\pm1} e_i = g_{i\pm1}^{-1} e_i.
\end{equation}

Let $\mathcal{W}$ be the $SO(3)$ algebra for generic $q$, as in Remark \ref{SO3_q-generic_q}. As in \cite[Theorem 5.1]{fendley/krushkal:2010}, we can define a homomorphism of algebras $\eta: BMW(q) \to \mathcal{W}$ by $\eta(e_i) = E_i$, $\eta(g_i) = q^2+(q^{-2}-1)E_i-(q^2+q^{-2})U_i$, where $E_i$ is given by \eqref{def:E_i} and $U_i$ is given by
\begin{equation} \label{def:U_i}
U_i = \;\; \raisebox{-.45\height}{\includegraphics[width=40mm]{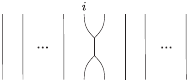}}
\end{equation}
Contrary to the claim in \cite[Remark 5.4]{fendley/krushkal:2010}, this homomorphism is in fact surjective:

\begin{Prop}
The homomorphism $\eta: BMW(q) \to \mathcal{W}$ defined above is surjective.
\end{Prop}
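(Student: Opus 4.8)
The plan is to exhibit a generating set for $\mathcal{W}$ inside the image of $\eta$, and then to argue that this set generates all of $\mathcal{W}$. The crucial first observation is that, since $\eta(g_i)$ is by construction the $SO(3)$ crossing of \eqref{eqn:braiding-SO3}, the generator $U_i$ of \eqref{def:U_i} already lies in $\mathrm{im}(\eta)$: solving the defining formula $\eta(g_i)=q^2+(q^{-2}-1)E_i-(q^2+q^{-2})U_i$ for $U_i$ gives
\[
U_i = -\frac{1}{q^2+q^{-2}}\left(\eta(g_i) - q^2 - (q^{-2}-1)E_i\right),
\]
and since $E_i=\eta(e_i)$ and $1=\eta(1)$ are visibly in the image, so is $U_i$ for every $i$. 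This is precisely the point overlooked in \cite[Remark 5.4]{fendley/krushkal:2010}: the generators $E_i$ alone produce only the Temperley--Lieb subalgebra of crossingless cup--cap diagrams, which contains no trivalent vertices, but resolving the braiding $\eta(g_i)$ recovers the trivalent piece $U_i$ as well.

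It therefore suffices to show that $\{1,E_i,U_i : 1\le i\le n-1\}$ generates $\mathcal{W}_n=\varphi(\mathcal{V}^{(2)}_n)$ as an algebra for each $n$. I would begin by recalling that $\mathcal{W}_n$ is spanned by planar $SO(3)$ tangles, and that, using relation \eqref{eqn:SO3-relation4} to remove elliptic faces together with the relations \eqref{eqn:SO3-relation1} (closed loop and tadpole) and the derived bubble relation, one may restrict to a spanning set of tangles with no internal faces. Each such tangle can be isotoped so that its local features---cups, caps, and trivalent vertices---occur at distinct heights, cutting it into horizontal strips, each carrying a single feature tensored with identities.

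The heart of the argument is to reassemble these strips into a word in the local generators. Here the key structural fact is a parity count: summing vertex degrees over the trivalent graph, the $2n$ boundary endpoints contribute $2n$ and the internal vertices contribute $3V$, so $3V+2n$ is even, hence the number $V$ of trivalent vertices is even. One then pairs each cup with the cap it closes off into a factor $E_i$, and each merging trivalent vertex with its matching splitting vertex into a factor $U_i$, using isotopy and the relations to bring the paired features onto adjacent strands. I expect this pairing-and-adjacency step to be the main obstacle: one must verify that, after possible conjugation by generators already shown to lie in the subalgebra, every feature can be moved next to its partner so that the strip genuinely realizes an $E_i$ or $U_i$ at adjacent positions, rather than a bare cup or a single trivalent vertex (neither of which lies in $\mathcal{W}_n$ individually). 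Once this reduction is carried out---by induction on the number of features, peeling off an innermost $E_i$ or $U_i$ at each stage---every spanning tangle is expressed as a word in $1,E_i,U_i$, and the surjectivity of $\eta$ follows.
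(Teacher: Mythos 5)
Your first step is fine and matches what the paper uses implicitly: since $\eta(e_i)=E_i$ and $\eta(g_i)=q^2+(q^{-2}-1)E_i-(q^2+q^{-2})U_i$, one can solve for $U_i$ inside $\mathrm{im}(\eta)$, so surjectivity reduces to showing that $1$, $E_i$, $U_i$ generate $\mathcal{W}$ as an algebra; the restriction to diagrams without elliptic faces via \eqref{eqn:SO3-relation1} and \eqref{eqn:SO3-relation4} is also the paper's starting point. The gap is in the main step. Your strategy is to factor each spanning diagram as a \emph{product} of generators, pairing each cup with a cap to form an $E_i$ and each merging trivalent vertex with a matching splitting vertex to form a $U_i$, after moving partners adjacent by isotopy. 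This cannot work, because $\mathcal{W}_n$ is not spanned by diagrams that factor as monomials in $E_i,U_i$: there are basis diagrams (no elliptic faces) that are proportional to no monomial in the generators, only to \emph{linear combinations} of such monomials. The element $F_i$ of Figure \ref{fig-F_i} --- two trivalent vertices joined by a horizontal edge --- is exactly such a diagram: by \eqref{eqn:SO3_Hecke-relations-5} it equals
$$\frac{[3]_q[4]_q^2}{[2]_q[6]_q}\left( U_iU_{i+1}U_i - \frac{[6]-[2][3]}{[3][4]}\, U_iE_{i+1}U_i - \frac{[2]^2}{[4]^2}\, U_i\right),$$
a genuine linear combination. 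Its two trivalent vertices are indeed one merging and one splitting, but no isotopy turns this H-shaped configuration into the vertically stacked pair that constitutes $U_i$; your parity count (the number of trivalent vertices is even) is true but carries no weight, since evenness does not produce a planar matching realizing each pair as a $U_i$. The same failure occurs for a triple point all three of whose legs end on the top boundary (the paper's Case 2), which pairs with nothing at all.

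Because of this, the decomposition must go through the defining local relations rather than through pairing, and this is precisely where the real work of the paper's proof lies. After peeling cups and caps off the boundary (your Step 2 is close to theirs), the paper isolates two triple points near the top edge and runs an induction on the number of vertical strands separating them, applying relation \eqref{eqn:SO3-relation4} twice at each inductive stage and relation \eqref{eqn:SO3-relation6} at the base step to rewrite the offending configurations as linear combinations of elements of $\mathrm{alg}(1,E_i,U_i)$; a separate induction with \eqref{eqn:SO3-relation4} handles the triple points lying entirely on the boundary. As written, your proposal establishes only the easy reduction and explicitly defers the pairing-and-adjacency step, but that step is not merely an obstacle to be smoothed over: in the form you propose it is false, and the correct argument is of a different nature (rewriting via relations, producing linear combinations, not factorizations).
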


\begin{proof}
Due to the relations \eqref{eqn:SO3-relation1}, \eqref{eqn:SO3-relation4}, it is sufficient to only consider diagrams in $\mathcal{W}$ without elliptic faces -- diagrams which do not have any regions which are bounded on all sides by strings, that is, the intersection of any region in the diagram with the outer boundary of the tangle is non-empty. We construct an algorithm to write any such diagram in $\mathcal{W}$ as an element in $\text{alg}(1,E_i,U_i \, | \, i \in \bbN)$. Let $D$ be a diagram with $n$ vertices along the top and bottom, and without elliptic faces. Note that since the number $2n$ of vertices on the boundary of any diagram is even, the number of triple points in the diagram must also be even.

\textbf{Step 1}:
If $D$ does not contain any triple points then it is a Temperley-Lieb diagram, i.e. is a product of $e_i$.

\textbf{Step 2}:
If there is a string in the diagram $D$ which has one endpoint attached to a vertex $v$ along the top of the diagram and the other endpoint attached to a vertex adjacent to $v$, then isotope the diagram to pull this `cup' out above the rest of the diagram, as illustrated in Figure \ref{fig-chromatic_isom-1}.
\begin{figure}[htb]
\begin{center}
\includegraphics[width=20mm]{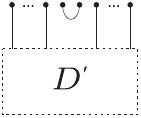} \\
\caption{Pulling a cup out of a diagram $D$} \label{fig-chromatic_isom-1}
\end{center}
\end{figure}

Suppose first that there is similarly a string whose endpoints are adjacent vertices along the bottom edge of the diagram. If this string is the boundary of a region which shares a common boundary with the top boundary of the diagram, then isotoping the strings we can pull this `cap' out above the rest of the diagram, as illustrated in the diagram on the right of Figure \ref{fig-chromatic_isom-2}, so that our diagram $D=XD_1$ where $X$ is a Temperley-Lieb diagram. Otherwise, by isotoping the strings as in the diagram on the left of Figure \ref{fig-chromatic_isom-2}, we can move the cap so that it can be pulled up as in the previous case, and our diagram is given by $D=X_1D_1X_2$ where $X_1$, $X_2$ are both Temperley-Lieb diagrams.
\begin{figure}[htb]
\begin{center}
\includegraphics[width=140mm]{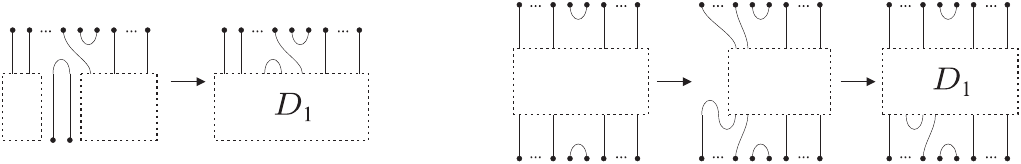} \\
\caption{Pulling a cap out of a diagram $D$} \label{fig-chromatic_isom-2}
\end{center}
\end{figure}

If there are no caps along the bottom edge, then since the number of vertices along the bottom boundary of the sub-diagram $D'$ is 2 greater than the number along the top edge, and since $D'$ contains no elliptic faces, there must be a triple point in $D'$ where exactly one string is attached to one of the vertices along the top boundary of $D'$ but the other two strings are attached either to vertices along the bottom boundary or to other triple points. Choose one of the latter two strings, and isotope the string to pull a cap out above the rest of the diagram, as illustrated in Figure \ref{fig-chromatic_isom-3}. We again have $D=XD_1$ where $X$ is a Temperley-Lieb diagram.
Repeat Step 2 for the diagram $D_1$, until there are no strings whose endpoints are adjacent vertices along the top edge of the diagram.
\begin{figure}[htb]
\begin{center}
\includegraphics[width=85mm]{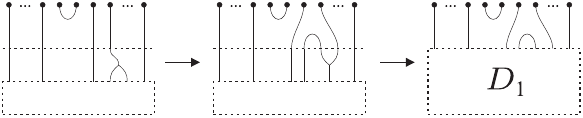} \\
\caption{Isotoping a triple point to pull a cap out of a diagram $D$} \label{fig-chromatic_isom-3}
\end{center}
\end{figure}

\textbf{Step 3}:
Since there are no strings attached to a vertex at the top of the diagram whose other endpoint is an adjacent vertex along the top edge of the diagram, for at least one string attached to a vertex at the top of the diagram its other endpoint will be attached to a triple point.

\textbf{Case 1}:
Assume first that such a triple point exists for which at most 2 of its strands are attached to a vertex along the top edge of the diagram. We isotope the strings to pull this triple point out above the rest of the diagram, as we did in Step 2 for a cup. By the same argument, there will be another string attached either to a vertex at the top of the diagram or to this triple point whose other endpoint will be attached to a triple point. From a combinatorial argument there one such triple point must have a second string which is attached to one of the vertices at the top of the diagram. We choose this triple point and pull it out above the rest of the diagram, and we have essentially one of the two situations illustrated in Figure \ref{fig-chromatic_isom-4}.
\begin{figure}[htb]
\begin{center}
\includegraphics[width=55mm]{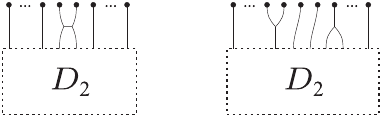} \\
\caption{Pulling two triple points out of a diagram $D$} \label{fig-chromatic_isom-4}
\end{center}
\end{figure}

In the case of the diagram on the left we have $D_1=XD_2$ where $X$ is $(U_i-\alpha(E_i-1))$ for some $i \in \{1,\ldots,n-1\}$, where $\alpha = [2]_q/[4]_q$.
We now consider the diagram on the right. Suppose that there are $m <n-2$ vertical strands separating the two triple points. We will show by induction on $m$ that this diagram can be written as a linear combination of diagrams of the form $X_1D_2X_2$ where $X_1,X_2 \in \text{alg}(1,E_i,U_i \, | \, i =1,\ldots,n-1)$ and $D_2$ has $m-1$ vertical strands separating the two triple points. Using relation \eqref{eqn:SO3-relation4} twice we have, with $\alpha$ as above,
\begin{align*}
\alpha \;\;& \raisebox{-.45\height}{\includegraphics[width=18mm]{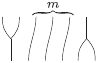}} \;\; = \;\; \raisebox{-.45\height}{\includegraphics[width=18mm]{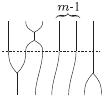}} \;\; - \;\; \raisebox{-.45\height}{\includegraphics[width=18mm]{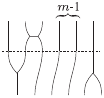}} \;\; + \alpha \;\; \raisebox{-.45\height}{\includegraphics[width=18mm]{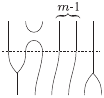}} \\
&\qquad = \;\; \raisebox{-.45\height}{\includegraphics[width=18mm]{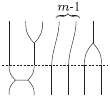}} \;\; - \;\; \raisebox{-.45\height}{\includegraphics[width=18mm]{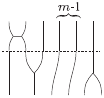}} \;\; + \left( \; \raisebox{-.45\height}{\includegraphics[width=18mm]{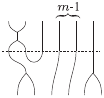}} \;\; - \;\; \raisebox{-.45\height}{\includegraphics[width=18mm]{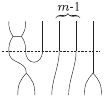}} \;\; + \alpha \;\; \raisebox{-.45\height}{\includegraphics[width=18mm]{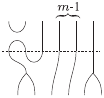}} \; \right) \\
&\qquad = \;\; \raisebox{-.45\height}{\includegraphics[width=18mm]{fig-chromatic_isom-5-5}} \;\; - \;\; \raisebox{-.45\height}{\includegraphics[width=18mm]{fig-chromatic_isom-5-6}} \;\; + \;\; \raisebox{-.45\height}{\includegraphics[width=18mm]{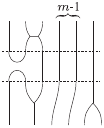}} \;\; - \;\; \raisebox{-.45\height}{\includegraphics[width=18mm]{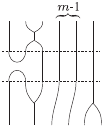}} \;\; + \alpha \;\; \raisebox{-.45\height}{\includegraphics[width=18mm]{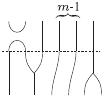}}
\end{align*}
For the base step, $m=0$, we have, again using relation \eqref{eqn:SO3-relation4} twice:
\begin{align*}
\alpha \;\; \raisebox{-.45\height}{\includegraphics[width=8mm]{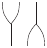}} \;\; &= \alpha \;\; \raisebox{-.45\height}{\includegraphics[width=10mm]{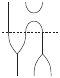}} \;\; - \;\; \raisebox{-.45\height}{\includegraphics[width=10mm]{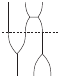}} \;\; + \;\; \raisebox{-.45\height}{\includegraphics[width=10mm]{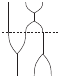}} \;\; = \alpha \;\; \raisebox{-.45\height}{\includegraphics[width=10mm]{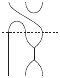}} \;\; - \;\; \raisebox{-.45\height}{\includegraphics[width=10mm]{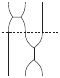}} \;\; + \;\; \raisebox{-.45\height}{\includegraphics[width=8mm]{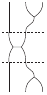}} \\
&= \alpha \;\; \raisebox{-.45\height}{\includegraphics[width=10mm]{fig-chromatic_isom-6-5}} \;\; - \;\; \raisebox{-.45\height}{\includegraphics[width=10mm]{fig-chromatic_isom-6-6}} \;\; + \left( \; \raisebox{-.45\height}{\includegraphics[width=8mm]{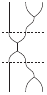}} \;\; - \alpha \;\; \raisebox{-.45\height}{\includegraphics[width=8mm]{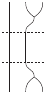}} \;\; + \alpha \;\; \raisebox{-.45\height}{\includegraphics[width=8mm]{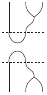}} \; \right) \\
&= \alpha \;\; \raisebox{-.45\height}{\includegraphics[width=10mm]{fig-chromatic_isom-6-5}} \;\; - \;\; \raisebox{-.45\height}{\includegraphics[width=10mm]{fig-chromatic_isom-6-6}} \;\; + \;\; \raisebox{-.45\height}{\includegraphics[width=8mm]{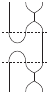}} \;\; - \alpha \;\; \raisebox{-.45\height}{\includegraphics[width=8mm]{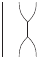}} \;\; + \alpha \;\; \raisebox{-.45\height}{\includegraphics[width=8mm]{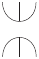}}
\end{align*}
So it remains to show that the last term, call it $F_1$, in the last line above is an element of $\text{alg}(1,E_i,U_i \, | \, i =1,2)$, which follows from using the relation \eqref{eqn:SO3-relation6}:
$$\raisebox{-.45\height}{\includegraphics[width=10mm]{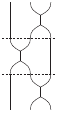}} \;\; = \frac{q^2-2+q^{-2}}{q^2+q^{-2}} \;\; \raisebox{-.45\height}{\includegraphics[width=10mm]{fig-chromatic_isom-6-11}} \;\; + \frac{q^2-1+q^{-2}}{(q^2+q^{-2})^2} \;\; \raisebox{-.45\height}{\includegraphics[width=8mm]{fig-chromatic_isom-6-13}} \;\; + \frac{1}{(q^2+q^{-2})^2} \;\; \raisebox{-.45\height}{\includegraphics[width=8mm]{fig-chromatic_isom-6-12}}$$

\textbf{Case 2}:
Now consider the case where if a triple point which has a strand attached to a vertex along the top edge of the diagram then it has all 3 of its strands attached to (necessarily adjacent) vertices along the top edge of the diagram. Suppose that there are $m <n-2$ vertical strands separating the two triple points. Then using relation \eqref{eqn:SO3-relation4} we have
\begin{align*}
\alpha \;\; \raisebox{-.45\height}{\includegraphics[width=24mm]{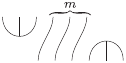}} \;\; &= \;\; \raisebox{-.45\height}{\includegraphics[width=26mm]{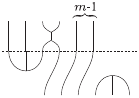}} \;\; - \;\; \raisebox{-.45\height}{\includegraphics[width=26mm]{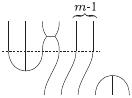}} \;\; + \alpha \;\; \raisebox{-.45\height}{\includegraphics[width=26mm]{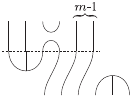}} \\
&= \;\; \raisebox{-.45\height}{\includegraphics[width=26mm]{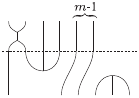}} \;\; - \;\; \raisebox{-.45\height}{\includegraphics[width=21mm]{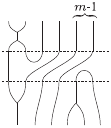}} \;\; + \alpha \;\; \raisebox{-.45\height}{\includegraphics[width=21mm]{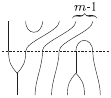}}
\end{align*}
and by induction we see that such a diagram is an element of $\text{alg}(1,E_i,U_i \, | \, i =1,\ldots,n-1)$.
\end{proof}

It was conjectured in \cite[Remark 5.4]{fendley/krushkal:2010} that the map $\eta$ is injective, however by \cite[Theorem 1.6]{lehrer/zhang:2008} it has a non-trivial kernel
\begin{align} \label{def:Phi_q}
\Phi_q &= aF_qE_2F_q-bF_q-cF_qE_2E_1E_3E_2F_q \nonumber \\
&\quad +dF_q \big( q^2E_2+(q^{-2}-1)E_2E_1E_3E_2-(q^2+q^{-2})E_2U_1E_3E_2 \big) F_q,
\end{align}
where $F_q=U_1U_3$ and the coefficients $a,b,c,d$ are given by:
$$a=1+(1-q^{-2})^2, \quad b=a+(1-q^2)^2, \quad c= \frac{q^2-q^{-2}+2q^{-4}-2q^{-6}+q^{-8}}{(q^2+q^{-2})^2}, \quad d=(q-q^{-1})^2.$$
It is a tedious computation to verify using \eqref{def:E_i}, \eqref{def:U_i} and the $SO(3)$ diagrammatic relations that $\Phi_q$ does indeed evaluate as zero in $\mathcal{W}$.

The generators $E_i$, $U_i$ of $\mathcal{W}$ satisfy the following relations, which are easily obtained from (\ref{eqn:braid-relations})-(\ref{eqn:BMW-relations-4}):
\begin{equation}
E_i^2 = [3] E_i, \qquad\qquad U_i^2 = U_i, \qquad\qquad U_iE_i = 0 = E_i U_i, \label{eqn:SO3_Hecke-relations-1}
\end{equation}
\begin{equation}
E_i E_j = E_j E_i, \qquad\qquad U_i U_j = U_j U_i, \qquad\qquad U_i E_j = E_j U_i, \quad (|i-j|>1)
\end{equation}
\begin{equation}
E_i E_{i \pm 1} E_i = E_i, \qquad\qquad E_i U_{i \pm 1} E_i = E_i,
\end{equation}
\begin{equation}
\frac{[3][4]}{[6]} U_i U_{i \pm 1} E_i = U_i E_{i \pm 1} E_i = U_{i+1} E_i + \frac{[2]}{[4]} \left( E_{i+1} E_i - E_i \right),
\end{equation}
\begin{equation}
U_i E_{i+1} U_i - \frac{[2]}{[4]}(U_i E_{i+1} + E_{i+1} U_i) + \frac{[2]^2}{[4]^2} E_{i+1} = U_{i+1} E_i U_{i+1} - \frac{[2]}{[4]}(U_{i+1} E_i + E_i U_{i+1}) + \frac{[2]^2}{[4]^2} E_i,
\end{equation}
\begin{equation}
U_i U_{i+1} U_i - \frac{[6]-[2][3]}{[3][4]} U_i E_{i+1} U_i - \frac{[2]^2}{[4]^2} U_i = U_{i+1} U_i U_{i+1} - \frac{[6]-[2][3]}{[3][4]} U_{i+1} E_i U_{i+1} - \frac{[2]^2}{[4]^2} U_{i+1}, \label{eqn:SO3_Hecke-relations-5}
\end{equation}
Note that here $(q^2+q^{-2})U_i$ is the element $f_i = -g_i - (1-q^{-2})e_i + q^2$ of \cite{lehrer/zhang:2010, lehrer/zhang:2008} and $E_i$ is $e_i$.
The L.H.S. (or R.H.S.) of equation (\ref{eqn:SO3_Hecke-relations-5}) is equal to $([2]_q[6]_q/[3]_q[4]_q^2)F_i$, where $F_i$ is the diagram illustrated in Figure \ref{fig-F_i}.

\begin{figure}
\begin{center}
\includegraphics[width=40mm]{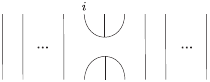} \\
\caption{$SO(3)$-TL element $F_i$} \label{fig-F_i}
\end{center}
\end{figure}

For generic $q$, the $SO(3)$-Temperley-Lieb algebra is defined \cite[Theorem 1.6]{lehrer/zhang:2008} as the quotient of the above algebra by the ideal generated by the relation $\Phi_q=0$, where $\Phi_q$ is defined in \eqref{def:Phi_q}.

For $q$ a primitive $4(2m+1)$-th root of unity, the $SO(3)_{2m}$-Temperley-Lieb algebra has generators $E_i$, $U_i$, $i \in \bbN$, with relations (\ref{eqn:SO3_Hecke-relations-1})-(\ref{eqn:SO3_Hecke-relations-5}), along with an additional generator $t$ with relation
\begin{align} \label{eqn:SO3_Hecke-relations-S}
t(E_m)&(E_{m-1}E_{m+1})(E_{m-2}E_mE_{m+2})\cdots(E_1E_3\cdots E_{2m-1})(E_2E_4\cdots E_{2m-2})\cdots(E_m)t \qquad \nonumber \\
&= [2m+1]_q f_{2m},
\end{align}
where $f_j$, $j=0,1,\ldots$ are the $SO(3)$ Jones-Wenzl projections. This relation is the algebraic version of the diagrammatic relation (\ref{eqn:diagrammatic_relations-T}).

\subsection{String Algebra construction} \label{sect:string_alg}

From now on we will restrict to the case where $\mathcal{M}$ is a $\ast$-module category for $\mathcal{C}^m$. Let $\mathcal{G}$ be the nimrep graph associated to $\mathcal{M}$, i.e. one of the $SO(3)_{2m}$ graphs illustrated in Figure \ref{fig-SO3_nimrep_graphs}, $W$ the cell-system for $\mathcal{M}$ and $F$ the functor defining the module structure of $\mathcal{M}$ as in Section \ref{sect:module_cat}. Let $A_j$ denote finite dimensional algebras in the path algebra of $\mathcal{G}$ as in Section \ref{sect:cell-systems}, and let $\mathfrak{E}_j$, $\mathfrak{U}_j$ denote the morphisms in $\mathcal{C}^m$ given by the diagrams $E_j$, $U_j$ respectively. We define $SO(3)_{2m}$-Temperley-Lieb operators $e_j := F(\mathfrak{E}_j), u_j := F(\mathfrak{U}_j) \in A_{j+1}$, so that $e_j$, $u_j$ are given by
\begin{align}
e_j &= \sum_{\sigma,\beta,\gamma} \frac{\sqrt{\phi_{r(\beta)}\phi_{r(\gamma)}}}{\phi_{s(\beta)}} \, (\sigma \cdot \beta \cdot \tilde{\beta}, \sigma \cdot \gamma \cdot \tilde{\gamma}), \label{eqn:E_j} \\
u_j &= \sum_{\sigma,\eta_i,\zeta_i,\lambda} \frac{1}{\phi_{s(\eta_1)}\phi_{r(\eta_2)}} \, W(\eta_1,\eta_2,\lambda) \overline{W(\zeta_1,\zeta_2,\lambda)} \, (\sigma \cdot \eta_1 \cdot \eta_2, \sigma \cdot \zeta_1 \cdot \zeta_2),  \label{eqn:U_j}
\end{align}
where $|\sigma|=j-1$, $|\beta|=|\gamma|=|\eta_i|=|\zeta_i|=1$.
It is a straightforward but somewhat tedious computation to show that these elements satisfy all the $SO(3)_{2m}$-Temperley-Lieb relations (\ref{eqn:SO3_Hecke-relations-1})-(\ref{eqn:SO3_Hecke-relations-5}).
The additional generator $F(\mathfrak{t}) \in A_m$ is the unique operator (up to a choice of sign) such that \eqref{eqn:SO3_Hecke-relations-S} is satisfied (see Section \ref{sect:T-cell-system}).
Since $\mathcal{M}$ is a $\ast$-module category, these operators are self-adjoint, where $(\sigma_1,\sigma_2)^{\ast} = (\sigma_2,\sigma_1)$, extended conjugate linearly to linear combinations.
Let $\text{tr}_A$ be the trace on $\bigcup_j A_j$ defined in \eqref{eqn:trace}.

\begin{Lemma} \label{Lemma-Markov_trace-SO(3)}
For $\mathcal{G}$ an $SO(3)_{2m}$ graph, $\mathrm{tr}_A$ is a Markov trace in the sense that $\mathrm{tr}_A(x e_j) = [3]^{-1} \mathrm{tr}(x)$ for any $x \in A_j$, $j \geq 1$.
\end{Lemma}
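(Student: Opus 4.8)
The plan is to verify the identity directly on a basis of $A_j$, exploiting the explicit path-algebra descriptions of both the trace \eqref{eqn:trace} and the Temperley--Lieb operator \eqref{eqn:E_j}. By linearity it suffices to take $x = (\alpha_1,\alpha_2)$ with $|\alpha_1| = |\alpha_2| = j$, $s(\alpha_1) = s(\alpha_2)$ and $r(\alpha_1) = r(\alpha_2) =: v$. Since $e_j \in A_{j+1}$, I would first promote $x$ to $A_{j+1}$ using the embedding of Section \ref{sect:cell-systems}, namely $x = \sum_{\mu : s(\mu) = v}(\alpha_1 \cdot \mu, \alpha_2 \cdot \mu)$, and then form the product $x e_j$ via the rule $(\zeta_1,\zeta_2)(\zeta_1',\zeta_2') = \delta_{\zeta_2,\zeta_1'}(\zeta_1,\zeta_2')$.

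Substituting \eqref{eqn:E_j}, the composition forces the matching condition $\alpha_2 \cdot \mu = \sigma \cdot \beta \cdot \widetilde{\beta}$. Reading this as an equality of length-$(j+1)$ paths pins down $\sigma$ as the length-$(j-1)$ prefix of $\alpha_2$, $\beta$ as the last edge of $\alpha_2$, and $\mu = \widetilde{\beta}$; the surviving summand is $(\alpha_1 \cdot \mu, \sigma \cdot \gamma \cdot \widetilde{\gamma})$ with coefficient $\sqrt{\phi_{r(\beta)}\phi_{r(\gamma)}}/\phi_{s(\beta)}$. Applying $\mathrm{tr}_A$ then imposes the diagonal constraint $\alpha_1 \cdot \mu = \sigma \cdot \gamma \cdot \widetilde{\gamma}$, which forces the prefix of $\alpha_1$ to agree with $\sigma$, forces $\gamma$ to be the last edge of $\alpha_1$, and (since $\widetilde{\gamma} = \mu = \widetilde{\beta}$ and edge-reversal is a bijection) forces $\gamma = \beta$. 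Hence $\alpha_1 = \alpha_2 =: \alpha$, exactly one term of the multiple sum survives, and --- consistently with $\mathrm{tr}_A(x) = 0$ for off-diagonal $x$ --- both sides vanish unless $x$ is diagonal.

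It then remains to collect the Perron--Frobenius factors. With $\gamma = \beta$ the coefficient becomes $\phi_{r(\beta)}/\phi_{s(\beta)}$, and using $r(\widetilde{\beta}) = s(\beta)$ together with $s(\alpha\cdot\mu) = s(\alpha)$ one has $\mathrm{tr}_A\big((\alpha\cdot\mu,\alpha\cdot\mu)\big) = [3]^{-(j+1)}\phi_{s(\beta)}\phi_{s(\alpha)}^{-1}$. Multiplying, the factor $\phi_{s(\beta)}$ cancels and $r(\beta) = r(\alpha) = v$, so that $\mathrm{tr}_A(x e_j) = [3]^{-(j+1)}\phi_{r(\alpha)}\phi_{s(\alpha)}^{-1} = [3]^{-1}\mathrm{tr}_A(x)$, as required.

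The computation is essentially bookkeeping; the only point demanding care --- and the step I would regard as the main obstacle --- is the correct tracking of the path concatenations and the edge-reversal convention $\beta \mapsto \widetilde{\beta}$, especially on the graphs $\sigma_{2m}$, $\mathcal{E}_8^c$ and $\mathcal{E}_{14}$, whose middle vertex carries a double self-loop and hence admits two distinct edges $\mu$ sharing the same endpoints. There one must check that reversal remains a bijection on the multiple edges between a fixed pair of vertices, so that the collapse of the double sum to a single surviving summand still holds; once this is granted the telescoping of the $\phi$-weights is immediate.
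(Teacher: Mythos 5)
Your proof is correct and is essentially identical to the paper's: both verify the identity on a matrix unit by embedding it into $A_{j+1}$, multiplying against the explicit formula \eqref{eqn:E_j} via the rule $(\zeta_1,\zeta_2)(\zeta_1',\zeta_2')=\delta_{\zeta_2,\zeta_1'}(\zeta_1,\zeta_2')$, and applying the trace \eqref{eqn:trace}, with the same collapse to a single surviving term and the same telescoping of Perron--Frobenius weights. Your closing worry about the graphs with a double self-loop is harmless: the paper's convention $\tilde{a}=a$ for self-loops makes edge-reversal a bijection on each edge space, so the collapse of the sum goes through unchanged.
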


\begin{proof}
Let $x \in A_j$ be the matrix unit $(\alpha_1 \cdot \alpha_1', \alpha_2 \cdot \alpha_2')$, where $|\alpha_i|=j-1$, $|\alpha_i'|=1$ ($i=1,2$). Then
\begin{align*}
x e_j &= \sum_{\sigma, \beta, \gamma, \mu} \frac{\sqrt{\phi_{r(\beta)}\phi_{r(\gamma)}}}{\phi_{s(\beta)}} \, (\alpha_1 \cdot \alpha_1' \cdot \mu, \alpha_2 \cdot \alpha_2' \cdot \mu) \cdot (\sigma \cdot \beta \cdot \tilde{\beta}, \sigma \cdot \gamma \cdot \tilde{\gamma}) \\
&= \sum_{\sigma, \beta, \gamma, \mu} \frac{\sqrt{\phi_{r(\beta)}\phi_{r(\gamma)}}}{\phi_{s(\beta)}} \, \delta_{\alpha_2,\sigma} \delta_{\alpha_2',\beta} \delta_{\mu, \tilde{\beta}} (\alpha_1 \cdot \alpha_1' \cdot \mu, \sigma \cdot \gamma \cdot \tilde{\gamma}) \\
&= \sum_{\gamma} \frac{\sqrt{\phi_{r(\alpha_2')}\phi_{r(\gamma)}}}{\phi_{s(\alpha_2')}} \, (\alpha_1 \cdot \alpha_1' \cdot \tilde{\alpha_2'}, \alpha_2 \cdot \gamma \cdot \tilde{\gamma}),
\end{align*}
and
\begin{align*}
\mathrm{tr}_A(x e_j) &= \sum_{\gamma} \frac{\sqrt{\phi_{r(\alpha_2')}\phi_{r(\gamma)}}}{\phi_{s(\alpha_2')}} \, \mathrm{tr}_A((\alpha_1 \cdot \alpha_1' \cdot \tilde{\alpha_2'}, \alpha_2 \cdot \gamma \cdot \tilde{\gamma})) \\
&= \sum_{\gamma} \frac{\sqrt{\phi_{r(\alpha_2')}\phi_{r(\gamma)}}}{\phi_{s(\alpha_2')}} \, \delta_{\alpha_1, \alpha_2} \delta_{\alpha_1', \gamma} \delta_{\alpha_2', \gamma} [3]^{-j-1} \phi_{s(\alpha_2')} \phi_{s(\alpha_1)}^{-1} \\
&= \delta_{\alpha_1, \alpha_2} \delta_{\alpha_1', \alpha_2'} [3]^{-j-1} \phi_{r(\alpha_2')} \phi_{s(\alpha_1)}^{-1} = [3]^{-1} \mathrm{tr}_A(x).
\end{align*}
The result for arbitrary $x \in A_j$ follows by linearity of the trace.
\end{proof}

Before we present the construction of the $SO(3)$-Goodman-de la Harpe-Jones subfactor, we need a few results. Denote by $d_{\mathcal{G}}$ the depth of the graph $\mathcal{G}$, that is, $d_{\mathcal{G}} = \max_{v,v'}(d_{v,v'})$, where $d_{v,v'}$ is the length of the shortest path between vertices $v,v'$ of $\mathcal{G}$.

\begin{Lemma} \label{Lemma:aE_jb}
For any $j>d_{\mathcal{G}}$, any element in the finite dimensional algebra $A_{j+1}$ is a linear combination of elements of the form $ae_jb$ where $a,b \in A_j$.
\end{Lemma}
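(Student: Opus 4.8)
The plan is to show that the set of products $a e_j b$ spans $A_{j+1}$, by exploiting the fact that $A_{j+1}$ is a direct sum of full matrix algebras. Recall that as a $\ast$-algebra $A_{j+1}$ decomposes as $\bigoplus_{u,v} M_{(\Delta_{\mathcal{G}}^{j+1})_{u,v}}(\bbC)$, with one block for each ordered pair of vertices $(u,v)$ joined by a path of length $j+1$, the matrix units of the block being the pairs $(\mu,\nu)$ of length-$(j+1)$ paths with $s(\mu)=s(\nu)=u$ and $r(\mu)=r(\nu)=v$. Since every element of $A_{j+1}$ is a linear combination of such matrix units, and since $a e_j b$ is bilinear in $a$ and $b$, it suffices to produce each matrix unit $(\mu,\nu)$ as a nonzero scalar multiple of some $a e_j b$ with $a,b\in A_j$.

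First I would compute $a e_j b$ on matrix units. Taking $a=(\alpha_1,\alpha_2)$, $b=(\delta_1,\delta_2)\in A_j$, embedding them into $A_{j+1}$ via $a\mapsto\sum_{|\mu|=1}(\alpha_1\cdot\mu,\alpha_2\cdot\mu)$, and using the definition \eqref{eqn:E_j} of $e_j$ together with the multiplication rule $(\alpha_1,\alpha_2)(\alpha_1',\alpha_2')=\delta_{\alpha_2,\alpha_1'}(\alpha_1,\alpha_2')$, a short calculation collapses both sums: writing $\alpha_2=\sigma\cdot\beta$ and $\delta_1=\sigma'\cdot\gamma$ with $|\sigma|=|\sigma'|=j-1$ and $\beta,\gamma$ single edges, one finds
\[
a e_j b \;=\; \delta_{\sigma,\sigma'}\,\frac{\sqrt{\phi_{r(\beta)}\phi_{r(\gamma)}}}{\phi_{s(\beta)}}\,(\alpha_1\cdot\tilde\beta,\;\delta_2\cdot\tilde\gamma).
\]
Thus $a e_j b$ is a nonzero scalar multiple of a single matrix unit precisely when the first $j-1$ edges of $\alpha_2$ and of $\delta_1$ agree. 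Conversely, to realise a prescribed matrix unit $(\mu,\nu)$ in the block $(u,v)$ I would split $\mu=\mu_0\cdot\tilde\beta$ and $\nu=\nu_0\cdot\tilde\gamma$ into first $j$ edges and last edge, set $\alpha_1=\mu_0$, $\delta_2=\nu_0$, and $\alpha_2=\sigma\cdot\beta$, $\delta_1=\sigma\cdot\gamma$ for a common $\sigma$. Since $\mu_0,\nu_0$ and the last edges $\tilde\beta,\tilde\gamma$ are arbitrary, every matrix unit of the block is obtained as long as a single path $\sigma$ of length $j-1$ from $u$ to $v$ exists; both sides require $\sigma$ to run from $u=s(\mu)=s(\nu)$ to $v=r(\mu)=r(\nu)$, so one $\sigma$ serves for both.

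This reduces the lemma to the purely graph-theoretic statement that, for $j-1\ge d_{\mathcal{G}}$, one has $(\Delta_{\mathcal{G}}^{j+1})_{u,v}>0\Rightarrow(\Delta_{\mathcal{G}}^{j-1})_{u,v}>0$; that is, whenever $u,v$ are joined by a walk of length $j+1$ they are joined by one of length $j-1$. Fixing the common parity of $j\pm1$, walks of a given parity between $u$ and $v$ exist in every length above the shortest such walk, obtained by inserting back-and-forth moves along an incident edge, so the claim amounts to bounding the shortest walk of parity $(j{+}1)\bmod 2$ from $u$ to $v$ by $j-1$. As that shortest walk is at most $j+1$ and of the correct parity, the only case to exclude is its being exactly $j+1$, and here I would use that each $SO(3)_{2m}$ graph of Figure \ref{fig-SO3_nimrep_graphs} is connected and carries self-loops at all its interior vertices (together with the triangle at the fork of $\mathcal{A}_{2m}$, $\mathcal{E}_8$ and $\mathcal{E}_{14}^c$): for any $u,v$ there is a self-loop vertex $w$ with $d_{u,w}+d_{w,v}\le d_{\mathcal{G}}$, so routing $u\to w\to\cdots\to w\to v$ through the loop realises every length $\ge d_{\mathcal{G}}$ and in particular fixes parity within the budget $j-1$. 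The main obstacle is precisely this last combinatorial verification: the threshold $d_{\mathcal{G}}$ (rather than $2d_{\mathcal{G}}$) is genuinely unavailable for general graphs, so one must check from the explicit list of nimrep graphs that a self-loop always lies within combined distance $d_{\mathcal{G}}$ of any pair of vertices, ensuring the wrong-parity distance never exceeds $d_{\mathcal{G}}+1$.
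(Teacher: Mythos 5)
Your proposal is correct and takes essentially the same route as the paper: the identical computation collapsing $a e_j b$ to a scalar multiple $\delta_{\sigma,\sigma'}\,\sqrt{\phi_{r(\beta)}\phi_{r(\gamma)}}\,\phi_{s(\beta)}^{-1}\,(\alpha_1\cdot\tilde{\beta},\delta_2\cdot\tilde{\gamma})$ of a single matrix unit, followed by realising an arbitrary matrix unit $(\mu,\nu)$ by choosing a common path $\sigma$ of length $j-1$ from $s(\mu)$ to $r(\mu)$. The paper's proof stops there, asserting the existence of such a $\sigma$ directly from $j-1\ge d_{\mathcal{G}}$, whereas your parity/self-loop argument spells out exactly the walk-length verification the paper leaves implicit, and that verification is sound for the $SO(3)_{2m}$ nimrep graphs.
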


\begin{proof}
Let $a=(a_1 \cdot a_1', a_2 \cdot a_2')$, $b=(b_1 \cdot b_1', b_2 \cdot b_2')$ where $|a_i|=|b_i|=j-1$, $|a_i'|=|b_i'|=1$. Embedding $a$, $b$ in $A_{j+1}$ we have $a=\sum_{\tau_a} (a_1 \cdot a_1' \cdot \tau_a, a_2 \cdot a_2' \cdot \tau_a)$, $b=\sum_{\tau_b} (b_1 \cdot b_1' \cdot \tau_b, b_2 \cdot b_2' \cdot \tau_b)$, where $|\tau_i|=1$. Then
\begin{align}
a e_j b &= \sum_{\sigma,\beta,\gamma,\tau_i} \frac{\sqrt{\phi_{r(\beta)}\phi_{r(\gamma)}}}{\phi_{s(\beta)}} \, (a_1 \cdot a_1' \cdot \tau_a, a_2 \cdot a_2' \cdot \tau_a) \cdot (\sigma \cdot \beta \cdot \tilde{\beta}, \sigma \cdot \gamma \cdot \tilde{\gamma}) \cdot (b_1 \cdot b_1' \cdot \tau_b, b_2 \cdot b_2' \cdot \tau_b) \nonumber \\
&= \delta_{a_2,b_1} \, \frac{\sqrt{\phi_{r(a_2')}\phi_{r(b_1')}}}{\phi_{s(a_2')}} \, (a_1 \cdot a_1' \cdot \tilde{a_2'}, b_2 \cdot b_2' \cdot \tilde{b_1'}). \label{eqn:aE_ib}
\end{align}
Since $\phi_v \neq 0$ for all vertices $v$, providing $a_2$, $b_1$ are paths whose length is at least the depth $d_{\mathcal{G}}$ of the graph $\mathcal{G}$, i.e. $j > d_{\mathcal{G}}$, any element in $A_{j+1}$ will be a linear combination of elements of the form (\ref{eqn:aE_ib}).
\end{proof}

\subsection{$\mathcal{A}_{2m}$ subfactor} \label{sect:A2m-subfactor}

Let $M_0 \subset M_1 \subset M_2 \subset \cdots$ be the path algebra of $\mathcal{A}_{2m}$ where we now restrict to pairs of paths which begin only at the distinguished vertex $\ast_{\mathcal{A}}$ of $\mathcal{A}_{2m}$ (labelled 1 in Figure \ref{fig-SO3_nimrep_graphs}), so that $M_0 = \mathbb{C}$ and the Bratteli diagram for the inclusion $M_{l-1} \subset M_l$ is given by the bipartite unfolding of the graph $\mathcal{A}_{2m}$.

\begin{Lemma} \label{Lemma:algAk=algEU}
For the graph $\mathcal{A}_{2m}$, the finite dimensional algebra $M_{l+1}$ is generated by
$$\begin{cases}
1,e_j,u_j, \quad j=1,\ldots,l, & \text{ for } l < m-1, \\
1,e_j,u_j,F(\mathfrak{t}), \quad j=1,\ldots,l, & \text{ for } l \geq m-1
\end{cases}$$
\end{Lemma}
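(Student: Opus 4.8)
The plan is to prove the equivalent all-sources statement, that the finite-dimensional algebra $A_{l+1}$ is generated by $1$, $e_j$, $u_j$ for $j=1,\dots,l$ (together with $F(\frak{t})$ once $l\geq m-1$), and then recover $M_{l+1}$ from it. Writing $e_{\ast}=\sum_{\alpha:\,s(\alpha)=\ast_{\mathcal{A}}}(\alpha,\alpha)$, the multiplication rule $(\alpha_1,\alpha_2)(\alpha_1',\alpha_2')=\delta_{\alpha_2,\alpha_1'}(\alpha_1,\alpha_2')$ preserves source vertices, so $e_{\ast}$ is a central idempotent of $A_{l+1}$ with $e_{\ast}A_{l+1}=M_{l+1}$ and $e_{\ast}=1_{M_{l+1}}$. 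Since $p\cdot\mathrm{alg}(g_i)=\mathrm{alg}_{p}(pg_i)$ for a central idempotent $p$, a generating set $\{1,e_j,u_j,F(\frak{t})\}$ for $A_{l+1}$ yields the generating set $\{e_{\ast},e_{\ast}e_j,e_{\ast}u_j,e_{\ast}F(\frak{t})\}$ for $M_{l+1}$, which is exactly the list in the statement. Thus it suffices to work in $A_{l+1}$.

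I would argue by induction on $l$, splitting at the diameter $d_{\mathcal{A}_{2m}}=m$ (the geodesic from $\ast_{\mathcal{A}}$ to either fork vertex $m_{\pm}$). For $l>m$ the induction step is immediate from Lemma~\ref{Lemma:aE_jb}: every element of $A_{l+1}$ is a linear combination of products $a\,e_l\,b$ with $a,b\in A_l$, and by the inductive hypothesis $A_l$ is generated by $1,e_j,u_j,F(\frak{t})$ for $j<l$, all of which remain among the generators of $A_{l+1}$ under the standard embedding $A_l\hookrightarrow A_{l+1}$; hence each $a\,e_l\,b$ lies in the generated subalgebra. This disposes of all large levels using the single extra operator $e_l$.

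The genuine content is the finite range $1\leq l\leq m$, where not all vertices of $\mathcal{A}_{2m}$ are yet saturated and Lemma~\ref{Lemma:aE_jb} does not apply. Here I would use the isomorphism $A_{l+1}\cong F(\mathcal{W}_{l+1})$ and reduce to showing that the diagram algebra $\mathcal{W}_{l+1}$ is generated by $1$, $E_j$, $U_j$ and (when $l+1\geq m$) the $t$-box. For diagrams without a $t$-box this is the reduction algorithm of the preceding Proposition on the surjectivity of $\eta$: relation~\eqref{eqn:SO3-relation4} removes elliptic faces, after which every diagram becomes a word in $1,E_i,U_i$; the self-loops of $\mathcal{A}_{2m}$ together with the nonzero trivalent cells of Theorem~\ref{thm:cell_system-A2m} are what let $u_j$, via its explicit form~\eqref{eqn:U_j}, produce the non-backtracking matrix units that $e_j$ alone cannot reach. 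A diagram carrying a single $t$-box (possible only for $l+1\geq m$) is brought by isotopy and the $E_j$-action to a standard form $X\,F(\frak{t})\,Y$ with $X,Y$ words in $E_i,U_i$, using that $F(\frak{t})$ is the unique operator satisfying~\eqref{eqn:SO3_Hecke-relations-S}.

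I expect the crux to be the fork levels $l=m-1$ and $l=m$, where the two sheets $m_{+}$ and $m_{-}$ first appear. Because $\phi_{m_{+}}=\phi_{m_{-}}$ and the branches are symmetric, the $SO(3)$-Temperley-Lieb operators $e_j,u_j$ cannot by themselves separate the matrix blocks at $m_{+}$ and $m_{-}$, nor generate the off-diagonal units among the $(m-1)$-, $m_{+}$- and $m_{-}$-branches; it is precisely $F(\frak{t})$ — reflecting the splitting $Q_{\pm}=\tfrac12(f_m\pm t)$ — that supplies these, which is also why $F(\frak{t})$ enters the generating set exactly at $l\geq m-1$. Verifying this carefully, through the cells $W_{m-1,m_{\pm},m_{\mp}}$ and the defining relation~\eqref{eqn:SO3_Hecke-relations-S}, is the main obstacle; the remainder is bookkeeping of matrix units through the standard embedding $A_l\hookrightarrow A_{l+1}$.
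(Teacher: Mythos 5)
Your opening reduction is where the proposal breaks down, and it cannot be repaired in the form stated, because the ``all-sources'' statement you reduce to is false. The cut-down direction of your argument is fine: $e_{\ast}$ is a central idempotent and $e_{\ast}A_{l+1}=M_{l+1}$, so generators of $A_{l+1}$ would cut down to generators of $M_{l+1}$. But $A_{l+1}$ is \emph{not} generated by $1,e_j,u_j$ (and $F(\frak{t})$). Take $l=1$ and $m\geq 3$: by \eqref{eqn:SO3_Hecke-relations-1} we have $e_1^2=[3]e_1$, $u_1^2=u_1$ and $e_1u_1=u_1e_1=0$, so the unital algebra generated by $e_1,u_1$ is $\mathrm{span}\{1,e_1,u_1\}$, which is $3$-dimensional; yet $A_2$ contains the full $3\times 3$ matrix block spanned by the pairs of the three length-$2$ paths $2\to1\to2$, $2\to2\to2$, $2\to3\to2$, so $\dim A_2\geq 9$. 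Generation by the $SO(3)$-Temperley--Lieb operators is genuinely a property of the corner $M_{l+1}=e_{\ast}A_{l+1}$, where those paths have no counterpart; it is the cut-down by $e_{\ast}$ that makes the statement true, so ``it suffices to work in $A_{l+1}$'' is backwards. For the same reason, the identification $A_{l+1}\cong F(\mathcal{W}_{l+1})$ on which your middle step relies cannot hold literally (despite the paper's loose remark to that effect): $F(\mathcal{W}_{l+1})$ is a quotient of $\mathcal{W}_{l+1}=\mathrm{End}_{\mathcal{C}^m}(\rho_1^{\otimes(l+1)})$, whose dimension is $\sum_{\lambda}\bigl(\dim\mathrm{Hom}(\rho_1^{l+1},\lambda)\bigr)^2=\dim M_{l+1}<\dim A_{l+1}$. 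What is true — and is essentially the content of the lemma itself — is that the cut-down of $F(\mathcal{W}_{l+1})$ at the vertex $\ast_{\mathcal{A}}$ is all of $M_{l+1}$; assuming that identification as an input makes your reduction circular.

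Beyond this, the part you correctly identify as the crux — levels $l\leq m$, and in particular why $F(\frak{t})$ must enter exactly at $l=m-1$ — is left as ``the main obstacle'' rather than proved, and that is precisely where all the work in the paper's proof lies. The paper argues by induction directly inside $M_{l+1}$: equation \eqref{eqn:aE_ib} produces every matrix unit $(\xi_1,\xi_2)$ with $r(\xi_i)\leq l$; equation \eqref{eqn:aU_ib}, evaluated with the explicit $\mathcal{A}_{2m}$ cells of Theorem~\ref{thm:cell_system-A2m}, produces the units with range $l+1$; the one remaining diagonal unit at range $l+2$ is the complement of all the others inside the identity and equals $F(\frak{f}_{l+1})$ — except at $l=m-1$, where this complement has rank two and splits into the two rank-one projections $\frac{1}{2}(F(\frak{f}_m)\pm F(\frak{t}))$ at $p_{\pm}$, which is exactly why $F(\frak{t})$ joins the generating set there; and at $l=m$ the units whose paths both end with the edge from $m$ to $p_+$ are not in the span of $a$, $ae_mb$, $au_mb$, but arise as products $(\xi_1,\nu)\cdot(\nu,\xi_2)$ of two elements of that span. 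Your treatment of $l>m$ via Lemma~\ref{Lemma:aE_jb} is sound once it is run inside $M_{l+1}$ rather than $A_{l+1}$, but without the matrix-unit analysis at levels $l\leq m$ the induction has no base, so the proposal does not yet constitute a proof.
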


\begin{proof}
The statement is trivially true for $l=1$ since $M_1 \cong \bbC$. For $l=2$ we have $e_1=[3]([121],[121])$, $u_1=([122],[122])$ and $F(\mathfrak{f}_2) = 1-[3]^{-1}e_1-u_1 = ([123],[123])$.
We will show by induction that any element of $M_{l+1}$, $2 \leq l < m$, is a linear combination of elements of the form $ae_lb$, $au_lb$ and $a$, where $a,b \in M_l$. We will denote the span of all such elements by $B_l$.
Let $a,b \in M_l$ be as in the proof of Lemma \ref{Lemma:aE_jb}. From the expression for $ae_j b$ given in equation (\ref{eqn:aE_ib}) we see that for $l \leq m$ ($=d_{\mathcal{A}_{2m}}$), any matrix units $(\xi_1,\xi_2)$ in $M_{l+1}$ for which $r(\xi_i) \leq l$ are given by an element of the form $ae_lb$ for suitable $a,b \in M_l$. It remains to show that matrix units $(\xi_1,\xi_2)$ belong to $B_l$ when
$$r(\xi_i)=\begin{cases}
l+1, l+2, & l \leq m-2, \\
m, p_{\pm}, & l=m-1, \\
p_{\pm}, & l=m.
\end{cases}$$
Note that since $|\xi_i|=l+1$, for the final edge $\eta_i$ of $\xi_i$ we have $s(\eta_i) \leq l+1$ for $l \leq m-1$ (for $l \geq m =d_{\mathcal{A}_{2m}}$ $s(\eta_i)$ can be any vertex of $\mathcal{A}_{2m}$). Thus, for example, for $l \leq m-2$, the paths $\xi_i$ that we need to consider are those whose final edge $\eta_i$ satisfies $(s(\eta_i),r(\eta_i)) \in \{ (l,l+1), (l+1,l+1), (l+1,l+2) \}$.
Now
\begin{align}
a u_l b &= \sum_{\sigma,\zeta_i,\eta_i,\lambda,\tau_i} \frac{W(\zeta_1,\zeta_2,\lambda)\overline{W(\eta_1,\eta_2,\lambda)}}{\phi_{s(\zeta_1)}\phi_{r(\zeta_2)}} \, (a_1 \cdot a_1' \cdot \tau_a, a_2 \cdot a_2' \cdot \tau_a) \cdot (\sigma \cdot \zeta_1 \cdot \zeta_1, \sigma \cdot \eta_1 \cdot \eta_2) \nonumber \\
&\hspace{90mm} \cdot (b_1 \cdot b_1' \cdot \tau_b, b_2 \cdot b_2' \cdot \tau_b) \nonumber \\
&= \delta_{a_2,b_1} \sum_{\zeta,\eta} \frac{W(a_2',\zeta,\lambda)\overline{W(b_1',\eta,\lambda)}}{\phi_{s(a_2')}\phi_{r(\zeta)}} \, (a_1 \cdot a_1' \cdot \zeta, b_2 \cdot b_2' \cdot \eta), \label{eqn:aU_ib}
\end{align}
where $\lambda$ is uniquely determined by $\zeta$, $\eta$ in each summand.

For $l \leq m-2$, by an appropriate choice of $a,b \in M_l$ where $a_2=b_1$ and $s(a_2')=s(b_1')=m$ we have that (\ref{eqn:aU_ib}) yields
$$\Omega_l (a_1 \cdot a_1' \cdot \zeta_l, b_2 \cdot b_2' \cdot \eta_l) + \Omega_{l+1} (a_1 \cdot a_1' \cdot \zeta_{l+1}, b_2 \cdot b_2' \cdot \eta_{l+1})$$
where $r(\zeta_j)=r(\eta_j)=j$, $\Omega_j = \phi_{s(a_2')}^{-1}\phi_{r(\zeta_j)}^{-1}W(a_2',\zeta_j,\lambda_j)\overline{W(b_1',\eta_j,\lambda_j)}$ and $\lambda_j$ is the unique edge with $s(\lambda_j)=j$, $r(\lambda_j)=s(a_2')$. The first term above is in $B_l$, thus we can obtain that any matrix unit $(\xi_1,\xi_2) \in B_l$ where $r(\xi_i)=l+1$. For the final matrix unit $(\nu,\nu)$ where $r(\nu)=l+2$, this is easily obtained by taking the identity element in $M_{l+1}$, and subtracting all the matrix units $(\xi,\xi)$ where $r(\xi) \leq l+1$, which are all elements in $B_l$. Since $(\nu,\nu)$ is the projection orthogonal to all these other matrix units, it is in fact given by the $SO(3)_{2m}$ Jones-Wenzl projection $F(\mathfrak{f}_{l+1})$.
The case where $l=m-1$ is similar, except that the matrix units $(\nu_+,\nu_+)$, $(\nu_-,\nu_-)$, where $r(\nu_{\pm})=p_{\pm}$, are now given by the $SO(3)_{2m}$ Jones-Wenzl projections $\frac{1}{2} (F(\mathfrak{f}_m) \pm F(\mathfrak{t}))$.

Finally we consider the case $l=m$. Denote by $j_a:=r(a_1')=r(a_2')$, $j_b:=r(b_1')=r(b_2')$. We consider matrix units $(\xi_1,\xi_2) \in M_{m+1}$ where $r(\xi_i)=p_-$, the case for $r(\xi_i)=p_+$ is similar. Choosing $a,b$ such that $j_a,j_b \in \{m,p_+\}$ and $s(a_2')=s(b_1')=m$, equation (\ref{eqn:aU_ib}) yields (up to a scalar factor)
$$\sum_{s \in \{ p_{\pm} \}} W_{j,j_a,s} \overline{W_{j,j_b,s}} (a_1 \cdot a_1' \cdot \zeta_s, b_2 \cdot b_2' \cdot \eta_s) + \text{ terms in } B_m.$$
When $j_a=p_+$ or $j_b=p_+$, the sum has only one term, $W_{j,j_a,p_-} \overline{W_{j,j_b,p_-}} (a_1 \cdot a_1' \cdot \zeta_{p_-}, b_2 \cdot b_2' \cdot \eta_{p_-})$, which is therefore in $B_{m+1}$. The remaining matrix units to consider are $(\xi_1,\xi_2) \in M_{m+1}$ where the final edge $\eta_i$ in the path $\xi_i$ is the edge from $m$ to $p_+$ for both $i=1,2$. In this case $(\xi_1,\xi_2) \not\in B_{m+1}$, but is instead given by a product $(\xi_1,\nu)\cdot(\nu,\xi_2)$, where $\nu$ is a path whose final edge $\eta$ is the edge from $p_-$ to $p_+$ and thus $(\xi_1,\nu),(\nu,\xi_2) \in B_{m+1}$.
\end{proof}

Define $M^s_l := M_{l+s}$, $s \geq 0$, where $M^0_l := M_l$, and let $M^s := \bigvee_l M^s_l$.
We have periodic sequences $\{ M^s_l \subset M^{s+1}_l \}_l$ of commuting squares of period 1 in the sense of Wenzl in \cite{wenzl:1988}, where having period 1 means that, for $l$ large enough, the Bratteli diagrams for the inclusions $M^s_l \subset M^s_{l+1}$ are the same, and the Bratteli diagrams for the inclusions $M^s_l \subset M^{s+1}_l$ are the same.
Note that $M^{s-1}_l \subset M^s_l \subset M^{s+1}_l$ are basic constructions for large enough $l$ \cite[Lemma 11.8]{evans/kawahigashi:1998}, and hence $M \subset M^1 \subset M^2 \subset \cdots \;$ is the Jones tower of $M \subset M^1$.
By Lemma \ref{Lemma:algAk=algEU} $M^s_l$ is generated by $1,e_j,u_j$ ($j=1,\ldots,l+s-1$) and $F(\mathfrak{t})$ (for $l+s \geq m$).
Consider an element $(\sigma_1 \cdot \alpha_1 \cdot \alpha_1', \sigma_2 \cdot \alpha_2 \cdot \alpha_2') \in M^s_l$, where $|\sigma_i| = s+l-2$ and $|\alpha_i| = |\alpha_i'| = 1$ such that $(\sigma_1, \sigma_2) \in M^{s-1}_{l-1}$ and $(\sigma_1 \cdot \alpha_1, \sigma_2 \cdot \alpha_2) \in M^{s-1}_l$. The change of basis $(\sigma_1 \cdot \alpha_1 \cdot \alpha_1', \sigma_2 \cdot \alpha_2 \cdot \alpha_2') \mapsto \sum_{\beta_i,\beta_i'} X^{\alpha_1, \alpha_1'}_{\beta_1, \beta_1'} \overline{X^{\alpha_2, \alpha_2'}_{\beta_2, \beta_2'}} (\sigma_1 \cdot \beta_1 \cdot \beta_1', \sigma_2 \cdot \beta_2 \cdot \beta_2')$ is given by a connection $X=(X^{\alpha_1, \alpha_1'}_{\beta_1, \beta_1'})^{\alpha_1, \alpha_1'}_{\beta_1, \beta_1'}$. We may take $X$ to be given by applying the functor $F$ to the crossing \eqref{eqn:braiding-SO3}, and since the operators $e_j,u_j$ and $F(\mathfrak{t})$ are the images under $F$ of diagrams in $\mathcal{C}^m$, they do not change their form by changes of basis. Thus the connection $X$ is flat
and, by Ocneanu's compactness argument \cite{ocneanu:1991}, the higher relative commutants of the subfactor $M \subset M^1$  are given by $M' \cap M^l = M_l$.
Thus the subfactor $M \subset M^1$ has principal graph given by the bipartite unfolding of $\mathcal{A}_{2m}$. Subfactors with these principal graphs appear in \cite[Fig. 15]{izumi:1991}.

\subsection{$SO(3)$ Goodman-de la Harpe-Jones subfactors} \label{sect:GHJ}

Previous methods used to construct the Goodman-de la Harpe-Jones subfactors for the $E_7$ and $D_{\text{odd}}$ Dynkin diagrams in \cite[Lemma A.1]{bockenhauer/evans/kawahigashi:2000} and the $SU(3)$-analogue in \cite{evans/pugh:2009ii} provide braided subfactors which realise the $SO(3)$ modular invariants classified in Section \ref{sect:mod-inv-nimreps}.

The first row of the intertwining matrix $V$ for these subfactors, which is a matrix whose rows are indexed by the vertices of $\mathcal{G}$ and columns are indexed by the vertices of $\mathcal{A}_{2m}$, such that $V \Delta_{\mathcal{A}} = \Delta_{\mathcal{G}} V$, yields the algebra object $A$ for each module category. For each $SO(3)_{2m}$ graph we have the following algebra objects:
\begin{center}
\begin{tabular}{|c|c|}
\hline nimrep $\mathcal{G}$  & algebra object $A$ \\
\hline
\hline  $\mathcal{A}_{2m}$    & $f_0$         \\
\hline  $\sigma_{2m}$         & $f_0 \oplus f_1$       \\
\hline  $\mathcal{E}_8$       & $f_0 \oplus Q_+$ or $f_0 \oplus Q_-$           \\
\hline  $\mathcal{E}_8^c$     & $f_0 \oplus f_3$           \\
\hline  $\mathcal{E}_{14}$    & $f_0 \oplus f_5$          \\
\hline  $\mathcal{E}_{14}^c$  & $f_0 \oplus f_1 \oplus f_4 \oplus 2 f_5 \oplus f_6$       \\
\hline
\end{tabular}
\end{center}
For level $8$, these recover the algebra objects found in \cite{edie-michell:2017}. Each algebra object has a unique algebra object structure, apart from $f_0 \oplus f_3$, corresponing to $\mathcal{E}_8^c$ for which there are two algebra object structures, as found in \cite{edie-michell:2017}.

\section{An $SO(3)$ generalised preprojective algebra} \label{sect:SO3_preprojective}

Any module category for the Temperley-Lieb categories $TL(\delta)$ or $TL^{(4m)}$ defined in Section \ref{sect:TL-cat} yields a preprojective algebra $\Pi$ via $\Pi = \mathbb{C}\mathcal{G}/\langle \mathrm{Im} \left( F \left( \includegraphics[width=5mm]{fig-creation} \right) \right) \rangle$, where $F$ is the tensor functor defined as in \eqref{eqn:functorF} and $\langle \mathrm{Im} \left( F \left( \includegraphics[width=5mm]{fig-creation} \right) \right) \rangle \subset \mathbb{C}\mathcal{G}$ is the two-sided ideal generated by the image of the creation operators $\includegraphics[width=5mm]{fig-creation}$ in $\mathbb{C}\mathcal{G}$. It is a graded algebra, with $p^{\mathrm{th}}$ graded part $\Pi_p = (\mathbb{C}\mathcal{G})_p/\langle \mathrm{Im} \left( F \left( \includegraphics[width=5mm]{fig-creation} \right) \right) \rangle_p$, where $\langle \mathrm{Im} \left( F \left( \includegraphics[width=5mm]{fig-creation} \right) \right) \rangle_p$ is the restriction of $\langle \mathrm{Im} \left( F \left( \includegraphics[width=5mm]{fig-creation} \right) \right) \rangle$ to $(\mathbb{C}\mathcal{G})_p$ and is equal to $\sum_{i=1}^{p-1}\mathrm{Im}(e_i)$, the linear span of the images in $\mathbb{C}\mathcal{G}_p$ of the Temperley-Lieb elements $e_i$.

These preprojective algebras have an alternative description as \cite{cooper:2007}
$$\Sigma = \bigoplus_{j=0}^{\infty} F(\rho^{(j)}),$$
in the generic case, and in the non-generic case
$$\Sigma = \bigoplus_{j=0}^{4m} F(\rho^{(j)}),$$
where the Jones-Wenzl projections $\rho^{(j)}$ are the simple objects in $TL(\delta)$, $TL^{(4m)}$ respectively. Multiplication $\mu$ is defined by $\mu_{p,l} = F(\mathfrak{f}^{(p+l)}): \Sigma_p \otimes_R \Sigma_l \rightarrow \Sigma_{p+l}$.
In either case the preprojective algebra is given by applying $F$ to the direct sum of all simple objects in the tensor category. The module categories for $TL^{(4m)}$ yield precisely the finite dimensional preprojective algebras. They are in fact Frobenius algebras, that is, there is a linear function $\zeta:\Sigma \rightarrow \mathbb{C}$ such that $(x,y):=\zeta(xy)$ is a non-degenerate bilinear form (this is equivalent to the statement that $\Sigma$ is isomorphic to its dual $\Sigma^{\ast} = \mathrm{Hom}(\Sigma,\mathbb{C})$ as left (or right) $\Sigma$-modules). There is an automorphism $\beta$ of $\Sigma$, called the Nakayama automorphism of $\Sigma$ (associated to $\zeta$), such that $(x,y) = (y,\beta(x))$. Then there is an $\Sigma$-$\Sigma$ bimodule isomorphism $\Sigma^{\ast} \rightarrow {}_1 \Sigma_{\beta}$ \cite{yamagata:1996}, where the $\Sigma$-$\Sigma$ bimodule ${}_1 \Sigma_{\beta}$ is identified with $\Sigma$ as a vector space, but with the right action of $\Sigma$ twisted by the automorphism $\beta$.

In this section we will construct a generalisation of the preprojective algebra related to $SO(3)$. Let $\mathcal{M}$ be an $SO(3)_{2m}$ module category with nimrep graph $\mathcal{G}$, and denote by $F$ the corresponding module functor.
Analogous to the case of $SU(2)$ and $SU(3)$ \cite{evans/pugh:2010ii}, we define $\Pi^{SO(3)}$ to be the graded algebra with $p^{\mathrm{th}}$ graded part $\Pi^{SO(3)}_p = (\mathbb{C}\mathcal{G})_p/J_p$, where $J_p = \sum_{i=1}^{p-1}\mathrm{Im}(e_i) + \mathrm{Im}(u_i)$, the linear span of the images in $\mathbb{C}\mathcal{G}_p$ of the $SO(3)$-Temperley-Lieb elements $e_i$, $u_i$.
The algebra $\Pi^{SO(3)}$ is isomorphic to the graded algebra
$$A := \bigoplus_{j=0}^{2m} F(\rho_{j}),$$
where $F(\rho_{m+j}) \cong F(\rho_{m-j})$, $j=1,2,\ldots,m$ (c.f. Figure \ref{fig-isomorphism-fk-j_fk+j}), with $F(\rho_{m-j}) \subset (\mathbb{C}\mathcal{G})_{m-j}$ and $F(\rho_{m+j}) \subset (\mathbb{C}\mathcal{G})_{m+j}$.

The even part $TL^{(4m)}_{\mathrm{even}}$ of $TL^{(4m)}$ has simple objects $\rho^{(0)}, \rho^{(2)}, \ldots, \rho^{(4m)}$.
Since the arguments of Sections \ref{sect:mod-inv-nimreps}, \ref{sect:cell-systems} (apart from Section \ref{sect:T-cell-system}) did not use any $t$-boxes, the same arguments show that the $\ast$-module categories for $TL^{(4m)}_{\mathrm{even}}$ are completely classified by a nimrep, which is one of $\mathcal{H}^{e}$, $\mathcal{H}^{o}$, the even, odd part respectively of the Dynkin diagram $\mathcal{H} \in \{ D_{2m+2}, E_7, E_8\}$.
Now $A = \bigoplus_{j=0}^{2m} F(\varphi(\rho^{(2j)})) = \bigoplus_{j=0}^{2m} F'(\rho^{(2j)})$, where $F' = F \circ \varphi$ is a tensor functor from the even part $TL^{(4m)}_{\mathrm{even}}$ of $TL^{(4m)}$ to $\text{Fun}(\mathcal{M},\mathcal{M})$, with nimrep either $\mathcal{H}^e$ or $\mathcal{H}^o$, as in Theorem \ref{Thm:classification}.
Let $F^e$, $F^o$ be $TL^{(4m)}_{\mathrm{even}}$ module functors with nimreps $\mathcal{H}^e$, $\mathcal{H}^o$ respectively, and $\tilde{F}$ a $TL^{(4m)}$ module functor with nimrep $\mathcal{H}$. Then $\bigoplus_{j=0}^{2m} F^e(\rho^{(2j)}) \oplus \bigoplus_{j=0}^{2m} F^o(\rho^{(2j)}) \cong \bigoplus_{j=0}^{2m} \tilde{F}(\rho^{(2j)})$, the even-graded part of the preprojective algebra $\Sigma$ for $\mathcal{H}$.
Thus the Nakayama automorphism of $A$ for $\mathcal{H}^{e}$ or $\mathcal{H}^{o}$ is given by the Nakayama automorphism of $\Sigma$ for $\mathcal{H}$. The graphs $D_{2m+2}, E_7, E_8$ are precisely those for which the Nakayama automorphism of $\Sigma$ is trivial. Thus the Nakayama automorphism for $A$ is trivial for all $SO(3)_{2m}$ graphs.

\begin{Rem}
Note that the definition of preprojective algebra $\Pi$ used here is equivalent to the original one of \cite{gelfand/ponomarev:1979}, and uses the unoriented graph $\mathcal{G}$. Here $\Pi$ is the quotient of $\bbC \mathcal{G}$ by the ideal generated by the quadratic relations (one for each vertex $v$ of $\mathcal{G}$) $\sum_a a \tilde{a}$, where the sum is over all edges $a$ of $\mathcal{G}$ such that $s(a)=v$. A common definition in the literature uses instead a quiver $\mathcal{Q}$ obtained by choosing an orientation on $\mathcal{G}$, and use the quadratic relations $\sum_a \epsilon_a a \tilde{a}$, where the sum is again over all edges $a$ of $\mathcal{G}$ such that $s(a)=v$, and where $\epsilon_a$ is 1 if the edge $a$ is in $\mathcal{Q}$ and $-1$ if the edge $\tilde{a}$ is in $\mathcal{Q}$. Both definitions yield isomorphic algebras \cite{malkin/ostrik/vybornov:2006}. In the quiver definition, the Nakayama automorphism is never trivial, although the underlying permutation is always trivial for the graphs $D_{2m+2}, E_7, E_8$.
\end{Rem}

In the case of $SU(2)$ and $SU(3)$ the fusion rules yield a finite projective resolution of the corresponding (generalised) preprojective algebra. However, this does not happen for $SO(3)$:
From the fusion rules \eqref{eqn:fusion_rules-f1} for $\mathcal{C}^m$ we obtain the following sequence in $\mathcal{C}^m$ which is split exact by construction:
\begin{equation*} \label{eqn:exact_seq-f}
0 \to \rho_{j-1} \oplus \rho_j \stackrel{\psi_1^{\ast}+\psi_2^{\ast}}{\to} \rho_j \otimes \rho_1 \stackrel{\psi_3}{\to} \rho_{j+1} \to 0,
\end{equation*}
for $j=1,2,\ldots,2m-1$. For $j=0, 2m$, we have
\begin{equation*} \label{eqn:exact_seq-f-2}
0 \to \rho_0 \otimes \rho_1 \stackrel{\psi_3}{\to} \rho_{1} \to 0, \qquad\qquad 0 \to \rho_{2m-1} \stackrel{\psi_1^{\ast}}{\to} \rho_{2m} \otimes \rho_1 \to 0.
\end{equation*}
Summing over all $j=0,1,2,\ldots,2m$, we have the exact sequence
\begin{equation*}
0 \to \bigoplus_{j=0}^{2m-1} \rho_j[2] \oplus \bigoplus_{j=1}^{2m-1} \rho_j[1] \to \bigoplus_{j=0}^{2m} \rho_j \otimes \rho_1 \to \bigoplus_{j=1}^{2m} \rho_j \to 0,
\end{equation*}
where $B[m]$ denotes the space $B$ with grading shifted by $m$. From this we obtain the exact sequence
\begin{equation} \label{eqn:exact_seq-f-sum}
0 \to \bigoplus_{j=0}^{2m-1} \rho_j[2] \oplus \bigoplus_{j=1}^{2m} \rho_j[1] \to \left( \bigoplus_{j=0}^{2m} \rho_j \otimes \rho_1 \right) \oplus \rho_{2m}[1] \to \bigoplus_{j=0}^{2m} \rho_j \to \rho_0 \to 0.
\end{equation}
Similarly, by considering the exact sequence
\begin{equation*} \label{eqn:exact_seq-f-3}
0 \to \rho_{j-1} \stackrel{\psi_1^{\ast}}{\to} \rho_j \otimes \rho_1 \stackrel{\psi_2 \oplus \psi_3}{\to} \rho_j \oplus \rho_{j+1} \to 0,
\end{equation*}
we obtain the following exact sequence
\begin{equation} \label{eqn:exact_seq-f-sum-2}
0 \to \rho_{2m}[3] \to \bigoplus_{j=0}^{2m} \rho_j[3] \to \rho_0[2] \oplus \left( \bigoplus_{j=0}^{2m} \rho_j \otimes \rho_1 \right)[1] \to \bigoplus_{j=0}^{2m-1} \rho_j[2] \oplus \bigoplus_{j=1}^{2m} \rho_j[1] \to 0.
\end{equation}
Splicing the sequences \eqref{eqn:exact_seq-f-sum}, \eqref{eqn:exact_seq-f-sum-2} together we obtain the following exact sequence
\begin{align}
0 \to \rho_{2m}[3] \to \bigoplus_{j=0}^{2m} \rho_j[3] \to \rho_0[2] \oplus \left( \bigoplus_{j=0}^{2m} \rho_j \otimes \rho_1 \right)[1] \to \left( \bigoplus_{j=0}^{2m} \rho_j \otimes \rho_1 \right) \oplus \rho_{2m}[1] \nonumber \\
\to \bigoplus_{j=0}^{2m} \rho_j \to \rho_0 \to 0. \label{eqn:exact_seq-f-spliced}
\end{align}
Applying the functor $F$ to \eqref{eqn:exact_seq-f-spliced} we obtain the following finite resolution of $A$ as a left $A$-module
\begin{equation} \label{eqn:resolution-A}
0 \to R[2m+3] \stackrel{\mu_4}{\to} A[3] \stackrel{\mu_3}{\to} R[2] \oplus \left( A \otimes_R V[1] \right) \stackrel{\mu_2}{\to} \left( A \otimes_R V \right) \oplus R[2m+1] \stackrel{\mu_1}{\to} A \stackrel{\mu_0}{\to} R \to 0,
\end{equation}
where $V := (\bbC \mathcal{G})_1$ is the $A$-$A$ bimodule generated by the edges of $\mathcal{G}$.
The connecting maps are given explicitly by
\begin{align*}
\mu_0(x) &= \begin{cases} x, & \text{if } x \in R, \\ 0, & \text{otherwise.} \end{cases} \\
\mu_1((x \otimes a) \oplus 0) &= xa, \text{ for } x \in A_j, j < 2m, \qquad \mu_1((x \otimes a) \oplus v) = xa, \text{ for } x \in A_{2m} \\
\mu_2(v \oplus (s(a) \otimes a)) &= \left( \sum_{b:s(b)=v} \frac{\sqrt{\phi_{r(b)}}}{\sqrt{\phi_v}} b \otimes b' + \frac{1}{\sqrt{\phi_{s(a)} \phi_{r(a)}}} \sum_{b,b'} W(abb') x'a' \tilde{b} \otimes \tilde{b'} \right) \oplus 0, \\
\mu_2(0 \oplus (x'a' \otimes a)) &= \frac{1}{\sqrt{\phi_{s(a)} \phi_{r(a)}}} \left( \sum_{b,b'} W(abb') x'a'\tilde{b} \otimes \tilde{b'} + \sum_{c,c'} \frac{\sqrt{\phi_{r(c')}}}{\sqrt{\phi_{s(c')}}} W(a'ab) x' \tilde{c}c' \otimes \tilde{c'} \right) \\
& \quad \oplus 0, \\
\mu_3(x) &= 0 \oplus \left( \sum_b \frac{\sqrt{\phi_{r(b)}}}{\sqrt{\phi_{s(b)}}} xb \otimes \tilde{b} \right), \\
\mu_4(v) &= v,
\end{align*}
where $x, x' \in A$ with $\mathrm{deg}(x') < 2m$, $a, a'$ are edges of $\mathcal{G}$, the summations are over edges $b,b',c,c'$ of $\mathcal{G}$, and $v$ is a vertex of $\mathcal{G}$.

Let $H_A$ denote the Hilbert series of $A$, given by $H_A(t) = \sum_{p=0}^{\infty} H_{ji}^p t^p$, where the $H_{ji}^p$ are matrices which count the dimension of the subspace $\{ i x j | \; x \in A_p \}$, for $i,j$ vertices of $\mathcal{G}$.
From the exact sequence \eqref{eqn:resolution-A} we obtain the following relation for $H_A$:
$$-I t^{2m+3} + H_A(t)t^3 - \left( I t^2 + \Delta_{\mathcal{G}} H_A(t) t^2 \right) + \left( \Delta_{\mathcal{G}} H_A(t) t + I t^{2m+1} \right) - H_A(t) + I =0$$
which yields the following Hilbert series of the $SO(3)$-preprojective algebra $A$:
\begin{equation*}
H_A(t) = \frac{1-t^2+t^{2m+1}-t^{2m+3}}{1-\Delta_{\mathcal{G}} (t-t^2) - t^3} = \frac{(1+t)(1+t^{2m+1})}{1+(\Delta_{\mathcal{G}}-I)t+t^2}.
\end{equation*}

\end{document}